\newtheorem{theorem}{Theorem}[section]
\newtheorem{lemma}[theorem]{Lemma}
\newtheorem{corollary}[theorem]{Corollary}
\newtheorem{proposition}[theorem]{Proposition}
\theoremstyle{remark}
\newtheorem{remark}[theorem]{Remark}
\theoremstyle{definition}
\newtheorem{example}[theorem]{Example}
\numberwithin{equation}{section}
\newcommand{\norm}[1]{\Vert#1\Vert}
\DeclareMathOperator{\Cdb}{{\mathbb C}}
\DeclareMathOperator{\Rdb}{{\mathbb R}}
\DeclareMathOperator{\Zdb}{{\mathbb Z}}
\DeclareMathOperator{\Ddb}{{\mathbb D}}
\DeclareMathOperator{\Tdb}{{\mathbb T}}
\DeclareMathOperator{\Ndb}{{\mathbb N}}
\begin{document}

\title[Real operator algebras and real positive maps]{Real operator algebras and real positive maps}
\thanks{Supported by a Simons Foundation Collaboration Grant.}
\author{David P. Blecher}
\address{Department of Mathematics, University of Houston, Houston, TX
77204-3008, USA}
\email[David P. Blecher]{dpblecher@central.uh.edu}

\author{Worawit Tepsan}
\address{Chiang Mai University, 
239 Nimmanhaemin Road, Suthep, Muang, 
Chiang Mai, Thailand, 50200}
\email[Worawit Tepsan]{worawit.tepsan@cmu.ac.th}


\subjclass{Primary 17C65 , 46L05,  46L70, 47L05, 47L07, 47L30, 47L70; Secondary: 46H10, 46B40,  46L07, 46L30, 47B92, 47L75}
\keywords{Operator algebra, real Jordan operator algebra, real operator system, real completely bounded maps, real positive maps, approximate identities}

\begin{abstract}  We  present some foundations for a theory of real 
operator algebras  and real Jordan operator algebras, and the various morphisms
between these.  A common theme is the ingredient of {\em real positivity} from 
papers of the first author with Read, Neal, Wang, and other coauthors,  which we import to the real scalar setting here. 
  \end{abstract}

\maketitle

\section{Introduction}  

In a 2016 paper \cite{Ros}, Rosenberg commented ``Over the last thirty years ... it has become apparent that real $C^*$-algebras have a lot of extra structure not evident from their complexifications. At the same time, interest in real $C^*$-algebras has been driven by a number of compelling applications." He goes on to mention 
the classification of manifolds of positive scalar curvature,  representation theory,  the study of orientifold string theories, and the real Baum-Connes conjecture.
Even the problem of whether a complex $C^*$-algebra is the complexification of a real $C^*$-algebra is a difficult one, with contributions 
by Connes \cite{C}, Jones, St{\o}rmer, etc.

On the other hand, 
B\"ottcher and Pietsch  comment in 
\cite{BP}  that the impression sometimes given that results about complex 
Hilbert spaces carry over mutatis mutandis to the real case,  
should not be taken too literally.  They go on to point out that 
searching in 
Mathematical Reviews for publications whose title contains the term ‘Hilbert space’ yields an output of approximately 10000, while asking for ‘real Hilbert space’ reduces the result to
100 (we have changed their numbers to the current ones).   They suggest that this indicator, while 
very rough, nonetheless  
displays the shameful treatment of the real case. 
Similarly there has been
 very little work done on real operator algebras relatively speaking, and there is  a frequent 
 lack of appreciation of the considerable difficulties that can arise.     
 
In the present paper we present some foundations for a theory of real 
operator algebras  and real Jordan operator algebras, and the various morphisms
between these (see also \cite{Sharma,WTT}).  A common theme is the ingredient of {\em real positivity} from 
papers of the first author with Read 
in \cite{BRI,BRII,BRord} (see also e.g.\ \cite{BBS,BSan,BNp,BWj,BNj,BNjp,Bposx})
in the complex case, which we import to the real scalar setting here.    
A bounded linear operator on a Hilbert space $H$ is real positive  if $T + T^* \geq 0$.  Real positivity
is intended to be, for nonselfadjoint operator algebras or unital operator spaces,  a substitute for positivity in $C^*$-algebras.  
One indication that real positivity will be useful in the real theory  is the fact (which follows from e.g.\ Lemma \ref{sfun}), that states on a real 
$C^*$-algebra are the norm 1 real positive functionals, but they are not the norm 1 positive functionals (see the
example above Proposition 4.1 in \cite{ROnr}).  

An associative real (resp.\ complex) {\em operator algebra} is a possibly nonselfadjoint closed real (resp.\ complex) subalgebra of $B(H)$, for a
real (resp.\ complex) Hilbert space $H$.   See e.g.\ \cite{BLM} for the complex case of this theory, and
\cite{Sharma} for a preliminary study of the real case.  By a real (resp.\ complex) {\em  Jordan operator algebra} we
 mean a  norm-closed  real (resp.\ complex) {\em  Jordan subalgebra} $A$ of a real (resp.\ complex) $C^*$-algebra,  
namely a norm-closed   real (resp.\ complex) subspace closed under the 
`Jordan product' $a \circ b = \frac{1}{2}(ab+ba)$. Or equivalently,
 with $a^2 \in A$ for all $a \in A$ (this follows since $a \circ b = \frac{1}{2} ((a+b)^2 -a^2 -b^2)$).    A characterization of these algebras is given in 
\cite[Section 4.3]{WTT} and in Theorem 2.1 in \cite{BNj,BWj}.    
The selfadjoint case, that is,  closed selfadjoint real (resp.\ complex) subspaces of a 
 real (resp.\ complex) $C^*$-algebra which are closed under squares, 
will be called real (resp.\ complex) {\em $JC^*$-algebras}. 
Complex  $JC^*$-algebras have a large literature, see e.g.\ \cite{Rod, HS,Top} for references.     
The theory of (possibly nonselfadjoint) Jordan operator   algebras over the complex field was initiated in \cite{BWj,BNj} 
 (see also \cite{BWj2,ZWthes}  for some additional results, complements, etc).   

Of course every operator algebra is a Jordan operator algebra.
Jordan operator algebras are the most general setting for many of the results below.
Thus we state such results in this setting; 
the reader who does not care about nonassociative algebras should simply
restrict to the associative case.    Indeed the statement of some of our results 
contain the phrase `(Jordan) operator algebra', this invites the reader to simply ignore the word `Jordan'. 
Another reason to consider  real Jordan operator algebras is that 
there are many examples, for example in quantum physics.     In addition to the complex examples there are the {\em real $JC$-algebras}
(cf.\ \cite{HS,Top}), namely Jordan algebras of selfadjoint operators on a real Hilbert space.   
We mention for example spin systems: families of selfadjoint unitaries $\{ u_i : i \in I \}$ on 
a real Hilbert space $H$ with   
$u_i \circ u_j = 0$ if $i \neq j$.   For example the Pauli matrices, and the usual appropriate tensor product of these, are good examples of these.
The closed real span of such a family, and the identity $I$, is a real unital Jordan operator subalgebra  of $B(H)$, which
is isomorphic to a Hilbert space (see e.g.\ p.\ 175 in \cite{Pisbk}).   

We are able to include a rather large number of results 
in a relatively short manuscript since many proofs are similar to their
complex counterparts, and thus we often need only discuss the 
new points that arise.    Note that real (Jordan) operator algebras can be treated 
to some extent in the framework of complex (Jordan) operator algebras with an involution
\cite{BWinv}.   Indeed the study of these real algebras is 
`the same as'  (in some sense--but do not take this too literally) the study of 
complex (Jordan) operator algebras with a period 2 conjugate linear (Jordan) automorphism,
which is isometric or completely isometric.  Indeed the fixed points of such an automorphism is a
real (Jordan) operator algebra, and conversely the complexification of a real (Jordan) operator algebra
has such an automorphism $x + iy \mapsto x - iy$.   Similarly for dual real (Jordan) operator algebras,
except now the automorphism is weak* continuous.     

 Thus often a result about real spaces is proved by applying the complex variant of the result to the complexification.
 We will rarely spend much time on such results, focusing rather on situations where complexification fails. 
  In the latter case one may try to copy the proof of the complex variant, or invent a completely new
argument, or attempt a mixture of these two.   Indeed there are a few pitfalls that one must
beware of, and standard tricks in the complex case that fail for real spaces.   For example, for an operator $T$ on a real Hilbert space $H$
the condition $\langle T\xi,\xi \rangle \geq 0$ for every $\xi \in H$ does not imply that $T \geq 0$.  Indeed
arguments involving states or numerical range often do not work in the real case.   E.g.\ the matrix  $E_{12} - E_{21}$  
takes value $0$ at every real state on $M_2(\Rdb)$, but it is not Hermitian (selfadjoint).     We know of no 
simple test for $T \in B(H)$ to be positive or selfadjoint in the real case.   Thus some aspects and directions in the theory
do not generalize very well to the real case, as we shall see e.g.\ in parts  of Section 5. 

We do not attempt to be exhaustive, rather we are content to prove enough results here so that  the reader who is familiar with 
the complex theory from the literature listed in the third paragraph 
will feel, after finishing our paper, that they have a good grasp of how the real case works out.  

In Section 2 we give some basic results on real completely positive maps on  real operator algebras and real Jordan operator algebras, and  completely positive maps on  real operator systems.  
In Section 3 we establish the variant of Meyer's unitization theorem for contractive  homomorphisms on a 
real Jordan operator algebra. Namely for a real Jordan subalgebra $A$ of $B(H)$ not containing $I_H$, 
any  contractive  Jordan homomorphism from $A$ to $B(K)$ 
extends to a unital contractive homomorphism from $A + \Rdb I_H$ to $B(K)$.   This result does not just follow by complexification--we have to exploit 
the Cayley transform as in Meyer's remarkable  original proof.  
This implies in particular that the unitization of a real Jordan operator algebra is uniquely defined up to 
 isometric algebra  isomorphism.   
 In Section 4 we study contractive approximate identities (or {\em cais}), generalizing  to the real case some of the main results 
 from \cite{BWj} (and from the sequence of papers mentioned above) about  cais and about 
 {\em approximately unital} algebras.    
 
 In Section 5 we study real  positivity and real  positive maps, generalizing  to the real case some of the main results 
 from \cite[Section 2]{BNjp}.  
 Thus Section 5 is the appropriate variant for real Jordan operator algebras and real unital operator spaces, of 
 the theory of positive (but not completely positive maps) on complex $C^*$-algebras or operator systems.  
  It turns out that if we are interested in real  positive maps as opposed to RCP maps (see Section 2)
 then an extra condition is usually needed for the real theory, namely the notion of {\em systematic real  positive}.
 However even then some obstacles emerge that do not exist in the complex case.  We give several examples  of bad behavior.
 For example unital contractions on a real operator system need not be selfadjoint, nor positive, and need not extend
 to a contraction on the unitization.   
 Also real positive maps (resp.\  positive selfadjoint maps) need not extend to a real positive map (resp.\  positive selfadjoint map) on a complexification. 
 We  also list some questions that we do not know the answer to but probably are also evidence of malaise in the real case at this 
 level of generality.    
 
In the remaining part of Section 1 we give some background
and notation.   The underlying scalar field is usually 
$\Rdb$, and all spaces, maps or operators in this paper
are usually $\Rdb$-linear. For background  on completely bounded maps, complex operator spaces, and associative operator algebras, we refer the reader
to \cite{BLM,ER,Pau, Pisbk}.  For complex $C^*$-algebras  the reader could consult e.g.\ 
\cite{P}.     For background  on `selfadjoint' nonassociative and `Jordan type' algebras
see e.g.\ \cite{Rod} and references therein.  For the theory of possibly  nonselfadjoint complex Jordan operator algebras the reader will also want to consult \cite{BWj} frequently
 for background, notation, etc, and will often be referred 
there for various results that are used here.   

The letters $H, K$ are reserved for real Hilbert spaces.
 A {\em projection}  in an algebra 
is always an orthogonal projection (so $p = p^2 = p^*$).    A  (possibly nonassociative) normed algebra $A$  is {\em unital} if it has an identity $1$ of norm $1$, 
and a map $T$ 
is unital if $T(1) = 1$.  We write $X_+$ for the positive operators (in the usual sense) that happen to
belong to $X$.   
A {\em Jordan homomorphism} $T : A \to B$ between Jordan algebras
is  a linear map satisfying $T(ab+ba) = T(a) T(b) + T(b) T(a)$ for $a, b \in A$, or equivalently,
that $T(a^2) = T(a)^2$ for all $a \in A$ (the equivalence follows by applying $T$ to $(a+b)^2$).  We write $M_n(X)$ for the space of $n \times n$ matrices 
over a space $X$.  

Some general background on differences between `real and complex functional 
analysis' may be found in the recent survey \cite{Mos} that was not available at the time of submission of our paper.
 We will assume that the reader is familiar with some basics from the theory of 
real $C^*$-algebras \cite{Li}, in particular that a real $C^*$-algebra $B$ has 
a `unique $C^*$-algebra complexification' such that the map $x + i y \mapsto 
x-iy$ is isometric (here $x,y \in B$).  This unique
complexification may be equivalently characterized
in terms of period 2 conjugate linear $*$-automorphisms on complex $C^*$-algebras. 
We will often use facts from that theory that will be evident for readers
for the complex case, and whose real versions may all be found in \cite{Li}.
 If $X$ is a subspace of a (real or complex) $C^*$-algebra $B$ then we write $C^*(X)$ or $C^*_B(X)$ for the
$C^*$-subalgebra of $B$ generated by $X$.  

Although we shall not use this we mention as a background fact that the first author 
together with Mehrdad Kalantar were 
able to  improve on the characterization of commutative real $W^*$-algebras  from \cite{Li} (see e.g.\ Theorem 6.3.1 
there, and e.g. \cite{IP, ARU2}).     This may be new in this full 
generality (e.g.\ \cite{Li,ARU2} seem to consider separably acting algebras).
Indeed the commutative real $W^*$-algebras  come from the 
commutative complex $W^*$-algebras regarded as real algebras: 

 \begin{theorem}  \label{coisre}   Any  commutative real $W^*$-algebra is (real) $*$-isomorphic 
 to the direct sum of a 
 commutative complex $W^*$-algebra, and the set of selfadjoint elements in a commutative complex $W^*$-algebra.
 This may be written as $L^\infty(X,\mu,\Cdb) \oplus L^\infty(Y,\nu,\Rdb)$.
 Here $(X,\mu)$ and $(Y,\mu)$ are localizable (or Radon) measure spaces 
 (and we allow one of these
 to be the empty set, and in this case interpret the $L^\infty$-space to be $(0)$).
\end{theorem}

Ruan initiated the study of real operator spaces in \cite{ROnr,RComp}, and this study was continued in 
\cite{Sharma}.    A real operator space may either be viewed as a real subspace of $B(H)$, or abstractly as 
a vector space with a norm $\| \cdot \|_n$ on $M_n(X)$ for each $n \in \Ndb$ (satisfying  
{\em Ruan's  axioms}  \cite{ROnr}).    Sometimes the sequence of norms $(\| \cdot \|_n)$ is
called the {\em operator space structure}.  All spaces in the present paper are such operator spaces, although
sometimes we will not care about the higher matrix norms.  
We will say that an operator space complexification $X_c = X + iX$ of an operator space $X$
is {\em completely reasonable} if  the map 
$\iota_X : x+iy \mapsto x - iy$ is a `complete isometry', for $x, y \in X$.  Ruan proved that a real operator space 
$X$ possesses a completely reasonable operator space complexification, which is unique up to complete isometry.
This unique complexification may be identified up to real  complete isometry with the operator subspace of $M_2(X)$ 
of matrices of form 
\begin{equation} \label{ofr} \begin{bmatrix}
       x    & -y \\
       y   & x
    \end{bmatrix}
    \end{equation} 
    for $x, y \in X$.

One may define an {\em operator algebra complexification}
(resp.\  {\em Jordan operator algebra complexification}) of a real operator algebra
(resp.\  Jordan operator algebra) $A$ to be the complex linear span $A + iA$
of $A$ in $B_c$, if $A$ is a subalgebra (resp.\  Jordan subalgebra) of a 
real $C^*$-algebra $B$.   Here $B_c$ is the unique $C^*$-algebra complexification a 
couple of paragraphs above.   
In particular if  $A$ is a real subalgebra (resp.\ Jordan subalgebra)  of $B(H)$ 
 for a real Hilbert space $H$ then 
 the operator algebra (resp.\  Jordan operator algebra) complexification 
 $A + i A$ is a complex subalgebra (resp.\ Jordan subalgebra)  of $B(H)_c = B(H_c)$.
 These definitions have the drawback of being too `concrete', but are easily made 
`more suitable for abstract use' by adding words like `is isometrically 
isomorphic (resp.\  Jordan  isomorphic) to' in a couple of places above.   One uses 
`completely isometrically' here in place of `isometrically'
 if one needs to work in the matrix normed (operator space)
 setting.  
As mentioned much earlier in the introduction, this complexification
can be equivalently defined abstractly in terms
of a period 2 conjugate linear isometric 
(or completely isometric, if one needs to work in the matrix normed setting)
automorphism $\theta$ on a complex operator algebra $B$.
Here $B$ is 
spanned by the subspace of  fixed points of $\theta$, which  we are assuming to be 
an isometric (or completely isometric) `copy' of the  operator algebra (resp.\  Jordan operator algebra) $A$.  We say that such $B$ is an {\em operator algebra (resp.\  Jordan operator algebra) complexification}  of $A$.  
We show that then $B$ is (isometrically isomorphic to) a concrete complexification of 
an  isometric copy of $A$ of the type described 
at the start of this paragraph.  Suppose that $B$ acts  on a Hilbert space $H$, and 
set $D = \{ ( b , \overline{\theta(b)} ) \in B(H) \oplus \overline{B(H)}  : b \in B \}$, a copy of $B$. One may  use the fact that $\overline{B(H)} = B(\bar{H})$ $*$-isomorphically, 
where $\overline{B(H)}$ is the $C^*$-algebra in e.g.\ the remark after Proposition 2.1 in 
\cite{BWinv}.  Define $C$ to be the $C^*$-algebra generated by $D$, and let 
$\rho$ be the period 2 conjugate linear $*$-automorphism on $C$ which is the restriction 
of the $*$-automorphism $(S, \bar{T}) \mapsto (T, \bar{S})$ on  $B(H) \oplus \overline{B(H)}$.
The fixed points of $\rho$ are a real  $C^*$-algebra $M$ with complexification $C$,
and $A$ may be viewed as 
its copy $\{ (a , \overline{\theta(a)} ) : a \in A \}$ in $M$.   Moreover $\rho$ extends 
the copy of $\theta$ on $D$.   Thus the complexification of this copy of $A$ 
as described at the start of this paragraph, is  isometrically isomorphic to $B$. 
The same proof works in the completely isometric case.


It is important to realize that the operator algebra (resp.\ Jordan operator algebra) complexification $A_c$ is not uniquely defined up to isometric isomorphism or isometric 
Jordan isomorphism.   That is, the  
complexification of a real operator algebra $A$ is not uniquely defined at the Banach algebra level.
 This is clear for example from Proposition \ref{wtex}.
If however we also take the operator space structure of $A$ into account then  $A_c$ is  uniquely defined up to 
completely isometric algebra (resp.\ Jordan  algebra) isomorphism.
This follows for example by \cite[Theorem 2.1]{RComp}.  If $A \subset B(H)$ is a 
real Jordan operator algebra then $A_c$ may be identified up to real  complete isometric isomorphism with 
the Jordan subalgebra of matrices of the form (\ref{ofr}) in  $M_2(A) \cap M_2(B(H))$.
Note that  $A \cap (A_c)^{-1} = A^{-1}$, since e.g.\ if $a(b+ i c) = 1$ then
$iac = 0$ and $ab = 1$.  

If  $A$ is a Jordan operator subalgebra  of $B(H)$, then the {\em diagonal}  
$\Delta(A) = A \cap A^*$  is a $JC^*$-algebra.    An element $q$ in a Jordan operator algebra $A$
 is a projection if $q^2 = q$ and $\| q \| = 1$
(so these are just the orthogonal projections on the 
Hilbert space which $A$ acts on, and which are in $A$).    Clearly $q \in \Delta(A)$. 
A {\em Jordan  contractive approximate identity}
(or {\em J-cai} for short) for a Jordan operator algebra $A$,  is a net
$(e_t)$  of contractions (i.e.\ elements of norm $\leq 1$) with $e_t \circ a \to a$ for all $a \in A$.
If a J-cai for $A$ exists then $A$ is called {\em approximately unital}.    See Lemma \ref{jcai}  for the 
 main result on such approximate identities.   One consequence of this result is that an associative
 operator algebra has a cai if and only if it has a J-cai.  
  
We say that an operator $x$ in $B(H)$ is  {\em real positive}  if $x + x^* \geq 0$.  (Sometimes this is called being {\em accretive}.) 
This is the same as $x$ being  real positive in $B(H_c)$, and hence it follows 
that the characterizations mentioned in  \cite[Lemma 2.4]{BSan} 
of real positive elements in $B(H)$ are still valid.    For example,  $x + x^* \geq 0$ if and only  
$\| I - tx \| \leq 1 + t^2 \| x \|^2$ for all $t > 0$.   If $A$ is a unital subspace or 
(Jordan) subalgebra of $B(H)$ then we define the real positive  elements in
$A$ to be ${\mathfrak r}_{A} = \{ x \in A  : x + x^* \geq 0 \}$.   If $A$ is unital then it follows that the characterizations in  \cite[Lemma 2.4]{BSan} 
are true for $A$, and that the definition of `real positive' or ${\mathfrak r}_{A}$ does not depend on the particular $B(H)$ that $A$ sits in (isometrically and unitally).

For a (Jordan) operator algebra  or unital operator space $A$, because of the uniqueness of unitization up to isometric isomorphism (see Section 3),  
we can define unambiguously ${\mathfrak F}_A = \{ a \in A : \Vert 1 - a \Vert \leq 1 \}$.  Then 
 $$\frac{1}{2} {\mathfrak F}_A = \{ a \in A : \Vert 1 - 2 a \Vert \leq 1 \} \subset {\rm Ball}(A).$$
Note that   $x \in {\mathfrak F}_A$ if and only if 
$x^* x \leq x + x^*$.   This is because $\Vert 1 - a \Vert \leq 1$ if and only if $(1 - a)^* (1 - a) \leq 1$.
It follows that  ${\mathfrak F}_A \subset {\mathfrak r}_A$.

If $T : X \to Y$ we write $T_n$ for the canonical `entrywise' amplification taking $M_n(X)$ to $M_n(Y)$.   
The completely bounded norm is $\| T \|_{\rm cb} = \sup_n \, \| T_n \|$, and $T$ is contractive (resp.\ 
completely  contractive) if  $\| T \|  \leq 1$ (resp.\ $\| T \|_{\rm cb}  \leq 1$). 
A map $T$ is said to be {\em real positive} if it takes  real positive elements to real positive elements.
We say that it is {\em real completely positive} or {\em RCP} if $T_n$ is  real positive for all $n \in \Ndb$.  We remark that one may study some aspects of real positivity or real positive maps within the context of the theory of
numerical range and generalized numerical range (see e.g.\ \cite[Sections 2.1 and 2.9]{Rod}, or  
\cite[Section 4.3]{RodP} for some recent results that may be useful in future studies).
  
If $A$ is a real Jordan subalgebra of a real $C^*$-algebra $B$ then $A^{**}$ with its Arens product  
is a  weak* closed real Jordan subalgebra of the  real von Neumann algebra $B^{**}$. 
This follows by routine techniques as in the complex case (see \cite[Section 4.2]{WTT} 
and \cite[Section 1]{BWj}).   Since the diagonal $\Delta(A^{**})$ is 
a real JW*-algebra  (that is a weak* closed real $JC^*$-algebra), it follows that 
$A^{**}$ is closed under meets and joins of projections.

States on a unital real Jordan operator algebra $A$ (that is, unital contractive 
functionals) extend to states on any Jordan operator algebra complexification 
$A_c$ by the real Hahn-Banach theorem.    We will discuss states on approximately unital algebras in Section \ref{ai}.

\section{Real completely positive maps on  real operator algebras} \label{Sec2}

\begin{lemma} Let $A$ be a real $C^*$-algebra and let $A_c$ be its complexification.  If $x,y \in A$ then $x+iy\geq 0$ in $A_c$ if and only if $\begin{bmatrix}
       x    & -y \\
       y   & x
    \end{bmatrix}$ is positive in $M_2(A).$
\end{lemma}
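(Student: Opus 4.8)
The plan is to realize the stated equivalence as an instance of the fact that a $*$-isomorphism between real $C^*$-algebras both preserves and reflects positivity, combined with spectral permanence for $C^*$-subalgebras. Recall from the discussion around (\ref{ofr}) that $A_c$ may be identified, up to real complete isometric isomorphism, with the set $\mathcal{A}$ of matrices of the form (\ref{ofr}) sitting inside $M_2(A)$. I would make this identification explicit via the real-linear map $\pi : A_c \to M_2(A)$, $\pi(x+iy) = \begin{bmatrix} x & -y \\ y & x \end{bmatrix}$, and verify that $\pi$ is a $*$-homomorphism onto a real $C^*$-subalgebra $\mathcal{A} = \pi(A_c)$ of $M_2(A)$.

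First I would check that $\mathcal{A}$ is a genuine $C^*$-subalgebra: it is visibly a norm-closed real subspace (cut out by the linear conditions that the two diagonal entries agree and the two off-diagonal entries are negatives of one another), and a short $2\times 2$ computation shows it is closed under the product and adjoint of $M_2(A)$. The same computations show $\pi$ is multiplicative and intertwines the involutions; for instance $(x+iy)^* = x^* - iy^*$ is sent to $\begin{bmatrix} x^* & y^* \\ -y^* & x^* \end{bmatrix}$, which is exactly the entrywise adjoint in $M_2(A)$ of (\ref{ofr}). Since $\pi$ is clearly injective, it is a real-linear $*$-isomorphism of $A_c$ onto $\mathcal{A}$. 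Then I would invoke two standard facts. A $*$-isomorphism of real $C^*$-algebras carries positive elements to positive elements and reflects positivity, so $x+iy \geq 0$ in $A_c$ if and only if $\pi(x+iy) \geq 0$ in $\mathcal{A}$; and by spectral permanence, positivity of a selfadjoint element of the $C^*$-subalgebra $\mathcal{A}$ is the same as its positivity in the ambient $M_2(A)$. Chaining these equivalences yields precisely the assertion.

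The one point requiring genuine care, rather than bookkeeping, is the interplay of scalars. The map $\pi$ is only real-linear, so one must resist the temptation to treat it as a $*$-isomorphism of \emph{complex} $C^*$-algebras and instead work throughout in the real category. This is legitimate because positivity is a purely real notion, depending only on the selfadjoint part and its spectrum: the positive cone of $A_c$ is unchanged when we forget its complex scalar multiplication and regard it as a real $C^*$-algebra, so $\pi$ really is a morphism of real $C^*$-algebras and the cited real $C^*$-facts apply verbatim. Everything else reduces to the routine matrix calculations indicated above.
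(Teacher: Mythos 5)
Your proof is correct, but it takes a genuinely different route from the paper's. You work entirely inside the real category: you verify directly that the matrices $\begin{bmatrix} x & -y \\ y & x \end{bmatrix}$ form a norm-closed real $*$-subalgebra $\mathcal{A}$ of $M_2(A)$, that $x+iy \mapsto \begin{bmatrix} x & -y \\ y & x \end{bmatrix}$ is a real-linear $*$-isomorphism of $A_c$ (with its complex scalars forgotten) onto $\mathcal{A}$, and you then combine the scalar-blindness of positivity (the cone $\{b^*b\}$ does not see the scalar field) with spectral permanence for the inclusion $\mathcal{A} \subseteq M_2(A)$. The paper instead passes upward to the complexification: the matrix is positive in $M_2(A)$ if and only if it is positive in $M_2(A)_c = M_2(A_c)$, and the set of such matrices over the complex algebra $B = A_c$ is a $C^*$-subalgebra of $M_2(B)$ that is $*$-isomorphic to $B \oplus B$ via $(a+ib,\, a-ib)$; positivity of the matrix is thus equivalent to $x+iy \geq 0$ and $x-iy \geq 0$ holding simultaneously, and the conjugate-linear $*$-automorphism $x+iy \mapsto x-iy$ of $A_c$ collapses these two conditions into one. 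Your approach is more elementary and self-contained -- it needs no complex structure theory beyond the definition of $A_c$ -- but it leans on the (true, and worth citing from \cite{Li} or \cite{Good}) facts that $*$-isomorphisms of real $C^*$-algebras preserve and reflect positivity and that positivity of a selfadjoint element of a real $C^*$-subalgebra agrees with its positivity in the ambient algebra; you correctly isolate the scalar issue as the one delicate point, and your resolution via the algebraic characterization of the positive cone is sound. The paper's approach outsources all of this to standard complex $C^*$-theory and, as a byproduct, records the slightly stronger observation that positivity of the matrix forces both $x+iy$ and $x-iy$ to be positive.
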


\begin{proof}   For any complex $C^*$-algebra $B$ the map taking the $C^*$-subalgebra of $M_2(B)$ consisting of $2 \times 2$ matrices of the above form,
for $a, b \in B$, to $(a+ib,a-ib)$, is
a faithful $*$-homomorphism 
onto $B \oplus B$.   Now set $B = A_c$.  The matrix in the lemma is positive in $M_2(A)$ if and only if it
 is positive in $M_2(A)_c = M_2(A_c)$, hence  if and only if $(x+iy , x-iy)$  is positive in $B \oplus B$.  
Also the map $x+iy \mapsto x-iy$ is a $*$-automorphism of $B= A_c$, hence is positive.
Thus the matrix in the lemma is positive if and only if  both $x \pm i y \geq 0$, and if and only if  $x + iy \geq 0$.  
\end{proof}

Another proof of the last result may be found in \cite[Lemma 3.3.1]{WTT}.  This uses the spectrum and the fact, which is easy to see, that if $x,y \in A$ then $x+iy$ is selfadjoint in $A_c$ if and only if $x$ is selfadjoint and $y$ is antisymmetric (i.e.\ $y^* = -y$).  

 Let  $X \subset B(H)$ be an operator space which is selfadjoint (that is
 $x^* \in X$ if $x \in X$).
Write $X_{\rm sa}$ (resp.\ $X_{\rm as}$) for the selfadjoint  (resp.\ antisymmetric) elements in $X$.
 Note that $X = X_{\rm sa} \oplus X_{\rm as}$ (using the relation $x = \frac{1}{2} (x+ x^*) + \frac{1}{2} (x- x^*)$).  

\begin{lemma}   \label{Tsyjo}  If $X, Y$ are real selfadjoint operator spaces and $T : X \to Y$  is real linear then $T$ is selfadjoint if and only if 
$T(X_{\rm sa}) \subset Y_{\rm sa}$ and $T( X_{\rm as}) \subset Y_{\rm as}$.  \end{lemma}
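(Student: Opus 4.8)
The plan is to exploit the canonical real-linear decomposition $X = X_{\rm sa} \oplus X_{\rm as}$ noted just before the statement, together with the very definition of a selfadjoint map, namely that $T$ is selfadjoint precisely when $T(x^*) = T(x)^*$ for all $x \in X$. The whole argument reduces to reading off what this relation says on the two summands, and conversely reassembling it from them.

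For the forward implication I would assume $T(x^*) = T(x)^*$ and test it on each summand. If $x \in X_{\rm sa}$, then $x^* = x$ gives $T(x)^* = T(x^*) = T(x)$, so $T(x) \in Y_{\rm sa}$; and if $x \in X_{\rm as}$, then $x^* = -x$ gives $T(x)^* = T(x^*) = T(-x) = -T(x)$ by real linearity, so $T(x) \in Y_{\rm as}$. This establishes both claimed inclusions.

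For the converse I would take an arbitrary $x \in X$ and write $x = a + b$ with $a = \tfrac12(x + x^*) \in X_{\rm sa}$ and $b = \tfrac12(x - x^*) \in X_{\rm as}$, so that $x^* = a - b$. Applying the hypothesis, $T(a) \in Y_{\rm sa}$ and $T(b) \in Y_{\rm as}$, whence $T(x)^* = T(a)^* + T(b)^* = T(a) - T(b)$. On the other hand $T(x^*) = T(a-b) = T(a) - T(b)$ by real linearity, and comparing the two expressions yields $T(x^*) = T(x)^*$. Hence $T$ is selfadjoint.

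There is no real obstacle here: the statement is a direct unwinding of definitions along the $\Rdb$-linear splitting into symmetric and antisymmetric parts. The only point worth flagging is that \emph{both} inclusions are genuinely used in the converse, since $T(x)^*$ picks up the antisymmetric piece with a sign change and we need $T$ to respect that sign on $X_{\rm as}$; one inclusion alone would not suffice. Note also that, in contrast to several arguments elsewhere in the real theory, no appeal to states, numerical range, or the scalar $i$ is involved, so the reasoning is identical to the complex case.
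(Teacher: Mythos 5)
Your argument is correct and coincides with the paper's proof: the forward direction is the routine check the paper calls obvious, and the converse is exactly the paper's computation using the decomposition $x = \tfrac12(x+x^*) + \tfrac12(x-x^*)$ to get $T(x)^* = T(a) - T(b) = T(x^*)$. Nothing to add.
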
 

\begin{proof}    The one direction is obvious.  If $T(X_{\rm sa}) \subset Y_{\rm sa}$ and $T( X_{\rm as}) \subset Y_{\rm as}$
and $x = \frac{1}{2} (x+ x^*) + \frac{1}{2} (x- x^*)$, then $T(\frac{1}{2} (x+ x^*)) \in  Y_{\rm sa}$ and
$T(\frac{1}{2} (x- x^*)) \in  Y_{\rm as}$.  Thus 
$$T(x)^* = T(\frac{1}{2} (x+ x^*)) – T( \frac{1}{2} (x- x^*)) = T(x^*)$$
as desired.   \end{proof}

A {\em real operator system}  is a selfadjoint unital real subspace $X$ of $B(H)$ for a real Hilbert space $H$ (or of  a real unital $C^*$-algebra). 
We say that $x \in X$ is positive if and only if $x$ is positive in $B(H)$; so $X_+ = X \cap B(H)_+$.
One usually considers operator systems together with their matrix structure, with morphisms the completely positive selfadjoint maps,
or sometimes the completely positive unital selfadjoint maps.   The matrix  structure 
consists  usually of the positive cones $M_n(X)_+ \subset M_n(B(H))_+ = B(H^{(n)})_+$, for all
$n \in \Ndb$.   A {\em real unital operator space}  is a unital subspace $X$ of a real unital $C^*$-algebra (or of $B(H)$).
Again one usually considers these  together with their operator space (i.e.\ matrix norm) structure, with morphisms the  unital completely contractive maps.
 
A unital selfadjoint map defined on a real operator system $X$ is completely contractive if and only if it is completely positive, by \cite[Proposition 4.1]{ROnr}.  
From this it is easily seen that the positive cone $X_+$ (and $M_n(X)_+$ for all $n \in \Ndb$)  is independent of a choice of representation.

If $T : X \to Y$ then $T_c:X_c\to Y_c$ is defined as $T_c(x+iy)=T(x)+iT(y)$ for
$x,y \in A$.

\begin{lemma} \label{lemos} Let $X$ be a real operator system and $T:X\to B(H)$ be a completely positive map. Then $T_c:X_c\to B(H)_c$ is completely positive.
Also  $T$ and $T_c$ are selfadjoint
and  $\| T \|  = \| T \|_{\rm cb} = \| T (1) \|$. 
\end{lemma}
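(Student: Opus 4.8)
The plan is to reduce everything to the complexification $T_c$ and the complex theory, using the first (unlabelled) Lemma of this section as the bridge between real and complex positivity. The main structural facts I would set up first are that $X_c$ is a complex operator system, that $M_n(X_c) = M_n(X)_c$ with matching positive cones, that the inclusion $M_n(X) \hookrightarrow M_n(X_c)$ is (completely) isometric, and that under these identifications $(T_c)_n = (T_n)_c$.

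First I would show $T_c$ is completely positive. Granting $(T_c)_n = (T_n)_c$, it suffices to prove $(T_n)_c$ is positive for every $n$. Given $u + iv \in M_n(X)_c$ lying in $M_n(X_c)_+$, with $u,v \in M_n(X)$, the first Lemma (applied in the real $C^*$-algebra $M_n(B(H'))$, where $X \subset B(H')$) says this is positive exactly when $\begin{bmatrix} u & -v \\ v & u\end{bmatrix} \geq 0$ in $M_2(M_n(X))$. Since $T$ is completely positive, so is $T_n$, whence $(T_n)_2$ is positive and carries this matrix to $\begin{bmatrix} T_n(u) & -T_n(v) \\ T_n(v) & T_n(u)\end{bmatrix} \geq 0$. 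Applying the first Lemma in the reverse direction (now in $M_n(B(H))$) gives $(T_n)_c(u+iv) = T_n(u) + i T_n(v) \geq 0$, as desired. For selfadjointness, since $T_c$ is positive on the complex operator system $X_c$ it is Hermitian by the standard complex fact \cite{Pau}; restricting to $X$ and comparing real and imaginary parts yields $T(x^*) = T(x)^*$, so $T$ is selfadjoint as well. Alternatively one can argue directly via Lemma \ref{Tsyjo}: selfadjoint $x$ are handled using $\|x\| 1 \pm x \geq 0$, while for an antisymmetric contraction $y$ the matrix $\begin{bmatrix} 1 & y \\ -y & 1\end{bmatrix}$ is positive in $M_2(X)$, so its image $\begin{bmatrix} T(1) & T(y) \\ -T(y) & T(1)\end{bmatrix}$ is positive, hence selfadjoint, forcing $T(y)^* = -T(y)$.

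Finally the norm identities. Here $T_c(1) = T(1)$, and the complex theory for completely positive maps on unital operator systems gives $\|T_c\|_{\rm cb} = \|T_c(1)\| = \|T(1)\|$. Because $M_n(X) \hookrightarrow M_n(X_c)$ is isometric and $T_n$ is the restriction of $(T_c)_n$, we get $\|T\|_{\rm cb} \leq \|T_c\|_{\rm cb}$. Combining this with the trivial bounds $\|T(1)\| \leq \|T\| \leq \|T\|_{\rm cb}$, the chain
$$\|T(1)\| \leq \|T\| \leq \|T\|_{\rm cb} \leq \|T_c\|_{\rm cb} = \|T(1)\|$$
collapses to equalities.

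I expect the genuine subtlety to lie in the selfadjointness, specifically in the antisymmetric direction. The usual complex argument splitting an element into real and imaginary parts is unavailable here, and as the $E_{12} - E_{21}$ example in the introduction shows, positivity alone cannot detect antisymmetric structure; one really needs at least $2$-positivity, which is exactly why the $2 \times 2$ characterization in the first Lemma is the right instrument. I would also take care to verify the cone-matching $M_n(X_c) = M_n(X)_c$ and the complete isometry of the complexification, since the entire reduction to the complex case rests on these identifications.
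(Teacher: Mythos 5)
Your argument is correct and follows essentially the same route as the paper: the unlabelled first Lemma of Section 2 converts positivity of $u+iv$ in $M_n(X_c)=M_n(X)_c$ into positivity of the $2n\times 2n$ real block matrix, complete positivity of $T$ is applied there, and the Lemma is reversed, after which selfadjointness and the norm equalities are read off from the complex theory (Paulsen's Proposition 3.6). Your extra direct proof of selfadjointness via Lemma \ref{Tsyjo} and the explicit norm chain are fine but not needed beyond what the paper does.
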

\begin{proof}  We will temporarily write $\iota$ for $i \in \Cdb$, to avoid confusion with usual matrix subscripting.
Let $w = [x_{ij}+\iota \, y_{ij}]\geq 0$ in $M_n(X_c)=M_n(X)_c$, and set $z = \begin{bmatrix}
       x_{ij}    & -y_{ij}\\
       y_{ij}   & x_{ij}
    \end{bmatrix}$.  
Then by the Lemma above, $z \geq 0$ in $M_{2n}(X)$. Since $T$ is completely positive, then 
    $$T_{2n}(z) = \begin{bmatrix}
       T(x_{ij})    & -T(y_{ij})\\
       T(y_{ij})   & T(x_{ij})
    \end{bmatrix}\geq 0$$ in $M_{2n}(X)$. Thus by the same lemma above, $(T_c)_n([x_{ij} +\iota \, y_{ij}]) = [[T(x_{ij})+\iota \, T(y_{ij})]\geq 0$.
    Thus $T_c$ is completely positive.  
    Since a positive map between complex operator system is selfadjoint, $T_c$ is selfadjoint. Thus $T$ is also selfadjoint.   The norm and  completely bounded norm of  $T_c$ equal $\| T (1) \|$ by e.g.\ \cite[Proposition 3.6]{Pau}, hence 
 $\| T \|  = \| T \|_{\rm cb} = \| T (1) \|$. 
\end{proof}

\begin{proposition} \label{rcps} Let $X$ be a real unital operator space or approximately unital real Jordan operator algebra  
and $T:X\to B(H)$ real completely positive. Then $T_c$ is real completely positive.
Also $T$ has a well defined 
extension $\tilde{T}$ to $X + X^*$ which is selfadjoint and completely positive.
Also  $\| T \|  = \| T \|_{\rm cb} = \| \tilde{T} \|_{\rm cb}$.
This equals $\| T (1) \|$ in the unital case, and in the  nonunital case is
$\sup_t \, \| T(e_t) \|$ ($= \lim_t \, \| T(e_t) \|$) for any J-cai $(e_t)$ for $A$. 
 \end{proposition}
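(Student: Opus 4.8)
The plan is to lift $T$ to its complexification $T_c$, invoke the known complex version of this result for $T_c$, and then descend back to the real setting. First I would prove that $T_c$ is RCP. Write $B$ for a real $C^*$-algebra (or $B(H)$) containing $X$. The point is the real-positive analogue of the first Lemma of this section: for $x,y \in B$ the element $x + iy \in B_c$ is real positive if and only if $\begin{bmatrix} x & -y \\ y & x \end{bmatrix}$ is real positive in $M_2(B)$. This is immediate, since $(x+iy)+(x+iy)^* = (x+x^*) + i(y-y^*)$, while with $z$ the displayed matrix one has $z + z^* = \begin{bmatrix} u & -v \\ v & u \end{bmatrix}$ where $u := x + x^*$ is selfadjoint and $v := y - y^*$ is antisymmetric; the first Lemma applied to $u,v$ says exactly that this is positive iff $u + iv = (x+iy)+(x+iy)^* \geq 0$. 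Amplifying to matrices precisely as in the proof of Lemma \ref{lemos} (reading `real positive' for `positive' throughout), and using that the complete isometry $w \mapsto z$ identifying $X_c$ with the matrices of form (\ref{ofr}) intertwines $T_{2n}$ with $(T_c)_n$, the RCP hypothesis on $T$ gives that each $(T_c)_n$ is real positive, i.e.\ $T_c$ is RCP.

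Next, since $X_c$ is a complex unital operator space (resp.\ an approximately unital complex Jordan operator algebra, with the same net $(e_t)$ as J-cai) and $T_c$ is RCP, the complex version of the present proposition (from \cite[Section 2]{BNjp}) provides a completely positive selfadjoint extension $\widetilde{T_c}$ of $T_c$ to $X_c + X_c^* = (X+X^*)_c$. I would then note that any selfadjoint extension is forced on a selfadjoint element $a + a^*$ (for $a \in X_c$) by $\widetilde{T_c}(a + a^*) = T_c(a) + T_c(a)^*$, so the selfadjoint extension is unique. Applying this uniqueness to $\iota_{B(H)} \circ \widetilde{T_c} \circ \iota_{(X+X^*)_c}$, which is again complex linear (a conjugate-linear map composed with $\widetilde{T_c}$ and another conjugate-linear map), completely positive, selfadjoint, and which agrees with $T_c$ on $X_c$ by a short computation using $\iota(x+iy) = x - iy$, I conclude that $\widetilde{T_c}$ commutes with the canonical conjugations. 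Hence $\widetilde{T_c}$ carries the fixed-point real operator system $X + X^*$ into $B(H)$, and $\tilde T := \widetilde{T_c}|_{X+X^*}$ is a selfadjoint, completely positive (real positive cones being the restrictions of the complex ones, by the equivalence of the first step) extension of $T$. In particular $\widetilde{T_c} = (\tilde T)_c$.

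For the norm identities I would combine the evident inequalities
\[ \sup_t \| T(e_t) \| \le \| T \| \le \| T \|_{\rm cb} \le \| \tilde T \|_{\rm cb} = \| \widetilde{T_c} \|_{\rm cb} \]
(the last equality since $\widetilde{T_c} = (\tilde T)_c$ and complexification preserves the completely bounded norm, by \cite[Theorem 2.1]{RComp}) with the complex norm formula for completely positive maps. Taking $(e_t)$ selfadjoint and real positive (Lemma \ref{jcai}), it is an approximate identity for the operator system $X_c + X_c^*$, and the complex theory gives $\| \widetilde{T_c} \|_{\rm cb} = \sup_t \| \widetilde{T_c}(e_t) \| = \lim_t \| \widetilde{T_c}(e_t) \|$; as $e_t \in X$ we have $\widetilde{T_c}(e_t) = T(e_t)$, so the whole chain collapses to $\sup_t \| T(e_t) \| = \lim_t \| T(e_t) \|$. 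In the unital case one simply replaces $e_t$ by $1 \in X$ and applies Lemma \ref{lemos} to $\tilde T$ on $X + X^*$, giving $\| T \| = \| T \|_{\rm cb} = \| \tilde T \|_{\rm cb} = \| T(1) \|$.

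The main obstacle is the descent in the second step. A direct real argument would require $T$ to be selfadjoint on the diagonal $\Delta(X) = X \cap X^*$, which does not follow transparently from real positivity alone (indeed this is the sort of failure for which the paper later introduces \emph{systematic real positivity}). The complexification route sidesteps this because well-definedness and selfadjointness of the extension are inherited from the complex theorem, with the uniqueness argument ensuring that the complex extension is genuinely \emph{real}. A secondary point to check with care is that the chosen J-cai acts as an approximate identity for the selfadjoint space $X + X^*$, which is why one wants the positive, real-positive J-cai supplied by Lemma \ref{jcai}.
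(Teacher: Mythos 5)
Your overall route is the same as the paper's: show $T_c$ is RCP via the $2\times 2$ real-matrix picture of the complexification, invoke the complex theorem to get a selfadjoint completely positive extension of $T_c$ to $X_c + X_c^*$, and restrict back. Your first step is a clean repackaging of the paper's computation (the paper manipulates $w+w^*$ directly rather than isolating the real-positive analogue of the opening lemma), and your uniqueness/conjugation argument for the descent is a legitimate, if roundabout, way to see what the paper states in one line (more simply: any selfadjoint extension satisfies $\widetilde{T_c}(a+b^*) = T(a)+T(b)^*$ for $a,b\in X$, which visibly lies in $B(H)$). The unital norm identity is also fine.

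The gap is in the norm identities in the approximately unital case. First, you cannot in general choose a selfadjoint J-cai (Lemma \ref{jcai} does not supply one, and a nonselfadjoint algebra need not possess one), and in any case the statement asserts the formula for an \emph{arbitrary} J-cai, so verifying it for one favorable net does not suffice. Second, and more seriously, your key step $\| \widetilde{T_c}\|_{\rm cb} = \lim_t \| \widetilde{T_c}(e_t)\|$ rests on $(e_t)$ acting as an approximate identity for the nonunital selfadjoint space $X_c + X_c^*$, which you flag but do not justify and which is not clear: for instance $e_t b^* \to b^*$ amounts to $b\, e_t^* \to b$, and $e_t^*$ need not even lie in $X$. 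The paper closes the loop differently: pass to the canonical weak* continuous extension $\widehat{T_c}$ on $X_c^{**}$, which is again RCP, apply the unital case there to get $\| \tilde{T} \|_{\rm cb} \le \| \widehat{T_c}\|_{\rm cb} = \| \hat{T}(1)\|$, and then use that every J-cai converges weak* to the identity of $X^{**}$ (Lemma \ref{jcai}) together with weak* lower semicontinuity of the norm to obtain $\| \hat{T}(1)\| \le \sup_t \| T(e_t)\| \le \|T\|$, collapsing the chain for an arbitrary J-cai; the $\lim_t$ assertion then follows by a short subnet argument. You should replace your approximate-identity step with this bidual argument; the rest of your proof stands.
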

 
\begin{proof} We follow  the idea and notation of Lemma \ref{lemos}. Let $w = [x_{ij}+\iota \, y_{ij}]$.
If $w + w^* \geq 0$ then $$\begin{bmatrix}
       x_{ij}    & -y_{ij}\\
        y_{ij}   & x_{ij}
    \end{bmatrix}+\begin{bmatrix}
       x^*_{ji}    & y^*_{ji}\\
        -y^*_{ji}   & x^*_{ji}
    \end{bmatrix}  =  \begin{bmatrix}
       x_{ij}    & -y_{ij}\\
        y_{ij}   & x_{ij}
    \end{bmatrix}+ \begin{bmatrix}
       x_{ij}    & -y_{ij}\\
        y_{ij}   & x_{ij}
    \end{bmatrix}^* \geq 0$$ in $M_{2n}(X)$. Since $T$ is completely real positive, $$\begin{bmatrix}
       T(x_{ij})    & -T(y_{ij})\\
        T(y_{ij})   & T(x_{ij})
    \end{bmatrix}+\begin{bmatrix}
       T(x_{ij})    & -T(y_{ij})\\
        T(y_{ij})   & T(x_{ij})
    \end{bmatrix}^* \geq 0 .$$ 
   Reversing the steps at the start of the proof, but applied to the last matrix,  we see that
    $[T(x_{ij})+ \iota \, T(y_{ij})]+[T(x_{ij}) + \iota \, T(y_{ij})]^* \geq 0$.  Hence $T_c$ is completely real positive.
    
  In the unital case, by the complex theory (see e.g.\ \cite[Theorem 2.5]{BBS}) $T_c$ is bounded and extends to a selfadjoint
 completely positive map $X_c + (X_c)^* \to B(H_c)$.   
This restricts to a  selfadjoint completely positive map $X + X^* \to B(H)$.    
 By Lemma \ref{lemos} we have $\| \tilde{T} \|_{\rm cb} = \| T (1) \| = \| \tilde{T} \|$, which implies 
 that   the norm and  completely bounded norm of  $T$ equals   $\| T (1) \|$   too.  
 
In the approximately unital real Jordan operator algebra case,  by the complex theory (see  
 \cite[Lemma 2.1]{BNjp}  and \cite[Proposition 4.9]{BWj}) $T_c$ is bounded, so  $T$ is bounded, 
and $\| T_c \| = \| T \| = \|  T_c^{**}(1) \| = \| T^{**}(1) \|$.    Alternatively, let $\widehat{T_c} : X_c^{**} \to B(H_c)$ be the 
canonical weak* continuous extension.   This is RCP, e.g.\ by an argument in the proof of 
\cite[Theorem 2.6]{BBS}.   The restriction to $W = X^{**}$ is  the 
canonical weak* continuous extension $\hat{T} : X^{**}  \to B(H)$.    Then $$\|  \widehat{T_c}  \|_{\rm cb} = 
 \|  \widehat{T_c}  \| = \|  \widehat{T_c} (1) \| = \|  \hat{T} (1) \|.$$
 We may then apply the unital case to $T^{**}$ (resp.\  $\hat{T}$).    
For example the canonical extension of this weak* continuous map to $W + W^*$ is (selfadjoint and) completely positive. 
Restricting to $X + X^*$ we see that the same is true for the canonical extension $\tilde{T}$ to $X + X^*$.  
We have $$\| T \|  \leq \| \tilde{T} \| \leq \| \tilde{T} \|_{\rm cb} \leq  \|  \widehat{T_c}  \|_{\rm cb}  
=  \|  \hat{T} (1) \|  \leq \sup_t \, \| T(e_t) \| \leq  \| T \| ,$$  for any J-cai $(e_t)$ for $A$. 
Thus  these are all equal.    A similar argument works with $\hat{T}$ replaced by $T^{**}$.
We show that this number equals $\lim_t \, \| T(e_t) \|$.  Indeed if a subnet 
of $( \| T(e_t) \| )$ had limit $< \| T \|$, then by a further replacement we may suppose all terms in this subnet
were bounded above by $\alpha < \| T \|$.  Replacing $(e_t)$ by the appropriate subnet
in the last centered equation would yield the contradiction $\| T \| \leq \alpha < \| T \|$.  
\end{proof} 

\begin{corollary} \label{uccrcp}  If $T : X \to B(H)$ is a unital map on a unital
 real operator space then $T$ is completely contractive if and only if $T$ is real completely positive.  \end{corollary}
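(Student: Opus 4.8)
The plan is to prove the two implications separately, using Proposition \ref{rcps} for one direction and the norm characterization of accretivity recorded in Section 1 for the other. For the implication ``RCP $\Rightarrow$ completely contractive'' there is essentially nothing to do: if $T$ is unital and real completely positive, then Proposition \ref{rcps} gives $\| T \|_{\rm cb} = \| T(1) \| = \| 1 \| = 1$, so $T$ is completely contractive.

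For the converse I would first reduce to a single map at ``level one.'' Since $M_n(X)$ is again a unital real operator space (with unit $I_n$) and $T_n : M_n(X) \to M_n(B(H)) = B(H^{(n)})$ is unital with $\| T_n \| \leq \| T \|_{\rm cb} \leq 1$, it suffices to show that every unital contraction $S : X \to B(H)$ on a unital real operator space is real positive. Applying this to each $T_n$ then shows that $T_n$ maps real positive elements to real positive elements for all $n$, which is exactly that $T$ is RCP. Note that at each fixed level only contractivity of $T_n$, not its complete contractivity, is needed.

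So let $S : X \to B(H)$ be unital and contractive, and let $x \in {\mathfrak r}_X$, that is $x + x^* \geq 0$. Because $X$ is unital, the intrinsic characterization from Section 1 applies and gives $\| 1 - tx \| \leq 1 + t^2 \| x \|^2$ for all $t > 0$. Since $S$ is unital and contractive,
$$\| 1 - t S(x) \| = \| S(1 - tx) \| \leq \| 1 - tx \| \leq 1 + t^2 \| x \|^2 , \qquad t > 0 .$$
Here $\| x \|^2$ is just some fixed nonnegative constant. The crucial observation is that accretivity of an element $y$ is characterized by the existence of \emph{some} constant $C$ with $\| 1 - ty \| \leq 1 + t^2 C$ for all $t > 0$; indeed expanding $\| 1 - ty \|^2$ shows that any unit vector witnessing $\langle (y + y^*)\xi, \xi \rangle < 0$ would force a term growing linearly in $t$, which no quadratic majorant can absorb as $t \to 0^+$. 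Applying this with $y = S(x)$ and $C = \| x \|^2$ yields $S(x) + S(x)^* \geq 0$, so $S$ is real positive.

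The one delicate point is exactly the constant appearing in the displayed inequality: after applying the contraction $S$ we are left with $\| x \|^2$ rather than the sharp $\| S(x) \|^2$, and since $\| S(x) \| \leq \| x \|$ these run the ``wrong way.'' This would be fatal if the characterization of real positivity required the optimal constant $\| y \|^2$, but it does not, and this flexibility is what makes the argument succeed. Pleasingly, no appeal to states or to the numerical range is needed, which is welcome in view of the cautions about such devices in the real setting noted in the introduction.
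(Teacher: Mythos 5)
Your argument is correct, and the easy direction (RCP $\Rightarrow$ completely contractive via Proposition \ref{rcps}) is exactly what the paper does. For the converse, however, you part ways with the paper: there it is disposed of in one line by passing to the complexification --- $T_c$ is a unital complete contraction on $X_c$ by \cite[Theorem 2.1]{RComp}, hence RCP by the complex case of the result (see \cite{BBS}), and restricting back to $X$ finishes. Your route stays entirely in the real category: you observe that each $T_n$ is a unital contraction on the unital real operator space $M_n(X)$, and that a unital contraction sends an accretive $x$ (which satisfies $\| 1 - tx \| \leq 1 + t^2 \| x \|^2$ for all $t>0$) to an element satisfying the same inequality with the possibly non-sharp constant $\| x \|^2$, which still forces accretivity by the $t \to 0^+$ expansion. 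This is sound, and it isolates the reusable level-one fact that a unital contraction on a unital real operator space is real positive --- a fact the paper records separately just before Example \ref{expoly}, there deduced instead from the density ${\mathfrak r}_X = \overline{\Rdb^+ {\mathfrak F}_X}$ of Lemma \ref{sprf}. One step you leave tacit deserves a word in the real setting: your limit argument yields $\langle (y+y^*)\xi, \xi \rangle \geq 0$ for all $\xi$, and the introduction explicitly warns that nonnegativity of the quadratic form does not imply positivity of an operator on a real Hilbert space. The step is nevertheless valid here because $y + y^*$ is selfadjoint, and for selfadjoint operators the implication does hold (e.g.\ by complexifying and noting the cross terms of the quadratic form cancel by symmetry). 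With that caveat made explicit, your proof is complete and is a genuine, more self-contained alternative to the paper's complexification argument.
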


\begin{proof} The $(\Leftarrow$)  direction follows from Proposition \ref{rcps}.    The other direction can be seen for example by going to 
the complexification and then using the complex case of the present result (see e.g.\ \cite{BBS}).  
\end{proof} 

By Proposition \ref{rcps} if $X$ is a real unital operator space  then $X + X^*$ is well defined as a real operator system. 
Indeed if $T : X \to Y$ is a surjective unital complete isometry 
between unital operator spaces $X \subset B(H)$ and $Y \subset B(K)$,  then $T$ is real completely positive.
 The canonical extension $\tilde{T}: X + X^*\to B(K): x+y^*\mapsto T(x)+T(y)^*$ is  selfadjoint and is a completely isometric complete order embedding
 onto $Y + Y^*$.   
 As in the proof this may also be seen by extending to the complexification.
 The operator space $X + X^*$ in $B(H)$ has the operator space complexification 
 $(X + X^*)_c \subset B(H_c)$.   In addition, $X_c + X^*_c$ is  a completely reasonable operator space complexification of
 $X + X^*$.  
 By the uniqueness of the operator space complexification (see the introduction), $X_c + X^*_c = (X + X^*)_c.$ 
   
\begin{theorem} \label{contractive-hom} If $\pi:A\to B$ is a homomorphism between real $C^*$-algebras, 
or a Jordan homomorphism between real $JC^*$-algebras, then
$\pi$ is contractive if and only if 
$\pi$ is  selfadjoint (hence is a $*$-homomorphism or Jordan $*$-homomorphism).
In this case $\pi$ is positive, indeed completely positive and 
completely contractive in the real $C^*$-algebra case.
\end{theorem}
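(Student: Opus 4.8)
The plan is to prove the two implications separately. The easy direction is that selfadjointness implies the rest; the substantial point is the converse, which I would establish by working in the bidual rather than by complexifying $\pi$.

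First suppose $\pi$ is selfadjoint, i.e.\ $\pi(a^*)=\pi(a)^*$. Then $\pi$ is a $*$-homomorphism (resp.\ Jordan $*$-homomorphism), and it is positive: if $a\ge 0$ then $a=b^2$ for a selfadjoint $b$ in the associative commutative subalgebra generated by $a$, so $\pi(a)=\pi(b)^2$ with $\pi(b)$ selfadjoint, whence $\pi(a)\ge 0$. Contractivity of a $*$-homomorphism of real $C^*$-algebras (resp.\ Jordan $*$-homomorphism of real $JC^*$-algebras) is standard (see \cite{Li}); alternatively one passes to the complexification $\pi_c$, a complex $*$-homomorphism, and invokes the complex theory. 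In the associative $C^*$-algebra case $\pi_c$ is moreover completely positive and completely contractive by the complex theory; restricting to $A$ (and using the opening Lemma of this section to detect matrix positivity, together with $M_n(A)\subseteq M_n(A_c)$ isometrically) shows $\pi$ is completely positive and completely contractive.

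Now suppose $\pi$ is contractive. The complex device that drives the proof in the complex case --- for selfadjoint $h$ the unitary $e^{ith}$ lies in $A$, and $\|\pi(e^{ith})\|\le 1$ for all real $t$ forces $\pi(h)$ selfadjoint --- is unavailable over $\mathbb{R}$; and complexifying is awkward here, since the embedding $A_c\subseteq M_2(A)$ only bounds $\|\pi_c\|$ by the completely bounded norm of $\pi$, which is not yet in hand. Instead I pass to the biduals: $A^{**}$ and $B^{**}$ are real von Neumann (resp.\ $JW^*$-) algebras, and $\pi^{**}$ is again a contractive (Jordan) homomorphism for the Arens product. By Lemma \ref{Tsyjo} it then suffices to show that $\pi^{**}$ sends selfadjoint elements to selfadjoint elements and antisymmetric elements to antisymmetric elements.

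The gain in passing to $A^{**}$ is that, unlike $A$, it has an abundance of projections: its selfadjoint part is the norm-closed real span of its projections. For any projection $q$, the element $\pi^{**}(q)$ is an idempotent of norm at most $1$, hence a projection, in particular selfadjoint; so $\pi^{**}$ maps selfadjoint elements to selfadjoint elements. For the antisymmetric part I use a purely real substitute for the missing $e^{ith}$: if $k$ is an antisymmetric contraction then $1+k^2\ge 0$, and $w=\sqrt{1+k^2}+k$ is a unitary of $A^{**}$ (the square root lies in, and commutes with $k$ within, the commutative associative real $C^*$-algebra generated by $k$), with $w^*=\sqrt{1+k^2}-k$ and $k=\tfrac12(w-w^*)$. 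Writing $p=\pi^{**}(1)$, the element $\pi^{**}(w)$ is invertible in the corner determined by $p$ with inverse $\pi^{**}(w^*)$, and both have norm at most $1$, so it is unitary there; hence $\pi^{**}(w)^*=\pi^{**}(w^*)$ and therefore $\pi^{**}(k)=\tfrac12(\pi^{**}(w)-\pi^{**}(w)^*)$ is antisymmetric. Combining the two parts yields selfadjointness of $\pi^{**}$, and restriction to $A$ gives that of $\pi$. I expect the antisymmetric part, together with the passage to the bidual to supply enough projections, to be the crux: it is exactly where the standard complex arguments break down and a genuinely real construction is required.
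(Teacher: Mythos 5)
Your proposal is correct and its overall architecture coincides with the paper's: the easy direction is handled by complexifying a (Jordan) $*$-homomorphism and quoting the complex theory, and the hard direction passes to the biduals, cuts down by the projection $\pi^{**}(1)$, and handles the selfadjoint part by noting that contractive idempotents are projections and that the selfadjoint part of a real $W^*$- (or $JW^*$-) algebra is the closed span of its projections. Where you genuinely diverge is the antisymmetric part. The paper argues via states: for $x^*=-x$ and any real state $\varphi$ on $B$, the functional $\varphi\circ\pi$ is a state on $A$, hence vanishes on $x$, and an element annihilated by all states is antisymmetric (citing \cite{Good}). You instead build the real substitute for $e^{ith}$ explicitly: for an antisymmetric contraction $k$ the element $w=\sqrt{1+k^2}+k$ is a unitary with $k=\frac12(w-w^*)$, and a contractive homomorphism sends mutually inverse contractions to mutually inverse contractions, hence unitaries to unitaries, forcing $\pi^{**}(w)^*=\pi^{**}(w^*)$. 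Your route is more constructive and self-contained (it does not need the selfadjointness of real states or the Goodearl characterization of antisymmetric elements), at the cost of a functional calculus construction in the bidual; the paper's route is shorter but leans on two cited facts about real states.

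One step you should spell out: in the Jordan case $\pi^{**}$ is not multiplicative, so the assertion that $\pi^{**}(w)$ is invertible in $pB^{**}p$ with inverse $\pi^{**}(w^*)$ is not immediate from $ww^*=w^*w=1$. It is nevertheless true: either invoke the standard fact that a unital Jordan homomorphism of special Jordan algebras preserves inverses, or argue directly from the identities $\pi(a\circ b)=\pi(a)\circ\pi(b)$ and $\pi(aba)=\pi(a)\pi(b)\pi(a)$ (the latter follows from $aba=2a\circ(a\circ b)-a^2\circ b$). Setting $v=\pi^{**}(w)$, $u=\pi^{**}(w^*)$, these give $uv+vu=2p$, $vuv=v$, $uvu=u$, whence $uv$ and $vu$ are contractive idempotents dominated by $p$ with $uv+vu=2p$, which forces $uv=vu=p$. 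With that supplied, your argument is complete.
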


\begin{proof} 
Assume that $\pi$ is contractive.  By taking biduals we may assume that $A$ is a real $W^*$-algebra and $B = B(H)$.  
For any projection $p \in A$ we have that  $\pi(p)$ is a contractive idempotent, hence is an orthogonal  projection.   In particular, $\pi(1)$ 
is a projection.  Replacing 
$H$ by $\pi(1) H$, $\pi$ becomes unital.    
 If $x = x^*$ then by the spectral theorem we may approximate $x$ by real linear combinations of projections.
 Using this and  the fact about projections proved at the start of the proof, we see that 
  $\pi(x)$ is selfadjoint.    Suppose that $x^* = -x$ and  that $\varphi$ is a real state on $B$.
 Then $\varphi \circ \pi$ is a real state on $A$, and so  $\varphi (\pi(x)) = 0$.  Thus  $\pi(x)$ is antisymmetric by 
 \cite[Exercise 14A]{Good}
 (see also Lemma 2.1.19  in \cite{WTT}).  Thus $\pi$ is selfadjoint by  Lemma \ref{Tsyjo}.  

If $\pi$ is a Jordan $*$-homomorphism then $\pi_c : A_c\to B_c$ is a Jordan $*$-homomorphism.  
By the corresponding fact for complex Jordan $C^*$-algebras, $\pi_c$ is contractive and positive. Thus, $\pi_{|A}=\pi$ is contractive and positive. 
(The positivity can also be seen more directly by the spectral theorem as in the last paragraph.) 

Let $A$ and $B$ be real $C^*$-algebras. If $\pi:A\to B$ is a contractive homomorphism, then by the above  $\pi$ is a $*$-homomorphism. Then $\pi_c:A_c\to B_c$ is a $*$-homomorphism. Thus, $\pi_c$ is completely positive and  completely contractive by a fact in complex $C^*$-algebras. Since $\pi_c|_A=\pi$, $\pi$ is
completely positive and  completely contractive.  
\end{proof}

The following is an analog of the Stinespring dilation and the Arveson extension theorem for completely positive maps on real unital Jordan operator algebras.

\begin{theorem} \label{Sti} Let $A$ be a unital subspace or real approximately unital Jordan subalgebra of a real  $C^*$-algebra $B$ and let
$T:A\to B(H)$ be a real completely positive map. Then $T$ has a  completely positive extension $\tilde{T}:B\to B(H)$. In addition there is a $*$-representation $\pi:B\to B(K)$ for a real Hilbert space $K$, and a contraction $V\in B(H,K)$, such that $$\tilde{T}(a)=V^*\pi(a)V , \qquad a\in B.$$  Moreover, this can be done with $\norm{T}=\norm{T}_{\rm cb}= \| \tilde{T} \|_{\rm cb}
 =  \norm{V}^2$, and this equals $\norm{T(1)}$ if $A$ is unital. \end{theorem}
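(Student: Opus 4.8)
The plan is to reduce the real case to the complex Stinespring--Arveson machinery via the complexification, exactly as in the complex Jordan case treated in the cited papers, while being careful that the extension produced on the complexification is compatible with the conjugation so that it restricts back to the real setting. First I would invoke Proposition~\ref{rcps} to pass from $T : A \to B(H)$ to its complexification $T_c : A_c \to B(H)_c = B(H_c)$, which is real completely positive, bounded, and (again by Proposition~\ref{rcps}) satisfies $\norm{T} = \norm{T}_{\rm cb} = \norm{T_c}$. Here $A_c$ is a unital subspace or approximately unital complex Jordan subalgebra of the complex $C^*$-algebra $B_c$, so we are squarely in the situation covered by the complex analogue of this theorem (the complex Stinespring/Arveson result for RCP maps on complex Jordan operator algebras from \cite[Section~2]{BNjp} and \cite{BBS}).

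Next I would apply that complex result to obtain a completely positive extension $\widetilde{T_c} : B_c \to B(H_c)$, a $*$-representation $\rho : B_c \to B(L)$ on a complex Hilbert space $L$, and a contraction $W \in B(H_c, L)$ with $\widetilde{T_c}(b) = W^* \rho(b) W$ for $b \in B_c$, arranged so that $\norm{T_c} = \norm{T_c}_{\rm cb} = \norm{\widetilde{T_c}}_{\rm cb} = \norm{W}^2$, equalling $\norm{T_c(1)} = \norm{T(1)}$ in the unital case. The main obstacle, and the step requiring real care, is to descend this complex dilation to a genuinely \emph{real} one: I must produce a real Hilbert space $K$, a real $*$-representation $\pi : B \to B(K)$, and a real contraction $V \in B(H,K)$ out of the complex data. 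The natural route is to realise $L$ as the complexification $K_c$ of a real Hilbert space $K$ carrying a conjugation that intertwines $\rho$ with the conjugate-linear period-two automorphism $b \mapsto \bar b$ of $B_c$ (the one whose fixed points are $B$); one builds this conjugation on the Stinespring space directly, since that space is generated by vectors of the form $\rho(b)W\xi$ and the conjugation $\iota_B$ on $B_c$ together with the conjugation on $H_c$ (whose fixed points are $H$) induces a well-defined antiunitary on $L$ precisely because $\widetilde{T_c}$ commutes with the conjugations. Restricting $\rho$ to the fixed-point real algebra $B$ and to the real subspace $K$ fixed by this conjugation yields $\pi$, and restricting $W$ to $H$ (mapping into $K$) yields $V$.

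Concretely, I would check that $\widetilde{T_c}$ is compatible with the conjugation, namely $\widetilde{T_c}(\bar b) = \overline{\widetilde{T_c}(b)}$ for all $b \in B_c$, where the bar on $B(H_c)$ is the conjugation with fixed points $B(H)$; this holds because $\widetilde{T_c} = (\widetilde{T})_c$ is itself a complexification of the real selfadjoint completely positive extension $\widetilde{T} : B \to B(H)$ guaranteed on the $C^*$-level (a completely positive map on a real $C^*$-subalgebra, or real operator system, extends by the real Arveson theorem, which may itself be proved by this same complexification argument). From this compatibility the conjugation on the Stinespring space is forced and antilinear, its fixed points give $K$ with $K_c = L$, and $\rho$ maps $B$ into the real operators $B(K) \subset B(K_c)$. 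Finally I would read off the norm identities: $\norm{V} = \norm{W}$ since $V$ is the restriction of $W$ to the real part, and $\tilde T(a) = V^* \pi(a) V$ is the restriction to $H$ and $B$ of the complex formula, so $\norm{T} = \norm{T}_{\rm cb} = \norm{\tilde T}_{\rm cb} = \norm{V}^2$, equalling $\norm{T(1)}$ in the unital case by Proposition~\ref{rcps}. The genuinely new difficulty throughout is verifying that the conjugation descends and is antiunitary, rather than any of the norm bookkeeping, which follows the complex template verbatim.
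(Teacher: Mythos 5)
Your proposal is correct in substance but takes a genuinely different route from the paper. The paper's proof is short because the real-scalar versions of both the Arveson extension theorem and the Stinespring dilation are already available in Ruan's paper \cite{ROnr}: it uses Proposition \ref{rcps} to produce a selfadjoint completely positive extension of $T$ to the real operator system $A+A^*$ (in the nonunital case, to $W+W^*$ with $W=A^{**}$, via the weak* continuous extension $\hat T$), then extends to all of $B$ (or $B^{**}$) by \cite[Proposition 4.2]{ROnr}, and finally invokes the real Stinespring theorem \cite[Theorem 4.3]{ROnr} to obtain $\pi$ and $V$ directly, with the norm identities read off from $\norm{V}^2=\norm{\hat T(1)}$. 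You instead complexify, run the complex Arveson--Stinespring machinery on $T_c$, and descend by building a period-two antiunitary $J$ on the complex Stinespring space $L$ out of the conjugations on $B_c$ and $H_c$; the fixed points of $J$ give $K$, and $\rho$ and $W$ restrict to $\pi$ and $V$. That descent does work (on elementary tensors $b\otimes\xi\mapsto \bar b\otimes\bar\xi$ is isometric for the Stinespring inner product precisely because $\widetilde{T_c}$ intertwines the conjugations), but it is essentially a proof of the real Stinespring theorem, i.e.\ you are re-deriving \cite[Theorem 4.3]{ROnr}. One point to tighten: a complex Arveson extension of $T_c$ chosen arbitrarily need not commute with the conjugations, and your fix --- identifying $\widetilde{T_c}$ with $(\tilde T)_c$ --- presupposes the real extension $\tilde T:B\to B(H)$, which is the first assertion of the theorem. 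This is resolved either by citing \cite[Proposition 4.2]{ROnr} outright (as the paper does) or by the averaging trick $S\mapsto \frac12(S+\iota\circ S\circ\iota)$ applied to an arbitrary complex extension $S$, which preserves complete positivity and the property of extending the conjugation-compatible map on $(A+A^*)_c$; you gesture at this but should make it explicit. The trade-off is clear: your route is self-contained modulo only the complex theory, while the paper's route is shorter by outsourcing the real Arveson and Stinespring steps to the existing real operator space literature. Also take care in the nonunital case to route the extension through $A^{**}$ as in Proposition \ref{rcps}, so that the value $\norm{\hat T(1)}=\norm{T}$ controlling $\norm{V}^2$ is available.
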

 
\begin{proof} By Proposition \ref{rcps} $T$ is completely bounded, with $\| T \|_{\rm cb} = \| T \|$.   If $A$ is unital then by Proposition \ref{rcps}
$T$ has a  unital completely positive extension to $A+ A^*$, and we may extend further by \cite[Proposition 4.2]{ROnr} to a 
 selfadjoint  completely positive map $\tilde{T} : B \to B(H)$, of cb norm $\norm{T(1)}$. 
If $A$ is nonunital let $W = A^{**}$.
 By the proof of Proposition \ref{rcps}
 the canonical extension $u = \widetilde{\hat{T}}$ of $\hat{T}$ to $W + W^*$ is selfadjoint and completely positive, and has the same cb norm.
 We may extend further by \cite[Proposition 4.2]{ROnr} to a 
 selfadjoint  completely positive map $B^{**} \to B(H)$, of cb norm $\norm{\hat{T}(1)}$.    Let $\tilde{T}$ be the restriction to $B$. 
 
 By Theorem 4.3 in \cite{ROnr}, there is a $*$-representation $\pi:B\to B(K)$ where $K$ is a real Hilbert space and bounded operator $V\in B(H,K)$ such that 
$\tilde{T}(a)=V^* \pi(a) V$
for all $a\in B$, and $\norm{V}^2 = \norm{\hat{T}(1)} = \| T \|_{\rm cb} = \| T \|$.  \end{proof}

\begin{corollary} \label{nrp2}   A real positive linear functional on a unital real subspace or  approximately unital real 
Jordan subalgebra of a  real $C^*$-algebra
$B$, extends to a positive selfadjoint functional on $B$ with the same norm.  \end{corollary}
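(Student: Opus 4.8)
The plan is to recognize the functional as a map into $B(H)$ for a one-dimensional real Hilbert space $H$ and to invoke the Stinespring/Arveson result Theorem \ref{Sti}; the one piece of real content is that for a \emph{functional} the hypothesis ``real positive'' can be upgraded for free to ``real completely positive'' (RCP), which is precisely what Theorem \ref{Sti} requires. So first I would prove this upgrade. Let $\varphi : A \to \Rdb$ be real positive, fix $n$, and take a real positive $z = [a_{ij}] \in M_n(A)$, i.e.\ $z + z^* \geq 0$ in $M_n(B)$; I must show $C := \varphi_n(z) + \varphi_n(z)^* \geq 0$ in $M_n(\Rdb)$. Since $C = [\,\varphi(a_{ij}) + \varphi(a_{ji})\,]$ is \emph{symmetric}, it suffices (unlike for a general real operator, cf.\ the cautionary remarks in the introduction) to check that $\xi^T C \xi \geq 0$ for every $\xi = (\xi_1, \ldots, \xi_n) \in \Rdb^n$. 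Writing $R = [\, \xi_1 I, \ldots, \xi_n I \,]$ for the scalar row and $a = R z R^* = \sum_{ij} \xi_i \xi_j a_{ij}$, I note that $a \in A$ (a real linear combination of the $a_{ij}$, so no products are needed, which covers the Jordan case) and that $a + a^* = R(z + z^*)R^* \geq 0$. Hence $a$ is real positive in $A$, so $\varphi(a) \geq 0$, and since $\xi^T C \xi = 2\varphi(a)$ this gives $C \geq 0$. Thus $\varphi$ is RCP; the one-dimensionality of the range is exactly what keeps the compression $RzR^*$ inside $A$.

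With $\varphi$ now known to be RCP, I would view it as an RCP map into $B(H) = \Rdb$ with $\dim H = 1$ and apply Theorem \ref{Sti}. This produces an extension of Stinespring form $\tilde\varphi(b) = V^* \pi(b) V$ for a $*$-representation $\pi : B \to B(K)$ and a contraction $V \in B(H,K)$, with $\| \tilde\varphi \|_{\rm cb} = \| \varphi \|$. Such a $\tilde\varphi$ is a positive functional, and it is selfadjoint since $\tilde\varphi(b^*) = V^* \pi(b)^* V = \tilde\varphi(b)^*$; so $\tilde\varphi$ is the desired positive selfadjoint extension. Finally, as $\tilde\varphi$ extends $\varphi$ we get $\| \varphi \| \leq \| \tilde\varphi \| \leq \| \tilde\varphi \|_{\rm cb} = \| \varphi \|$, whence the norms coincide.

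The only genuine obstacle is the first paragraph, and it is mild. The temptation would be to prove the corollary by passing to $\varphi_c$ on $A_c$ and quoting the complex theory, but (as the introduction warns) a real positive functional need not extend to a real positive functional on a complexification, so that route can fail. The vector-testing argument sidesteps this by establishing complete positivity directly, after which any appeal to the complexification is harmless, since RCP does pass to $A_c$ by Proposition \ref{rcps}.
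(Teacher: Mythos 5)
Your proof is correct and follows essentially the same route as the paper, which derives the corollary from Theorem \ref{Sti} together with the fact (Lemma \ref{sfun}) that a real positive functional is automatically RCP. Your direct verification of that upgrade, compressing $z$ to $RzR^{*}\in A$ and noting that the matrix $\varphi_n(z)+\varphi_n(z)^{*}$ is symmetric so the vector test suffices, is exactly the $n=1$ case of the compression argument the paper uses in Lemma \ref{inMn}, so the only difference from the paper's intended proof is which of its own lemmas supplies the RCP step.
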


This follows e.g.\ from the last theorem and Lemma \ref{sfun} or can be seen more directly e.g.\ as in the proof of Lemma \ref{sfun}.
Indeed the functionals in the last result are just the positive multiples of {\em states}.

Let $X$ and $Y$ be operator spaces. If $T:X\to Y$ is a completely bounded, then $\| T \|_{\rm cb} = \| T_c \|_{\rm cb}$ by 
\cite[Theorem 2.1]{RComp}.   
However this is not true at the `Banach level', if $T : X\to Y$ is contractive, then $T_c$ may not be contractive. This depends on the operator space structures that are given to $X$ and $Y$, as we shall now see. 

\begin{example}  \label{wl21}  Let $X$ and $Y$ be $l^1_2(\Rdb)$ with the maximal and minimal operator space structures from \cite{Sharma} respectively,  and 
let $T : X \to Y$ be the identity map, a complete contraction.   One obtains a complete contraction $T_c : X_c \to Y_c$.  One can easily show that 
$Y_c$ may be identified completely isometrically with the two dimensional complex $C^*$-algebra 
$l^\infty_2(\Cdb)$ (since $l^1_2(\Rdb) \cong l^\infty_2(\Rdb)$ isometrically).  
On the other hand,  
$X_c$ is $l^1_2(\Cdb)$ with the maximal operator space structure
(which is known to equal its minimal operator space structure).
To see this note that by Propositions 2.6 and 2.3 in \cite{Sharma}, and by the fact above, we have 
that $({\rm Max}(l^1_2(\Rdb)))_c$ equals
$$(({\rm Min}(l^\infty_2(\Rdb)))^*)_c = (({\rm Min}(l^\infty_2(\Rdb)))_c)^* = l^\infty_2(\Cdb)^* = {\rm Max}(l^1_2(\Cdb)).$$ 
We also used the duality of Min and Max for complex operator spaces \cite[Section 1.4]{BLM}. 

Thus $T_c$ cannot be an isometry or complete isometry, since it is well known that 
$l^1_2(\Cdb)$ and $l^\infty_2(\Cdb)$ are not isometrically isomorphic.      Indeed $(T^{-1})_c$ cannot be a contraction, even though $u = T^{-1}$ is an isometry. 
 \end{example} 

\begin{proposition} \label{wtex} There exist real unital operator algebras $A$ and $B$ with operator algebra complexifications  $A_c$ and $B_c$, and a contractive (even isometric) unital homomorphism $\theta : A \to B$ whose complexification $\theta_c: A_c \to B_c$ is not contractive. 
\end{proposition}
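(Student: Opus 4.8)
The plan is to manufacture the desired example from the operator space phenomenon of Example \ref{wl21} by passing to trivial (square-zero) unitizations. For a real operator space $E \subset B(K)$ I would set
\[
A_E = \left\{ \begin{bmatrix} \lambda I_K & x \\ 0 & \lambda I_K \end{bmatrix} : \lambda \in \Rdb, \ x \in E \right\} \subset M_2(B(K)),
\]
a real unital operator algebra in which $E$ sits completely isometrically (as the strictly upper triangular corner) and squares to zero. The two algebras are $A = A_Y$ and $B = A_X$, where $X = {\rm Max}(l^1_2(\Rdb))$ and $Y = {\rm Min}(l^1_2(\Rdb))$ are the operator spaces of Example \ref{wl21} (which share the underlying Banach space $l^1_2(\Rdb)$), and $\theta : A \to B$ is the identity map $\lambda 1 + v \mapsto \lambda 1 + v$ on $\Rdb \oplus l^1_2(\Rdb)$, which realizes the map $u$ of that example. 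This is visibly a unital homomorphism, since both products are square-zero and $\theta$ is the identity on coordinates.

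The point that makes $\theta$ an isometry (not merely a contraction) is that the algebra norm on $A_E$ sees only the Banach norm of $E$. Indeed a short computation with $\begin{bmatrix} \lambda I & x \\ 0 & \lambda I \end{bmatrix}^* \begin{bmatrix} \lambda I & x \\ 0 & \lambda I \end{bmatrix}$ shows that $\norm{\begin{bmatrix} \lambda I & x \\ 0 & \lambda I \end{bmatrix}} = \nu(\lambda, \norm{x})$ depends only on $\lambda$ and $\norm{x}$, where $\nu(\lambda, r) = \norm{\begin{bmatrix} \lambda & r \\ 0 & \lambda \end{bmatrix}}$: one maximizes $\norm{\lambda \xi_1 + x \xi_2}^2 + \lambda^2 \norm{\xi_2}^2$ over unit vectors by aligning $\xi_1$ with $x\xi_2$ and pushing $\norm{x \xi_2} \to \norm{x}\,\norm{\xi_2}$, reducing to the scalar $2\times 2$ case. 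Since $X$ and $Y$ carry the same underlying Banach norm, the level-one norms of $A_X$ and $A_Y$ agree, namely $\nu(\lambda, \norm{v}_{l^1})$, so $\theta$ is isometric. Their matrix norms of course differ, reflecting that $X \neq Y$ as operator spaces, but only the Banach level is asserted.

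For the complexifications I would take $A_c = A_{Y_c}$ and $B_c = A_{X_c}$: complexification is compatible with the trivial unitization, since $A_E + i A_E$ is the square-zero unitization of the reasonable operator space complexification $E_c = E + iE$, and the conjugation $z \mapsto \bar z$ on $E_c$ induces a completely isometric conjugation on $A_{E_c}$, identifying it as the operator algebra complexification of $A_E$. By Example \ref{wl21}, $Y_c = l^\infty_2(\Cdb)$ while $X_c = {\rm Max}(l^1_2(\Cdb))$, whose level-one norm is the complex $l^1$-norm. Now $\theta_c$ is again the identity on coordinates, and on the square-zero corner it restricts to the identity map $l^\infty_2(\Cdb) \to l^1_2(\Cdb)$. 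Since this corner embeds isometrically in each unitization and $\norm{(1,1)}_{l^1} = 2 > 1 = \norm{(1,1)}_{l^\infty}$, the element $z = (1,1)$ witnesses $\norm{\theta_c(0 \oplus z)} = 2 > 1 = \norm{0 \oplus z}$; hence $\theta_c$ is not contractive.

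I expect the main obstacle to be the two structural bookkeeping facts rather than any single hard estimate: first, verifying cleanly that the level-one norm of $A_E$ is the function $\nu(\lambda, \norm{x})$ of the Banach norm alone (the one genuine computation, and the source of the real isometry); and second, checking that $A_{E_c}$ really is the operator algebra complexification of $A_E$, i.e.\ that the trivial unitization respects the reasonable complexification and its canonical conjugation. Everything else---that $\theta$ is a unital homomorphism, and that the non-contractivity of the identity $l^\infty_2(\Cdb) \to l^1_2(\Cdb)$ propagates through the isometric corner embedding---is then immediate, and the whole construction is driven by Example \ref{wl21}.
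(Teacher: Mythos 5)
Your construction is essentially the paper's: the paper uses the slightly larger unitization ${\mathcal U}(E)$ of upper-triangular matrices with independent scalar diagonal entries $\alpha I, \beta I$, derives the same norm formula (so the level-one norm depends only on $|\alpha|$, $|\beta|$, $\|x\|$), and concludes that the isometry $u$ of Example \ref{wl21} extends to an isometric unital homomorphism. Your square-zero unitization $A_E$ works just as well, your norm computation is correct, and your identification of $A_{E_c}$ as the operator algebra complexification of $A_E$ via the induced conjugation is the right argument. Your final step is in fact simpler than the paper's: the paper shows that no contractive unital extension to ${\mathcal U}(Y_c)$ exists at all, by extending such a map to a positive selfadjoint map on the Paulsen system and invoking the standard corner estimate; for the proposition as stated, your direct restriction to the isometrically embedded $1$-$2$ corner suffices.

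However, your explicit witness is wrong. The identification $Y_c \cong l^\infty_2(\Cdb)$ in Example \ref{wl21} is effected by the isometry $l^1_2(\Rdb) \cong l^\infty_2(\Rdb)$, $(a,b)\mapsto(a+b,a-b)$, which changes coordinates; in those coordinates $u_c$ is not the coordinatewise identity $l^\infty_2(\Cdb)\to l^1_2(\Cdb)$ but (up to normalization) $(s,t)\mapsto((s+t)/2,(s-t)/2)$. More fundamentally, no real element of $Y_c$ can witness non-contractivity: a reasonable complexification restricts to the original norm on $Y\subset Y_c$, and $u$ is an isometry, so $\|u_c(z)\|=\|z\|$ for every $z\in Y$. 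Your $z=(1,1)$ is real (and its $Y_c$-norm in the original coordinates is $2$, not $1$), so it yields no contradiction. You need a genuinely complex witness --- e.g.\ $(s,t)=(1,i)$ in the diagonalized coordinates, giving $(|1+i|+|1-i|)/2=\sqrt2>1$ --- or, more simply, just quote the conclusion of Example \ref{wl21} that $u_c$ is not contractive, which the paper derives abstractly from the fact that $l^1_2(\Cdb)$ and $l^\infty_2(\Cdb)$ are not isometrically isomorphic. With that repair your proof goes through.
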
 

\begin{proof}   Let $X$ and $Y$ be as in Example  \ref{wl21} above.   
We may view $X \subset B(H)$ and let $B = {\mathcal U}(X)$
be the set of `upper triangular' matrices 
$$a=\begin{bmatrix} 
    \alpha  \, I_H     & x \\
    0     & \beta \, I_H
\end{bmatrix}$$ 
where $\alpha,\beta \in \Rdb$ and $x\in X\subseteq B(H)$. 
Note that ${\mathcal U}(X)$ is a real unital operator algebra, and is a subspace of the 
real Paulsen system ${\mathcal S}(X) = 
{\mathcal U}(X) + {\mathcal U}(X)^*$ (see \cite[Lemma 4.12]{Sharma} and the lines above it).     
We claim that 
${\mathcal U}(X)_c = {\mathcal U}(X_c)$ and ${\mathcal S}(X)_c = {\mathcal S}(X_c)$
(these complexifications are the unique operator space complexifications). 
Indeed this claim  follow easily from the facts that 
$M_2(B(H))_c = M_2(B(H)_c) = M_2(B(H_c))$, 
and ${\mathcal S}(X) \subset {\mathcal S}(X_c) \subset M_2(B(H)_c)$.  
Following the proof of Proposition 2.2.11 in \cite{BLM}, we obtain that 
\begin{equation}\label{eq3}
\|a\|^2=\sup\{(|\alpha|\sqrt{1-t^2}+\|x\|t)^2+|\beta t|^2 : t\in[0,1] \}.\end{equation}
From this equation, we can easily see that 
$$\bigg\| \begin{bmatrix}
    \alpha       & x \\
        0     &  \beta
\end{bmatrix}\bigg\|=
\bigg\|\begin{bmatrix}
   | \alpha |        & \|x\| \\ 
        0     & | \beta |
\end{bmatrix}\bigg\|.$$

Similarly 
$A = {\mathcal U}(Y)$ is an operator algebra.   By the last norm formula the isometry $u : Y \to X$ in Example  \ref{wl21}  extends to an  isometric unital
homomorphism $\theta_u : A \to B$.   However  suppose that 
$\theta_u$ extended to a contractive unital map $r$ on ${\mathcal U}(Y)_c = {\mathcal U}(Y_c)$.
Then $r$ would be real positive, and hence by e.g.\  the proof of \cite[Lemma 2.1]{BNjp}  it would  extend further
 to a positive selfadjoint  unital map on ${\mathcal S}(Y_c)$.   By e.g.\ (1.25) in \cite{BLM}
 this forces the $1$-$2$-corner map $Y_c \to X_c$ to be contractive.    However this map is $u_c$, giving a contradiction.  
\end{proof} 

Many results in the theory of  complex  operator algebras involving completely contractive maps will be almost identical in the real
case.   For example, Corollary 2.3 and  Corollary 4.18 of \cite{BNj} or  
Theorem 2.5 of \cite{BNp} 
concerning completely contractive projections  $P : A \to A$ on an
operator algebra or Jordan operator algebra, will be true in the real case.
This follows quickly by applying the complex case of these results to $P_c$.
Similarly for Banach-Stone theorems characterizing complete isometries between operator algebras or Jordan operator algebras
(such as \cite[Theorem 3.5]{BNjp} (2) (note $C = B$ there if $B$ is also an operator algebra, by (1)) or 
 \cite[Proposition 6.5]{BNp} or \cite[Theorem 4.5.13]{BLM}).      See e.g.\ \cite[Theorem 4.4]{RComp}.
 
   In passing we mention the 
 Kadison-Banach-Stone theorem for real $JC^*$-algebras (see e.g.\ \cite[Theorem 4.8]{IKR} and \cite{CDRV}): A surjective linear map $T : A \to B$ 
 between real $JC^*$-algebras is 
 an isometry if and only if $T$ is a `triple morphism'  (that is, preserves the natural `triple product').
   In addition if these hold then $T$ is 
 a Jordan homomorphism if and only if it is positive.   We sketch a proof of the last assertion: note that 
 by Theorem \ref{contractive-hom}, a contractive Jordan homomorphism is selfadjoint and positive.   Conversely, if $T$ is 
 a 
 positive isometry then so is $T^{**}$.   This uses the Kaplansky density theorem for real $JC^*$-algebras, which 
 may be proved following a standard proof for the complex case of that result.   Then $u = T^{**}(1)$ is positive and  
 also is a partial isometry, indeed is a unitary in $B^{**}$ in the $JW^*$-algebra sense, since 
 $T^{**}$ is a triple morphism.    
 Hence $u = (u^2)^{\frac{1}{2}} = 1$.  The  triple morphism property  then implies that $T$ is a Jordan homomorphism.
 We do not know if there is a variant of the Banach-Stone theorems above or  in \cite{BNjp} for surjective isometries 
 or surjective real positive isometries between e.g.\ unital real Jordan operator algebras.

Finally we mention some results on the $C^*$-envelope and injective envelope, some of 
benefitted from discussions with Mehrdad Kalantar and which 
we hope to present elsewhere.    There is a difficulty here that we overcome which is
 related to injective envelopes of dynamical systems. 
For the $C^*$-envelope and injective envelope in the complex case we refer to 
 \cite[Chapter 15]{Pau} or \cite[Chapter 4]{BLM}, or the 
papers of Hamana and Ruan referenced there.
 A preliminary study of the injective envelope and $C^*$-envelope in the real case may be found in \cite{Sharma}.   Using notation from those 
sources we are able to prove:

\begin{theorem} \label{ijco}      Let $A$ be  a unital real operator space or operator system, or if $A$ is an approximately unital real  operator algebra (or Jordan operator algebra).  
Then  $I(A)_c = I(A_c)$.   Also,  $I(A)$ is a unital  real $C^*$-subalgebra of $I(A_c)$, 
and if $C^*_e(A)$ is the $C^*$-subalgebra of $I(A)$ generated by $A$
then $C^*_e(A)_c = C^*_e(A_c)$.  
\end{theorem}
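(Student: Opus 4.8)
The plan is to identify $I(A)_c$ with the injective envelope $I(A_c)$ by checking the defining properties of an injective envelope for the extension $A_c \subseteq I(A)_c$, and then to read off the $C^*$-statements from the canonical conjugation $\iota : I(A)_c \to I(A)_c$, $u + iv \mapsto u - iv$, whose fixed-point set is $I(A)$. First I would record that complexification preserves injectivity: if $R \subseteq B(H)$ is a real injective operator system there is a real unital completely positive projection $P : B(H) \to R$, and by Lemma \ref{lemos} its complexification $P_c : B(H_c) = B(H)_c \to R_c$ is a complex unital completely positive idempotent onto $R_c$, whence $R_c$ is complex-injective. Applying this with $R = I(A)$ shows that $I(A)_c$ is an injective complex operator system containing $A_c$. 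The real injective-envelope facts I shall need (existence, rigidity, and the Choi--Effros $C^*$-structure), modelled on Hamana's theory as in \cite[Chapter 15]{Pau} and \cite[Chapter 4]{BLM}, I take from the real treatment in \cite{Sharma}.

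The crux is to show that $A_c \subseteq I(A)_c$ is a \emph{rigid} extension, i.e.\ that the only complex unital completely positive map $\phi : I(A)_c \to I(A)_c$ fixing $A_c$ is the identity; granting this, $I(A)_c$ is injective and rigid over $A_c$, hence is canonically $I(A_c)$, giving the first assertion. Here I would exploit the $\Zdb_2$-action furnished by $\iota$. Given such a $\phi$, the map $\iota \phi \iota$ is again complex unital completely positive and fixes $A_c$ (since $\iota$ is a conjugate-linear complete order automorphism fixing $A_c$), so the ``commuting part'' $\phi_1 = \tfrac12(\phi + \iota\phi\iota)$ is unital completely positive, commutes with $\iota$, and fixes $A_c$. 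As $\phi_1$ preserves the fixed-point system $I(A)$, its restriction is a real unital completely positive self-map of $I(A)$ fixing $A$, which by rigidity of the real injective envelope is the identity; being complex-linear, $\phi_1 = \mathrm{id}$ on all of $I(A)_c$. Thus $\phi = \mathrm{id} + \phi_2$, where $\phi_2 = \tfrac12(\phi - \iota\phi\iota)$ vanishes on $A_c$ and anticommutes with $\iota$ (so $\iota\phi_2\iota = -\phi_2$).

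It remains to kill $\phi_2$, which I expect to be the main obstacle: this is exactly the point where passage from the fixed points to the whole dynamical system could fail. I would first apply the conclusion of the previous paragraph to the unital completely positive map $\phi\,\overline{\phi} = (\mathrm{id}+\phi_2)(\mathrm{id}-\phi_2) = \mathrm{id} - \phi_2^2$, where $\overline{\phi} = \iota\phi\iota = \mathrm{id}-\phi_2$ is unital completely positive; since $\mathrm{id}-\phi_2^2$ commutes with $\iota$ and fixes $A_c$, rigidity of the real part forces $\phi_2^2 = 0$. Consequently $\phi^k = \mathrm{id} + k\phi_2$ is unital completely positive, hence contractive, for every $k \in \Ndb$, so $\| k\phi_2 \| = \| \phi^k - \mathrm{id} \| \leq \| \phi^k \| + \| \mathrm{id} \| \leq 2$ for all $k$; this forces $\phi_2 = 0$, so $\phi = \mathrm{id}$. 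This establishes rigidity and therefore $I(A)_c = I(A_c)$.

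Finally, the $C^*$-statements follow from $\iota$. Since $\iota$ is a conjugate-linear $*$-automorphism of the $C^*$-algebra $I(A_c)$ fixing $1$, its fixed-point set $I(A)$ is a unital real $C^*$-subalgebra of $I(A_c)$, and the Choi--Effros product on the real injective envelope agrees with the one inherited here, both being transported along the canonical identification from the minimal projection. For the $C^*$-envelope, $C^*_e(A)_c = C^*_e(A) + i\,C^*_e(A)$ is a complex $C^*$-subalgebra of $I(A_c)$ containing $A_c$, so $C^*_e(A_c) \subseteq C^*_e(A)_c$; conversely $C^*_e(A_c)$ is $\iota$-invariant (being generated by the $\iota$-invariant set $A_c$), so its fixed points form a real $C^*$-subalgebra of $I(A)$ containing $A$, hence containing $C^*_e(A)$, and complexifying gives $C^*_e(A)_c \subseteq C^*_e(A_c)$. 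For the operator-algebra and approximately-unital cases I would reduce to the operator-system case in the standard way, passing to $A + A^*$ (or the real Paulsen system) and, in the nonunital case, to the unitization.
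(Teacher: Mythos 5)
Your argument is correct, but note that the paper itself does not prove Theorem \ref{ijco}: the authors explicitly defer the proof to forthcoming work with Kalantar, saying only that it is ``related to injective envelopes of dynamical systems'' and that Hamana's $G$-injective envelope with $G = \Zdb_2$ ``was the inspiration for the last proof.'' Your proposal is precisely that $\Zdb_2$-equivariant argument, so you have in effect supplied the proof the paper omits. The two key points are exactly where the real difficulty lives, and you handle both: (a) injectivity passes to the complexification via Lemma \ref{lemos} applied to a minimal $A$-projection; and (b) rigidity of $A_c \subseteq I(A)_c$ is reduced to real rigidity by splitting a UCP map $\phi$ fixing $A_c$ into its $\iota$-commuting and $\iota$-anticommuting parts, killing the first by real rigidity, and then killing the second by the nilpotency trick ($\phi\,\iota\phi\iota = \mathrm{id} - \phi_2^2$ commutes with $\iota$ and fixes $A_c$, so $\phi_2^2 = 0$, whence $\phi^k = \mathrm{id} + k\phi_2$ is contractive for all $k$ and $\phi_2 = 0$). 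This is the standard mechanism in Hamana's $G$-equivariant theory and it is sound here. Two points deserve an explicit sentence rather than a wave: you must take the rigidity (not merely essentiality) of the real injective envelope as established in \cite{Sharma}, since that is the input to step (b); and for the claim that $I(A)$ is a $C^*$-subalgebra you should choose the minimal projection $P$ onto $I(A_c)$ to commute with $\iota$ (e.g.\ take $P = (P_0)_c$ for a real minimal projection $P_0$, or average $P$ with $\iota P \iota$ and invoke rigidity), so that $\iota$ is multiplicative for the resulting Choi--Effros product. With those provisos the $C^*$-envelope statement follows from the $\iota$-invariance argument exactly as you give it, and the reduction of the operator space/algebra cases to the operator system case via the Paulsen system and the unitization (using $(A^1)_c = (A_c)^1$ from Corollary \ref{uniqueJoaunitization} and $X_c + X_c^* = (X+X^*)_c$ from Section \ref{Sec2}) is the expected one.
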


In this result, $C^*_e({\mathcal S})$ has the universal  property of the $C^*$-envelope:  given any unital complete isometry 
 $j :  {\mathcal S} \to D$ into a real $C^*$-algebra $D$ such that $j({\mathcal S})$ generates $D$ as a real $C^*$-algebra,
 there exists a $*$-epimorphism $\pi : D \to C^*_e({\mathcal S})$  such that $\pi \circ j$ is the canonical inclusion of ${\mathcal S}$ in 
 $C^*_e({\mathcal S})$.    
 
There is a result  analogous to $I({\mathcal S})_c = I({\mathcal S}_c)$ in the context of Hamana's $G$-injective envelope \cite{Hamiecds,Hamiods}.
Namely that $I_G({\mathcal S}) = I({\mathcal S})$ for a finite group
$G$ and an operator system ${\mathcal S}$ which is a  $G$-module  in the sense of Hamana \cite{Hamiecds,Hamiods}.  
A similar result holds for the $G$-$C^*$-envelope.   The case  of this where $G = \Zdb_2$ was the inspiration
for the last proof.    We hope to present this elsewhere in work with Mehrdad Kalantar and a graduate student.

\section{Unitization (Meyer's theorem)} \label{uanf} 

In \cite[Theorem 3.5]{Sharma} a real variant of Meyer's unitization theorem was proved for completely  contractive homomorphisms
on real operator algebras.
Namely any completely  contractive real linear homomorphism $A \to B(K)$ on a subalgebra $A$ of $B(H)$ not containing $I_H$,
extends to a unital  completely  contractive real linear homomorphism $A + \Rdb I_H \to B(K)$.   
This implies that the unitization of a real operator algebra is uniquely defined up to 
completely isometric algebra  isomorphism.   
The variant of Meyer's theorem for contractive $\Cdb$-linear homomorphisms on complex Jordan operator algebras
was noted in \cite{BWj}.    However it is more difficult to prove the real version of the latter result, and we turn to this next.

\begin{lemma}\label{aboveMeyer1} Let $A\subseteq B(H)$ be a real (Jordan) operator algebra 
and $A_c\subseteq B(H_c)$ be its complexification where $H$ is a real Hilbert space. Assume that $I_{H}\notin A$. 
Then for $a,b\in A$ and $\lambda\in \Cdb$, we have 
$$|\lambda|\leq \|(a+ib)+\lambda I_H\|.$$
\end{lemma}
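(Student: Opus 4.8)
The plan is to deduce this inequality from its already-established complex counterpart, this being a prototypical instance where passing to the complexification succeeds (in contrast to the full real Meyer theorem proved later in this section, which genuinely requires the Cayley transform). First I would record that $A_c = A + iA$ is a complex (Jordan) operator algebra sitting inside $B(H)_c = B(H_c)$, and that the element in question already lives in the complex unitization $A_c + \Cdb\, I_{H_c}$: indeed $a+ib \in A_c$, while $I_H$, regarded as an operator on $H_c$, is exactly $I_{H_c}$, so that $(a+ib) + \lambda I_H = (a+ib) + \lambda I_{H_c}$.

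The one point that actually uses the hypothesis is that $I_{H_c} \notin A_c$. Here I would invoke the uniqueness of the decomposition $T = T_1 + i T_2$ of an operator $T \in B(H_c) = B(H) + i B(H)$ into its real and imaginary parts $T_1, T_2 \in B(H)$: an operator belongs to $A_c$ precisely when both $T_1, T_2 \in A$. Since $I_{H_c}$ has real part $I_H$ and imaginary part $0$, membership $I_{H_c} \in A_c$ would force $I_H \in A$, contrary to assumption. Hence $I_{H_c} \notin A_c$.

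With this in hand I would simply apply the complex version of the inequality to $C = A_c \subseteq B(H_c)$: for a complex (Jordan) operator algebra $C$ not containing its ambient identity, one has $|\lambda| \leq \|c + \lambda I\|$ for all $c \in C$ and $\lambda \in \Cdb$. In the associative case this is the standard lemma underlying Meyer's theorem (see \cite{BLM}), and in the Jordan case it follows from (indeed is a standard ingredient of) the complex Jordan version of Meyer's theorem recorded in \cite{BWj}. Taking $c = a+ib$ then yields $|\lambda| \leq \|(a+ib) + \lambda I_{H_c}\| = \|(a+ib) + \lambda I_H\|$, as desired.

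The proof has essentially no hard step: the only substantive item is the bookkeeping in the second paragraph confirming $I_{H_c} \notin A_c$, which is exactly where the hypothesis $I_H \notin A$ is consumed. If one preferred an argument internal to the real picture, one could instead bound $\|(a+ib)+\lambda I_H\|$ below by the spectral radius of $(a+ib)+\lambda I_{H_c}$ and observe that $0$ lies in the spectrum of $a+ib$ relative to the unitization, since invertibility of $a+ib$ there would place $I_{H_c}$ in $A_c$; but in the Jordan case this route requires more care with the notion of invertibility, so complexification is the cleaner path.
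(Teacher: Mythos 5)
Your proposal is correct and follows essentially the same route as the paper: pass to the complexification, observe that $I_{H_c}\notin A_c$ (since its real part $I_H$ is not in $A$), and invoke the complex-scalar version of the inequality. The only cosmetic difference is that the paper first replaces $A_c$ by the closed associative algebra generated by $a+ib$ so that the associative lemma \cite[Lemma 2.1.12]{BLM} suffices even in the Jordan case, whereas you cite a Jordan version for that case; both are fine.
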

\begin{proof}  We may replace $A_c$ by the closed algebra generated by $a+ib$.   Then 
this follows from  \cite[Lemma 2.1.12]{BLM}. \end{proof}

\begin{theorem}[Meyer type unitization]\label{Meyer-Real-Unique} Let $A$ be a real subalgebra (resp.\ Jordan subalgebra) 
of $B(H)$, and assume that $I_H \notin A$. Let $\pi: A\to B(K)$ be a contractive homomorphism 
(resp.\ Jordan homomorphism) for a real Hilbert space $K$. Let $A^1= span_{\Rdb}\{A, I_H\} \subseteq B(H)$ and define $\pi^o: A^1\to B(K)$ by $\pi^o(a+\lambda I_H)=\pi(a)+\lambda I_K$. Then $\pi^0$ is a contractive homomorphism (resp.\ contractive Jordan homomorphism).
\end{theorem}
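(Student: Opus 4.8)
The plan is to follow Meyer's original Cayley/Möbius transform argument carried out directly over $\Rdb$, rather than via complexification; complexification is unavailable here, since by Proposition \ref{wtex} a contractive map need not complexify to a contractive one. First I would dispose of the algebraic and well-definedness points: because $I_H \notin A$ the sum $A^1 = A + \Rdb I_H$ is direct (this is also what Lemma \ref{aboveMeyer1} underwrites), so $\pi^o$ is well-defined, and a direct computation using that $I_K$ is a (Jordan) identity shows $\pi^o$ is a homomorphism (resp.\ Jordan homomorphism). For contractivity it suffices, by homogeneity, to prove $\|\pi^o(c)\| \le 1$ for every $c = a + \lambda I_H \in A^1$ with $\|c\| \le 1$. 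By Lemma \ref{aboveMeyer1} we have $|\lambda| \le 1$, and replacing $c$ by $-c$ if necessary we may assume $\lambda \in [0,1]$.

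Next I would record an a priori bound enabling the resolvent manipulations. Writing $c^n = a_n + \lambda^n I_H$ with $a_n \in A$, and using that $\pi^o$ is a (Jordan) homomorphism together with $\|\pi(a_n)\| \le \|a_n\| \le \|c^n\| + |\lambda|^n \le 2$, one gets $\|\pi^o(c)^n\| = \|\pi(a_n) + \lambda^n I_K\| \le 3$ for all $n$. Thus $T := \pi^o(c)$ is power bounded, so its spectral radius is at most $1$; in particular $1/\lambda \notin \mathrm{Sp}(T)$ when $\lambda \in (0,1)$, and $I_K - \lambda T$ is invertible.

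The core is the Möbius transform $m(z) = (z-\lambda)(1 - \lambda z)^{-1}$ with $\lambda \in (0,1)$. Since $\|\lambda c\| < 1$, the element $m(c) = (c - \lambda I_H)(I_H - \lambda c)^{-1} = a\,(I_H - \lambda c)^{-1}$ lies in $A$, because $A$ is an ideal in $A^1$; in the Jordan case $m(c)$ is a power series in the single element $a$ and hence lies in the (associative) subalgebra it generates, so again $m(c) \in A$. A direct real-Hilbert-space computation gives $\|(I_H - \lambda c)\xi\|^2 - \|(c - \lambda I_H)\xi\|^2 = (1-\lambda^2)\,\langle (I_H - c^*c)\xi, \xi\rangle \ge 0$, whence $\|m(c)\| \le 1$ and so $\|\pi(m(c))\| \le 1$. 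Power boundedness makes the Neumann series converge and yields the intertwining $\pi(m(c)) = (T - \lambda I_K)(I_K - \lambda T)^{-1} = m(T)$, where the products are legitimate since $a$ and $c$ (hence their $\pi^o$-images) lie in a commutative, single-generated subalgebra on which $\pi^o$ is multiplicative. Thus $\|m(T)\| \le 1$, and applying the same estimate to the inverse transform $w \mapsto (w+\lambda)(1+\lambda w)^{-1}$ in $B(K)$ gives $\|T\| = \|m^{-1}(m(T))\| \le 1$. The endpoints $\lambda = 0$ (immediate, as then $c \in A$) and $\lambda = 1$ (apply the above to $sc$ and let $s \to 1^-$) complete the proof.

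The main obstacle is exactly the step that complexification cannot supply: making this Cayley/Möbius machinery run in the real and nonassociative setting. The delicate points are keeping the transformed elements inside the closed (Jordan) algebra — handled by $A$ being an ideal in $A^1$ and, in the Jordan case, by everything being a function of the single element $a$ and so living in an associative single-generated subalgebra on which $\pi^o$ is multiplicative — and securing power boundedness first, so that the resolvent series converge before $\|T\| \le 1$ is known. Crucially, the real-scalar pathologies flagged in the introduction do not intervene, since the Möbius estimate is governed by the genuine positivity of the selfadjoint operator $(1-\lambda^2)(I_H - c^*c)$; and because only real $\lambda$ occur, no rotation by a phase is needed (nor is one available), which is the one place a naive transcription of the complex argument would have to be rethought.
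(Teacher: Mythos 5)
Your proof is correct, and although it is recognizably Meyer's conformal-transform argument, the implementation is genuinely different from the paper's. The paper works inside $B(H_c)$: it sends the strict contraction $T=a+\lambda I$ through the Cayley map to a strictly accretive element, renormalizes by $\alpha=(1+\lambda)/(1-\lambda)$ so that $\theta-I\in A$, returns via the inverse Cayley map to a strict contraction lying in $A$, applies $\pi$, and reverses the steps --- quoting \cite[2.1.14]{BLM} for the accretive/contractive correspondence and invoking the principle $A\cap(A_c)^{-1}=A^{-1}$ to keep the inverses real. Your single disk automorphism $m(z)=(z-\lambda)(1-\lambda z)^{-1}$ is in fact exactly the composite of the paper's three maps (apply $w\mapsto (w-1)(w+1)^{-1}$ to $\alpha^{-1}(1+z)(1-z)^{-1}$ and simplify), so both arguments ultimately transform $c$ into the same element $a(I-\lambda c)^{-1}$ of $A$. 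What your route buys is that it never leaves the real scalars: the governing inequality $\|(I-\lambda c)\xi\|^2-\|(c-\lambda I)\xi\|^2=(1-\lambda^2)\langle (I-c^*c)\xi,\xi\rangle\geq 0$ is proved directly on $H$, accretivity never appears, and the preliminary power-boundedness of $\pi^o(c)$ --- which the paper does not need, since it works with strict contractions and extracts invertibility from accretivity --- legitimizes the Neumann series before $\|T\|\leq 1$ is known. The paper's version is shorter on the page because it can cite the complex machinery of \cite{BLM}; yours is more self-contained and makes visible why no real-scalar pathology (of the kind exhibited in Proposition \ref{wtex}) intervenes. The delicate points --- directness of the sum $A+\Rdb I_H$ via Lemma \ref{aboveMeyer1}, membership of $m(c)$ in $A$ through the ideal property (or, in the Jordan case, through everything being a convergent series in powers of the single element $a$), multiplicativity of $\pi^o$ on the commutative closed subalgebra generated by $a$ and $I$, and the endpoints $\lambda=0,1$ --- are all handled correctly.
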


\begin{proof} We follow the proof of Meyer's theorem for a complex operator algebra (see Theorem 2.1.13 in  \cite{BLM}) using the fact that $A$ has a complexification which is a complex operator algebra.     

It is easy to see that $\pi^0$ is a  homomorphism (resp.\ Jordan homomorphism).   To show that it is contractive,  
let $T=a+\lambda I_H \in A^1$ for some $a\in A$ and $\lambda\in \Rdb$ and $\|T\|< 1$.  We may effectively replace $A$ 
by the closed algebra generated by $a$, which is an operator algebra.
   We claim that $\|\pi^\circ(T)\|< 1$. 
We will regard everything as objects inside $B(H_c)$.   In particular we view $A, A^1 = A + \Rdb I_H$, and $B(H)$ as 
 real subalgebras of 
$B(H_c)$, and we view 
$T$ as an operator in $B(H_c)$.   By Lemma \ref{aboveMeyer1}, $|\lambda|<1$.  Since $T$ is strictly contractive, by item (2) 
in 2.1.14 in \cite{BLM} we have that  $(I+T)(I-T)^{-1}$ is strictly accretive. Set $\alpha=(1+\lambda)/(1-\lambda)$. Then $\alpha>0$ and
$$\theta = \frac{1}{\alpha}(I+T)(I-T)^{-1}  = I+\frac{1}{\alpha}\Big( (I+T)(I-T)^{-1}- (I+\lambda)(I-\lambda)^{-1} \Big)$$
is also strictly accretive. 
By the Neumann lemma, $(I-T)^{-1}=\sum_{k=0}^\infty T^k$, which lies
in $A + \Rdb I$.  We may write
$(I+T)(I-T)^{-1}- (I+\lambda)(I-\lambda)^{-1}$ as 
$$ (I-T)^{-1} \Big(  (I+T)(I-\lambda)-(I-T)(I+\lambda)   \Big)(I-\lambda)^{-1} , $$
which equals 
$2(I-T)^{-1} a (I-\lambda)^{-1}
=\frac{2}{1-\lambda}(I-T)^{-1} a.$
Since $A$ is an ideal in  $A + \Rdb I$, we have
$$\theta-I=\alpha^{-1}(I+T)(I-T)^{-1}- (I+\lambda)(I-\lambda)^{-1}\in A .$$
Since $\theta$ is accretive,  $\theta+I$ is invertible.
By the principle $A \cap (A_c)^{-1} = A^{-1}$ mentioned in the introduction, we have
$(\theta+I)^{-1}\in A + \Rdb 1$. So  we must have $(\theta-I)(\theta+I)^{-1}\in A$,
again since $A$ is an ideal of $A + \Rdb 1$.  

 Since $\pi_c^o$ is a unital homomorphism and $\theta+I$ is invertible, $\pi_c^o(\theta+I)=\pi_c^o(\theta)+I$ is invertible and $\pi_c((\theta+I)^{-1})= (\pi_c^o(\theta)+I))^{-1}.$ Thus,
$$\pi_c^o((\theta-I)(\theta+I)^{-1})=(\pi_c^0(\theta)-I)(\pi_c^o(\theta)+I)^{-1}.$$  We will use items (1) and  (2) 
in 2.1.14 in \cite{BLM} several times.  
We know that $\theta$ is strictly accretive, thus $(\theta-I)(\theta+I)^{-1}$ is strictly contractive and is an element in $A \subseteq A_c$. Since $\pi_c^o|_A=\pi$, $\pi^\circ_c((\theta-I)(\theta+I)^{-1})\in B(K).$    Since $\pi$ is a contraction, 
$\| (\pi_c^0(\theta)-I)(\pi_c^o(\theta)+I)^{-1}\|_{B(K_c)}$ equals
$$\| \pi_c^\circ((\theta-I)(\theta+I)^{-1})\|_{B(K_c)}= \|\pi ((\theta-I)(\theta+I)^{-1})\|_{B(K)} < 1.$$
Thus, $\pi_c^o(\theta)$ is strictly accretive in $B(K_c)$. Thus,
$$\alpha \pi_c^0(\theta)=\pi_c^o((I+T)(I-T)^{-1})=(I+\pi_c^o(T))(I-\pi_c^o(T))^{-1}$$
is strictly accretive. Therefore $\pi_c^o(T) = \pi^\circ(T)$ is strictly contractive as desired.
\end{proof}

It now follows that the unitization of a real operator algebra is unique up to isometric isomorphism: 

\begin{theorem} \label{Meyer-Real-Algebra} 
Let $A$ be a real subalgebra (resp.\ Jordan subalgebra) 
of $B(H)$, and assume that $I_H \notin A$.  Let $\pi: A\to B(K)$ be an isometric homomorphism 
(resp.\ isometric Jordan homomorphism) for a real Hilbert space $K$. 
Then the unital homomorphism $\pi^\circ : A^1\to B(K)$ where $\pi^\circ(a+\lambda I_H)=\pi(a)+\lambda I_K$ is an isometric isomorphism 
onto $\pi(A) + \Rdb \, I_H$.  
\end{theorem}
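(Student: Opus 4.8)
The plan is to sandwich $\pi^\circ$ between two contractions produced by the unitization result just established (Theorem \ref{Meyer-Real-Unique}), applying it once to $\pi$ and once to the inverse map. First I would observe that since $\pi$ is isometric it is in particular a contractive (Jordan) homomorphism, so Theorem \ref{Meyer-Real-Unique} applies verbatim and tells us that $\pi^\circ$ is a unital (Jordan) homomorphism which is contractive. This already gives one of the two inequalities, namely $\|\pi(a)+\lambda I_K\| \leq \|a+\lambda I_H\|$ for all $a\in A$ and $\lambda\in\Rdb$, and by its very definition $\pi^\circ$ maps $A^1$ \emph{onto} $\pi(A)+\Rdb I_K$. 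It therefore remains only to prove the reverse inequality.

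For the reverse inequality I would run Meyer's theorem a second time, on the inverse. Since $\pi$ is an isometry and $A$ is complete, $\pi(A)$ is a closed real (Jordan) subalgebra of $B(K)$, and $\psi := \pi^{-1} : \pi(A)\to A\subseteq B(H)$ is an isometric (Jordan) homomorphism. Applying Theorem \ref{Meyer-Real-Unique} with the roles of $(A,H,K)$ played by $(\pi(A),K,H)$ would yield a contractive (Jordan) homomorphism $\psi^\circ : \pi(A)+\Rdb I_K \to B(H)$ with $\psi^\circ(\pi(a)+\lambda I_K)=a+\lambda I_H$. By construction $\psi^\circ\circ\pi^\circ=\mathrm{id}_{A^1}$, so $\|a+\lambda I_H\|\leq\|\pi(a)+\lambda I_K\|$. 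Combining this with the first paragraph shows $\pi^\circ$ is isometric; being a surjective isometric (Jordan) homomorphism onto $\pi(A)+\Rdb I_K$, it is the desired isometric isomorphism (here $\pi(A)+\Rdb I_H$ in the statement should read $\pi(A)+\Rdb I_K$).

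The step I expect to be the main obstacle is the legitimacy of applying Theorem \ref{Meyer-Real-Unique} to $\psi$: that result requires the ambient identity to lie outside the algebra, i.e.\ $I_K\notin\pi(A)$, precisely so that $\pi(A)+\Rdb I_K$ is an internal direct sum and $\psi^\circ$ is well defined. This is the exact translation to the target of the standing hypothesis $I_H\notin A$, but unlike $I_H\notin A$ it is not given, and it genuinely needs checking. When $A$ is non-unital it is automatic, since then $\pi(A)\cong A$ is non-unital and cannot contain $I_K$ (which would serve as its identity). The delicate situation is when $A$ carries its own identity $e\neq I_H$: the identity of $\pi(A)$ is the projection $\pi(e)$, and $I_K\in\pi(A)$ exactly when $\pi(e)=I_K$. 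Thus the crux is to guarantee $\pi(e)\neq I_K$, equivalently that $\pi$ does not carry the identity of $A$ onto the ambient identity of $B(K)$; I would record this proviso explicitly, since without it $\pi^\circ$ can acquire the nonzero kernel $\Rdb(I_H-e)$ and fail to be injective. Under this proviso the two-sided contractivity argument above closes the proof.
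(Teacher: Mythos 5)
Your proof is correct and is precisely the argument the paper intends: the paper simply cites \cite[Corollary 2.1.15]{BLM}, whose proof is exactly this two-sided application of the unitization theorem (Theorem \ref{Meyer-Real-Unique}) to $\pi$ and to $\pi^{-1}$, yielding the two opposite inequalities. Your proviso that $I_K \notin \pi(A)$ is genuinely needed and is not a mere technicality --- for instance $A = \Rdb p$ for a proper projection $p \in B(H)$ with $\pi(\lambda p) = \lambda I_{pH}$ gives a non-injective $\pi^\circ$ --- but this hypothesis is stated on both representations in the cited corollary of \cite{BLM} and is automatic in the situation where the theorem is actually invoked (the nonunital case of Corollary \ref{uniqueJoaunitization}), so flagging it as you did is exactly the right thing to do.
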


\begin{proof}  This follows from Theorem \ref{Meyer-Real-Unique} as in \cite[Corollary 2.1.15]{BLM}.  \end{proof}

\begin{corollary} \label{uniqueJoaunitization} The unitization $A^1$ of a  Jordan operator algebra 
is unique up to isometric Jordan isomorphism. In addition, $(A^1)_c=(A_c)^1$ isometrically isomorphically.
\end{corollary}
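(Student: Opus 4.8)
The plan is to read off the uniqueness of $A^1$ directly from the Meyer type Theorem \ref{Meyer-Real-Algebra}, and then to obtain $(A^1)_c = (A_c)^1$ by realizing both complex algebras as the same concrete subalgebra of $B(H_c)$. For the uniqueness: if $A$ is unital then $A^1 = A$ and there is nothing to prove, so suppose $A$ is non-unital and is presented as a real Jordan subalgebra of both $B(H)$ and $B(K)$. Since $A$ has no identity, we automatically have $I_H \notin A$ and $I_K \notin A$. Letting $\pi : A \to B(K)$ be the isometric Jordan isomorphism identifying the two copies of $A$ (a composite of one embedding with the inverse of the other), Theorem \ref{Meyer-Real-Algebra} asserts that $\pi^\circ : A + \Rdb I_H \to \pi(A) + \Rdb I_K$, $a + \lambda I_H \mapsto \pi(a) + \lambda I_K$, is an isometric Jordan isomorphism. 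Hence the two candidate unitizations are isometrically Jordan isomorphic, exactly as in \cite[Corollary 2.1.15]{BLM}; this step requires no new ideas beyond Theorem \ref{Meyer-Real-Algebra}.

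For the identity $(A^1)_c = (A_c)^1$, I would fix a concrete representation $A \subseteq B(H)$ and compute inside $B(H_c) = B(H)_c$. First note that $A$ is non-unital if and only if $A_c$ is, since the real part of an identity of $A_c$ would be an identity of $A$; in the non-unital case this gives $I_{H_c} \notin A_c$. By the uniqueness of the (Jordan) operator algebra complexification, which takes the operator space structure into account (\cite[Theorem 2.1]{RComp} and the introduction), $(A^1)_c$ is realized as the concrete algebra $A^1 + iA^1 \subseteq B(H_c)$, and a direct computation gives
$$A^1 + iA^1 = (A + \Rdb I_H) + i(A + \Rdb I_H) = A_c + \Cdb I_{H_c}.$$
On the other hand, by the complex Meyer theorem and the uniqueness of the complex unitization applied to $A_c \subseteq B(H_c)$, the unitization $(A_c)^1$ is realized as $A_c + \Cdb I_{H_c} \subseteq B(H_c)$. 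Since the two sides are the identical subalgebra of $B(H_c)$ carrying the operator norm, the identity map is the desired isometric (indeed completely isometric) Jordan isomorphism. The unital case is immediate from $A^1 = A$ and $(A_c)^1 = A_c$.

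The step I would be most careful about is that this equality genuinely holds at the level of the \emph{operator algebra} complexification, not merely for Banach algebras. Indeed Proposition \ref{wtex} shows that the Banach-algebra complexification of a real operator algebra is not unique, so one must keep the operator space structure in play throughout and use \cite[Theorem 2.1]{RComp} to pin down $(A^1)_c$ as the concrete algebra $A^1 + iA^1$. Once both unitizations are exhibited as the single concrete subalgebra $A_c + \Cdb I_{H_c}$ of $B(H_c)$, equipped with its operator norm, the conclusion follows with no further computation.
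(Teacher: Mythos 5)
Your handling of the nonunital case and of the identity $(A^1)_c=(A_c)^1$ is essentially the paper's own argument: the first assertion is read off from Theorem \ref{Meyer-Real-Algebra}, and the second by realizing $(A^1)_c$ as $A^1+iA^1=A_c+\Cdb I_{H_c}$ and invoking the uniqueness of the unitization of the complex Jordan operator algebra $A_c$ from \cite[Corollary 2.5]{BWj}; your insistence on pinning $(A^1)_c$ down via the operator space complexification of \cite{RComp}, rather than a Banach-algebra complexification, is exactly the point the paper is careful about.

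The gap is your dismissal of the unital case with ``$A^1=A$ and there is nothing to prove.'' In this paper (as in \cite[Corollary 2.5]{BWj}) the unitization of a unital Jordan operator algebra $A$ with identity $e$ adjoins an \emph{external} identity, so $A^1=A+\Rdb 1$ with $1\notin A$ and $e\neq 1$; the paper's own proof explicitly treats this case, and Theorem \ref{cepro} even speaks of ``a unitization of $A$ with $A\neq A^1$.'' The content of the corollary here is that the resulting norm on $A\oplus\Rdb$ is independent of all choices, and this requires genuine work: one first shows that $e$ is selfadjoint in any unital isometric representation of $A^1$ (via the statement about $\Delta(A)$ in Lemma \ref{ApA}, which rests on Theorem \ref{contractive-hom}), hence $e$ is a central projection in $C^*(A^1)$, which forces
$$\|a+\lambda 1\|=\max\{\|e(a+\lambda 1)\|,\|(1-e)(a+\lambda 1)\|\}=\max\{\|a+\lambda e\|,|\lambda|\},\qquad a\in A,\ \lambda\in\Rdb,$$
a formula visibly independent of the representation. (One could instead try to invoke Theorem \ref{Meyer-Real-Algebra} directly, since its hypothesis is only $I_H\notin A$ rather than nonunitality of $A$, but some argument for unital $A$ must appear.) Your complexification step would likewise need this case, namely $(A^1)_c=A_c+\Cdb 1=(A_c)^1$ as external unitizations of the unital algebra $A_c$, though that part goes through by the same appeal to the complex uniqueness result once the unital case of the first assertion is settled.
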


\begin{proof} We follow the proof of Corollary 2.5 in \cite{BWj}. If $A$ is nonunital  then we may assume that $A$ is represented on a Hilbert space $H$ and 
the Jordan operator algebra unitization $A^1$ of $A$ is identified with $A + \Rdb \, I_H$.   Then 
the first assertion 
follows from Theorem \ref{Meyer-Real-Algebra}.   If $A$ is unital and $e$ is the identity of $A$, then $e$ is a 
central projection of a unitization $A^1$.  Also $e$ commutes with $A^*$ (adjoints with respect to  a fixed unital 
 isometric representation of $A^1$; this follows for example since $e$ is selfadjoint in that representation by the statement 
 about $\Delta(A)$ in Lemma \ref{ApA}).
 So  $e$ is central in $C^*(A^1)$.   From this it is easy to see that for 
$a \in A, \lambda \in \Rdb$ we have  $$\|a+\lambda 1\|=\max\{\|e(a+\lambda 1) \|, \|(1-e)(a+\lambda 1)\|\}=\max\{\|a+\lambda e\|, |\lambda|\}  .$$ 

Since a unitization of complex Jordan operator algebra is  unique up to isometric Jordan homomorphism (see  \cite[Corollary 2.5]{BWj}), $(A^1)_c=(A_c)^1$.
\end{proof}

\section{Approximate identities}  \label{ai}  If $A$ is a Jordan  subalgebra  of a $C^*$-algebra $B$  (either real or complex case) 
 then we say that  a net $(e_t)$ in Ball$(A)$  is 
a $B$-{\em relative partial cai} for $A$ if
 $e_t a \to a$ and $a e_t \to a$  for all $a \in A$.  Here we are using the usual
product on $B$, 
which may not give an element in $A$, and may depend on $B$. 
We say that a net $(e_t)$ in Ball$(A)$  is 
a  {\em partial cai} for $A$ if 
for every $C^*$-algebra $B$ containing $A$ as
a Jordan subalgebra,  $e_t a \to a$ and $a e_t \to a$  for all $a \in A$,
using the product 
on $B$.   Note that partial cais are the same as cais if $A$ is an associative operator algebra.
We say that 
$A$ is {\em approximately unital} if it has a partial cai.  
If $A$ is an operator algebra or Jordan operator algebra then we recall that  a net $(e_t)$ in Ball$(A)$  is 
a {\em Jordan cai} or {\em J-cai} for $A$ if  $e_t a + a e_t \to 2a$  for all $a \in A$.  

\begin{lemma} \label{JOA_aprox_unital} Let $A$ be a real Jordan subalgebra of a real $C^*$-algebra $B$. Then 
\begin{enumerate}
    \item $A$ has a $B$-relative partial cai if and only if $A_c$ has a $B_c$-relative partial cai.
    \item $A$ has a J-cai if and only if $A_c$ has a J-cai.
    \item If $A_c$ has a partial cai then $A$ has a partial cai.
\end{enumerate}
\end{lemma}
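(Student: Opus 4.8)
The plan is to treat all three parts with a single device: the canonical conjugation $\iota = \iota_{B_c}\colon x+iy \mapsto x-iy$ (for $x,y\in B$) on the complexification $B_c$. This $\iota$ is a conjugate-linear, isometric, multiplicative $*$-automorphism of $B_c$ which carries $A_c$ onto itself and fixes every point of $A$ (and of $B$). With it in hand, the forward implications of (1) and (2) are immediate: if $(e_t)$ is a $B$-relative partial cai (resp.\ a J-cai) for $A$, then $(e_t)\subseteq {\rm Ball}(A)\subseteq {\rm Ball}(A_c)$, and for $x=a+ib\in A_c$ with $a,b\in A$ one splits $e_t x = e_t a + i\,e_t b$ and $x e_t = a e_t + i\, b e_t$ (resp.\ $e_t\circ x = e_t\circ a + i\,(e_t\circ b)$) and invokes $\Cdb$-linearity together with the assumed convergence on $A$.

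For the converse implications in (1) and (2), the key step is to manufacture a net living in ${\rm Ball}(A)$ out of a net $(f_t)$ in ${\rm Ball}(A_c)$ while preserving the approximation. I would pass to \emph{real parts}: set $g_t = \frac{1}{2}(f_t + \iota(f_t))$. Writing $f_t = u_t + i v_t$ with $u_t,v_t\in A$, this is simply $g_t = u_t \in A$; and since $\iota$ is isometric, $\|g_t\| \le \frac{1}{2}(\|f_t\| + \|\iota(f_t)\|) \le 1$, so $g_t\in {\rm Ball}(A)$. Because $\iota$ is continuous, multiplicative (hence Jordan-multiplicative), and fixes each $a\in A$, applying $\iota$ to the convergence $f_t a\to a$ yields $\iota(f_t)\,a = \iota(f_t a)\to \iota(a)=a$; averaging then gives $g_t a = \frac{1}{2}(f_t a + \iota(f_t)a)\to a$, and similarly $a g_t\to a$ (resp.\ $g_t\circ a\to a$ in the J-cai case). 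This establishes that $(g_t)$ is a $B$-relative partial cai (resp.\ J-cai) for $A$, proving the converses.

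Finally, for (3) I would run the same averaging but check the stronger requirement defining a \emph{partial} cai, which ranges over \emph{all} real $C^*$-algebras containing $A$. Given any real $C^*$-algebra $B'\supseteq A$, its complexification $B'_c$ is a complex $C^*$-algebra containing $A_c$ as a Jordan subalgebra, so a partial cai $(f_t)$ for $A_c$ is in particular a $B'_c$-relative cai; the real parts $g_t=\frac{1}{2}(f_t+\iota(f_t))$ then form a $B'$-relative cai for $A$ by the computation above, and as $B'$ was arbitrary this exhibits $(g_t)$ as a partial cai for $A$. I expect the main obstacle to be not any deep estimate but the two verifications on which everything hinges: that the averaged net genuinely lies in ${\rm Ball}(A)$ (where isometry of $\iota$ is essential, since real parts need not be contractive for an arbitrary complexification norm) and that $\iota$ intertwines the one-sided and Jordan products with their restrictions to $A$ (where multiplicativity and $\iota|_A=\mathrm{id}$ are used). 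The one-directional form of (3) is expected: a general complex $C^*$-algebra containing $A_c$ need not arise as some $B'_c$, so this argument does not reverse.
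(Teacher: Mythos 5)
Your proposal is correct and follows essentially the same route as the paper: the paper's proof also passes to the real part of a $B_c$-relative (or Jordan) cai $(e_t+if_t)$ for $A_c$ and deduces that $(e_t)$ works for $A$, citing a fact from Li's book where you instead verify the contractivity and convergence explicitly via the isometric conjugation $\iota$. Your spelled-out justification (isometry of $\iota$ for the norm bound, multiplicativity and $\iota|_A=\mathrm{id}$ for the convergence) is exactly the content the paper delegates to the reference, and your handling of (3) matches the paper's ``follows from the ideas in (1)''.
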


\begin{proof}  Note that $A_c$ is a complex Jordan subalgebra of the $C^*$-algebra $B_c$.
Any $B-$relative partial cai of $A$ is a $B_c$-relative partial cai of $A_c$. Conversely, if $(e_t+i \, f_t)$ is a $B_c$-partial cai of $A_c$, then  $(e_t)$ is a $B$-partial cai of $A$.    (See the fact in the 
proof of \cite[Proposition 5.2.4]{Li}.) 
Similarly for J-cai's.   Item (3) follows from the ideas in (1).  
\end{proof}

\begin{lemma} \label{jcai}  If $A$ is a real Jordan subalgebra 
of a real $C^*$-algebra $B$, then the following are equivalent:
\begin{itemize} \item [(i)] $A$ has a partial cai.
 \item [(ii)]  $A$ has a $B$-relative  partial cai. 
\item [(iii)]  
$A$ has a J-cai.
\item [(iv)]   $A^{**}$ has an identity $p$ of norm 1 with respect to  the Jordan Arens product  on $A^{**}$,
which coincides on $A^{**}$ with the restriction of the usual  product
in $B^{**}$.     Indeed $p$  is the identity of the von Neumann algebra
$C^*_B(A)^{**}$.  
\end{itemize}  If these 
hold then  any partial cai $(e_t)$ for $A$  
is a cai for $C^*_B(A)$ (and for the associative operator algebra generated by $A$), and every J-cai for $A$ converges weak* to  $p$.    
\end{lemma}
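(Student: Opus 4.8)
The plan is to prove Lemma \ref{jcai} by establishing the cycle of implications among (i)--(iv) and then the final convergence assertions. The overall strategy is to lean heavily on the complexification machinery of Lemma \ref{JOA_aprox_unital}, pushing the hard bidual/von Neumann algebra work through the complex theory of \cite{BWj}, and importing it back to the real case. First I would record the easy implications: (i) $\Rightarrow$ (ii) is immediate since a partial cai is in particular a $B$-relative partial cai (taking the specific $C^*$-algebra $B$), and (ii) $\Rightarrow$ (iii) is trivial since $e_t a + a e_t \to 2a$ follows from $e_t a \to a$ and $a e_t \to a$. The substantive content lies in (iii) $\Rightarrow$ (iv) and (iv) $\Rightarrow$ (i).

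For (iii) $\Rightarrow$ (iv), my approach is to complexify. By Lemma \ref{JOA_aprox_unital}(2), if $A$ has a J-cai then $A_c$ has a J-cai, and $A_c$ is a complex Jordan subalgebra of the $C^*$-algebra $B_c$. Now I invoke the complex version of this result (the corresponding theorem in \cite{BWj}, e.g.\ their analog of (iv)): $A_c^{**}$ has an identity $p$ of norm $1$ for the Jordan Arens product, and $p$ is the identity of the complex von Neumann algebra $C^*_{B_c}(A_c)^{**}$. The key point is that $p$ lies in $A^{**}$ rather than merely $A_c^{**} = (A^{**})_c$. To see this, I would use that the complexification construction is compatible with the conjugation $x + iy \mapsto x - iy$: the identity $p$ is unique (an identity in a Jordan algebra is unique), hence fixed by the conjugate-linear weak* continuous automorphism of $A_c^{**}$ induced by complexification, so $p$ lies in the fixed-point set $A^{**}$. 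Uniqueness of the Jordan identity forces $p = p^*$, and the remaining identifications ($p$ being the identity of $C^*_B(A)^{**}$, and the Jordan Arens product coinciding with the restriction of the $B^{**}$ product) transfer from the complex statement by restricting to fixed points, since $C^*_B(A)_c = C^*_{B_c}(A_c)$ and the Arens products are compatible with complexification.

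For (iv) $\Rightarrow$ (i) I would again complexify: if $A^{**}$ has such an identity $p$, then since $(A^{**})_c = (A_c)^{**}$ and $p$ serves as a Jordan Arens identity there too, the complex version gives that $A_c$ has a partial cai; then Lemma \ref{JOA_aprox_unital}(3) yields that $A$ has a partial cai, closing the cycle. For the final assertions, suppose these equivalent conditions hold and let $(e_t)$ be any partial cai for $A$. To see $(e_t)$ is a cai for $C^*_B(A)$, I would observe that $C^*_B(A)$ is generated as a $C^*$-algebra by $A$ together with the products and adjoints of elements of $A$; since $e_t a \to a$ and $a e_t \to a$ for $a \in A$, a standard approximation argument (together with $\|e_t\| \le 1$ and $e_t \to p$ appropriately) extends the approximate-identity behavior to the generated $C^*$-algebra, and similarly to the associative algebra generated by $A$. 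For weak* convergence of a J-cai to $p$: any J-cai $(e_t)$ has a weak* convergent subnet with limit $q \in \Delta(A^{**})$ of norm $\le 1$; passing to the weak* limit in $e_t \circ a \to a$ shows $q$ is a (left, and by symmetry two-sided) Jordan identity for $A$ acting on $A^{**}$, hence $q = p$ by uniqueness of the identity $p$; since every subnet has a further subnet converging to $p$, the whole net converges weak* to $p$.

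The main obstacle I anticipate is the transfer of the identity $p$ from the complex bidual to the real bidual in (iii) $\Rightarrow$ (iv) --- specifically, verifying rigorously that $p$ is fixed by the complexification conjugation and hence real, and that all the structural identifications ($C^*_B(A)^{**}$, coincidence of Jordan Arens product with the $B^{**}$ product) descend cleanly to fixed points. One must be careful that the weak* continuous conjugate-linear automorphism $x + iy \mapsto x - iy$ on $(A_c)^{**}$ is a genuine Jordan automorphism for the Arens product and that its fixed-point set is exactly $A^{**}$; granting the compatibility of complexification with biduals and Arens products (stated in the introduction and in \cite[Section 4.2]{WTT}), uniqueness of the Jordan identity then does the essential work with little further computation.
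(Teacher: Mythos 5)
Your proposal is correct and follows essentially the same route as the paper: the paper simply defers everything to the complex case (\cite[Lemma 2.6]{BWj}) except for (iv) $\Rightarrow$ (i), where it gives exactly your argument — identify $(A_c)^{**}=(A^{**})_c$, note $p$ is the identity there, obtain a partial cai $(e_t+if_t)$ for $A_c$ from the complex result, and extract $(e_t)$ via Lemma \ref{JOA_aprox_unital}. Your extra fixed-point/conjugation argument for transferring $p$ into $A^{**}$ in (iii) $\Rightarrow$ (iv) is sound and consistent with the paper's complexification philosophy; the only step you gloss over is that $e_t a^*\to a^*$ (needed for the cai of $C^*_B(A)$) requires the $C^*$-identity estimate $\|(1-e_t)a^*\|^2\le 2\|a(1-e_t)a^*\|$ rather than joint continuity of multiplication alone, but this is the standard argument from the complex case being invoked.
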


\begin{proof}   This holds almost exactly as in the complex case \cite[Lemma 2.6]{BWj}.   We just indicate a proof that (iv) implies (i).   Suppose that
$p$ is an identity for $A^{**}$.   Viewing $A$ as with its operator space structure we have that $(A_c)^{**} = (A^{**})_c$ by \cite{RComp}.
Since the canonical map $A_c \to (A^{**})_c$ is a Jordan homomorphism, so is
its weak* continuous extension $(A_c)^{**} \to (A^{**})_c$.   Thus $(A_c)^{**} = (A^{**})_c$ as dual real Jordan operator algebras.
Thus $p$ is the identity of $(A_c)^{**}$.   By the complex case of the present result, 
$A_c$ has a  partial cai $(e_t + i f_t)$, with $e_t, f_t \in A$.  Therefore by the proof of the last lemma $(e_t)$ is a partial cai for $A$.   
\end{proof}

\begin{proposition} \label{coj} Let $A$ be an approximately unital real Jordan operator algebra and let $\pi:A\to B(H)$ be a  contractive Jordan homomorphism. We let $P$ be the projection onto $K=[\pi(A)H]$. Then $\pi(e_t)\to P$ in the weak* (and WOT) topology of $B(H)$ for any J-cai $(e_t)$ for $A$. Moreover, for $a\in A$, we have $\pi(a)=P\pi(a)P,$ and the compression of $\pi$ to $K$ is a contractive Jordan homomorphism. Also, if $(e_t)$ is a partial cai for $A$, then $\pi(e_t)\pi(a)\to \pi(a)$ and $\pi(a)\pi(e_t)\to \pi(a)$. In particular, $\pi(e_t)|_K\to I_K$ SOT in $B(K)$.
\end{proposition}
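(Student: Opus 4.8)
The plan is to pass to the bidual, where the approximate identity becomes an honest unit, and then transport the resulting projection back down to $B(H)$. First I would extend $\pi$ to its (unique) weak* continuous Jordan homomorphism $\tilde{\pi}:A^{**}\to B(H)$; such an extension exists and is again a contractive Jordan homomorphism, by the standard bidual/Arens argument recalled in the introduction (separate weak* continuity of the Jordan product, valid in the real case). By Lemma \ref{jcai} the algebra $A^{**}$ has an identity $p$ of norm $1$, and every J-cai $(e_t)$ converges weak* to $p$. Set $Q=\tilde{\pi}(p)$. Since $\tilde{\pi}$ is a Jordan homomorphism, $Q^2=\tilde{\pi}(p^2)=\tilde{\pi}(p)=Q$ and $\|Q\|\leq\|p\|=1$, and a contractive idempotent on a real Hilbert space is an orthogonal projection (for instance by complexifying and noting the resulting projection on $H_c$ commutes with the conjugation). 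Because $e_t\to p$ weak* and $\tilde{\pi}$ is weak* continuous, $\pi(e_t)=\tilde{\pi}(e_t)\to Q$ weak*; as the $\pi(e_t)$ are contractions this is WOT convergence.

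Next I would identify $Q=P$. Using $p\circ a=a$ and that $\tilde{\pi}$ is a Jordan homomorphism gives $\pi(a)=\tilde{\pi}(p\circ a)=Q\circ\pi(a)=\frac{1}{2}(Q\pi(a)+\pi(a)Q)$ for $a\in A$. A one-line Peirce computation (writing $\pi(a)$ as a block matrix for $H=QH\oplus(I-Q)H$) shows that $x=Q\circ x$ forces $x=QxQ$; hence $\pi(a)=Q\pi(a)Q$. Therefore $\pi(A)H\subseteq QH$, so $K\subseteq QH$ and $P\leq Q$. Conversely each $\pi(e_t)$ has range in $K$, so $P\pi(e_t)=\pi(e_t)$, and passing to the weak* limit gives $PQ=Q$, i.e.\ $Q\leq P$. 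Thus $P=Q$, which proves the first assertion and the identity $\pi(a)=P\pi(a)P$. The compression $\psi(a)=\pi(a)|_K\in B(K)$ is then contractive as a restriction, and is a Jordan homomorphism since $\pi(a)=P\pi(a)P$ makes $\pi(a^2)|_K=\pi(a)^2|_K=(\pi(a)|_K)^2$.

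For the partial cai statements I would upgrade the WOT convergence to the strong operator topology on $K$: for $\xi\in K$ we have $\pi(e_t)\xi\to P\xi=\xi$ weakly, while $\|\pi(e_t)\xi\|\leq\|\xi\|$ and weak lower semicontinuity of the norm gives $\|\xi\|\leq\liminf\|\pi(e_t)\xi\|$; hence $\|\pi(e_t)\xi\|\to\|\xi\|$, and in a real Hilbert space weak convergence together with convergence of norms yields norm convergence. Thus $\pi(e_t)|_K\to I_K$ strongly, which is the final ``in particular'' assertion; combined with $\pi(a)=P\pi(a)P$ and $\pi(a)\xi\in K$ this gives $\pi(e_t)\pi(a)\to\pi(a)$ and $\pi(a)\pi(e_t)\to\pi(a)$ strongly. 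The genuinely associative input of a partial cai enters through the Jordan triple product: since $\pi$ preserves $\{x,y,z\}=\frac{1}{2}(xyz+zyx)$, one has $\pi(e_t)\pi(a)\pi(e_t)=\pi(e_t a e_t)\to\pi(a)$ in norm, using $e_t a e_t\to a$.

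I expect the main obstacle to be the one-sidedness of the convergences $\pi(e_t)\pi(a)\to\pi(a)$ and $\pi(a)\pi(e_t)\to\pi(a)$. A Jordan homomorphism only ``sees'' the symmetric combination $\pi(e_t)\circ\pi(a)$ and the triple product $\pi(e_t)\pi(a)\pi(e_t)$, both controlled by the J-cai/partial cai hypotheses, whereas the one-sided product $\pi(e_t a)$ is not directly accessible through $\pi$; if one wants these convergences in norm rather than strongly, this symmetry is a real obstruction and one must exploit that a partial cai (unlike a mere J-cai) approximates on each side in $B$. This is precisely the step where the partial cai hypothesis, rather than just a J-cai, is needed, and the only place requiring more than formal Jordan manipulation. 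The real-scalar nature causes no additional trouble: the contractive-idempotent-is-a-projection fact, the weak-plus-norm criterion for norm convergence, the bidual Jordan structure, and triple-product preservation all hold verbatim over $\Rdb$, so no appeal to complexification beyond the introductory facts is required.
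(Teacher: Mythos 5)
The paper's own ``proof'' here is a one-line citation to the complex case (Lemma 2.19 of \cite{BWj}), and your bidual argument is essentially that proof written out: pass to $A^{**}$, use Lemma \ref{jcai} to get the identity $p$ and the weak* convergence $e_t\to p$, check that $Q=\tilde{\pi}(p)$ is a contractive idempotent and hence an orthogonal projection, and identify $Q$ with $P$ by the Peirce computation together with $P\pi(e_t)=\pi(e_t)$. That part, the identity $\pi(a)=P\pi(a)P$, the statement about the compression, and the upgrade from WOT to SOT on $K$ via weak lower semicontinuity of the norm are all correct. The only point deserving one more sentence is the existence of the weak*-continuous Jordan homomorphism extension $\tilde{\pi}:A^{**}\to B(H)$: this is the composition of $\pi^{**}$ with the canonical weak*-continuous projection $B(H)^{**}\to B(H)$, and verifying that it is still a Jordan homomorphism requires the usual two-step separate weak*-continuity argument; you should at least say this, since it is the engine of the whole proof.

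The genuine gap is in the last assertion. Your argument yields $\pi(e_t)\pi(a)\to\pi(a)$ and $\pi(a)\pi(e_t)\to\pi(a)$ only in the strong operator topology, and in fact yields the SOT statements for an \emph{arbitrary} J-cai, making no use of the partial cai hypothesis. Since the proposition deliberately strengthens the hypothesis from J-cai to partial cai for exactly these two limits, the intended convergence is in norm, and that is what your proof does not deliver. The ingredient you propose, $\pi(e_t)\pi(a)\pi(e_t)=\pi(e_tae_t)\to\pi(a)$ in norm, combined with $\pi(e_t)\circ\pi(a)\to\pi(a)$, does give $(1-\pi(e_t))^2\pi(a)\to 0$ and $(1-\pi(e_t))\pi(a)(1-\pi(e_t))\to 0$ in norm; but these are symmetric expressions, and extracting the one-sided limit $(1-\pi(e_t))\pi(a)\to 0$ from them is precisely the missing step. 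The standard $C^*$-algebraic device $\|x(1-e_t)\|^2=\|(1-e_t)^*x^*x(1-e_t)\|$ is unavailable because $\pi(a)^*\pi(a)$ cannot be reached through the Jordan homomorphism. You correctly diagnose this one-sidedness as the crux, but diagnosing it is not the same as resolving it: for the final sentence of the proposition one must actually import the argument of Lemma 2.19 of \cite{BWj}, not just the bidual formalism.
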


\begin{proof}   As in the proof of Lemma 2.19 in \cite{BWj}.   \end{proof}

We will see in the proof of the next theorem that if $(a_t + i b_t)$ is a cai for $A_c$ in $\frac{1}{2} {\mathfrak F}_{A_c}$ (resp.\ ${\mathfrak r}_{A_c}$) then $(a_t)$
is a cai for $A$ in $\frac{1}{2} {\mathfrak F}_{A}$ (resp.\ ${\mathfrak r}_{A}$).

\begin{theorem}[Real case of Theorem 2.8 of \cite{BWj}] \label{frden} If $A$ is an approximately unital real Jordan operator algebra then $\mathfrak{F}_A$ is weak$^*$ dense in $\mathfrak{F}_{A^{**}}$ and  $\mathfrak{r}_A$ is weak$^*$ dense in $\mathfrak{r}_{A^{**}}$. Finally, $A$ has a partial cai in $\frac{1}{2}\mathfrak{F}_A$.
\end{theorem}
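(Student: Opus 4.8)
The plan is to deduce the real statement from its complex counterpart, Theorem 2.8 of \cite{BWj}, applied to the Jordan operator algebra complexification $A_c$, and then to transport the conclusion back to $A$ by means of the canonical conjugate-linear Jordan $*$-automorphism $\iota : A_c \to A_c$, $x + iy \mapsto x - iy$, whose fixed-point space is $A$. First I would record the compatibility of the two relevant sets with complexification. Since the inclusion $A \subseteq A_c$ is isometric, unital (after passing to unitizations, where $(A^1)_c = (A_c)^1$ by Corollary \ref{uniqueJoaunitization}), and $*$-preserving, one checks directly that $\mathfrak{F}_A = \mathfrak{F}_{A_c} \cap A$ and $\mathfrak{r}_A = \mathfrak{r}_{A_c} \cap A$, and likewise at the bidual level using the identification $(A_c)^{**} = (A^{**})_c$ as dual real Jordan operator algebras (as in the proof of Lemma \ref{jcai}), so that $\mathfrak{F}_{A^{**}} = \mathfrak{F}_{(A_c)^{**}} \cap A^{**}$ and $\mathfrak{r}_{A^{**}} = \mathfrak{r}_{(A_c)^{**}} \cap A^{**}$.

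The crucial ingredient is the real-part projection $P = \tfrac{1}{2}(\mathrm{id} + \iota) : A_c \to A$. The key step is to show that
$$P(\mathfrak{F}_{A_c}) \subseteq \mathfrak{F}_A \qquad \text{and} \qquad P(\mathfrak{r}_{A_c}) \subseteq \mathfrak{r}_A.$$
This rests on the fact that $\iota$ is a unital, isometric, conjugate-linear Jordan $*$-automorphism. Being isometric and unital, $\|1 - \iota(z)\| = \|1 - z\|$, so $\iota(\mathfrak{F}_{A_c}) \subseteq \mathfrak{F}_{A_c}$; and since $\iota(z)^* = \iota(z^*)$ and $\iota$ preserves positivity (a conjugate-linear Jordan $*$-automorphism sends a positive element $h^2$ to $\iota(h)^2$ with $\iota(h)$ selfadjoint), we get $\iota(z) + \iota(z)^* = \iota(z + z^*) \geq 0$ whenever $z + z^* \geq 0$, whence $\iota(\mathfrak{r}_{A_c}) \subseteq \mathfrak{r}_{A_c}$. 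As $\mathfrak{F}_{A_c}$ is convex and $\mathfrak{r}_{A_c}$ is a convex cone, averaging $z$ with $\iota(z)$ stays inside these sets, and the average lies in $A$; this gives the two displayed inclusions.

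Next I would run the density argument. The map $P$ is real-linear and bounded, so its bitranspose $P^{**} : (A_c)^{**} \to A^{**}$ is weak$^*$-continuous, extends $P$, and fixes $A^{**}$ (it is $\tfrac12(\mathrm{id}+\iota^{**})$, and $A^{**}$ is exactly the fixed-point space of the weak$^*$-continuous extension $\iota^{**}$). Given $b \in \mathfrak{F}_{A^{**}} \subseteq \mathfrak{F}_{(A_c)^{**}}$, the complex Theorem 2.8 of \cite{BWj} provides a net $z_\lambda \in \mathfrak{F}_{A_c}$ with $z_\lambda \to b$ weak$^*$; then $P(z_\lambda) \in \mathfrak{F}_A$ and $P(z_\lambda) = P^{**}(z_\lambda) \to P^{**}(b) = b$ weak$^*$, proving that $\mathfrak{F}_A$ is weak$^*$ dense in $\mathfrak{F}_{A^{**}}$. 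The identical argument with $\mathfrak{r}$ in place of $\mathfrak{F}$ gives the second density statement. For the final assertion, the complex theorem supplies a cai $(w_\lambda)$ for $A_c$ lying in $\tfrac12 \mathfrak{F}_{A_c}$; writing $w_\lambda = a_\lambda + i b_\lambda$ we have $a_\lambda = P(w_\lambda) \in \tfrac12 \mathfrak{F}_A \subseteq {\rm Ball}(A)$, and by the cai transfer established in the proof of Lemma \ref{JOA_aprox_unital} (following the fact in the proof of \cite[Proposition 5.2.4]{Li}) the net $(a_\lambda)$ is a partial cai for $A$, as required.

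I expect the main obstacle to be the verification that $P$ preserves $\mathfrak{r}$, that is, that $\iota$ preserves real positivity. Real positivity is defined through the $*$-operation and positivity in the enveloping $C^*$-algebra, so one must check carefully that the conjugate-linear map $\iota$ genuinely commutes with $*$ and respects the positive cone; once this is settled, the remainder is routine weak$^*$-continuity and complexification bookkeeping.
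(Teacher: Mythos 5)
Your proposal is correct and follows essentially the same route as the paper: apply the complex Theorem 2.8 of \cite{BWj} to $A_c$, then push the approximating net (and the cai) back into $A$ by taking real parts, noting that the real-part map is a unital contraction commuting with the involution (so it preserves ${\mathfrak F}$ and ${\mathfrak r}$) and that its weak* continuous extension fixes $A^{**}$. The paper phrases the key inclusions as the direct inequalities $\|a_t-1\|\leq\|a_t+ib_t-1\|_c$ and ``$(a_t+ib_t)+(a_t+ib_t)^*\geq 0$ implies $a_t+a_t^*\geq 0$'' rather than through the projection $P=\tfrac12(\mathrm{id}+\iota)$, but this is only a difference of packaging.
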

\begin{proof}  
Let ($A_c,\|\cdot\|_c$) be the operator space complexification of $A$. 
Then $\mathfrak{F}_{A_c}$ is weak$^*$ dense in $\mathfrak{F}_{A_c^{**}}$. Let $x\in \mathfrak{F}_{A^{**}} \subset
\mathfrak{F}_{A_c^{**}}$. By the density in the complex case, there is a net $(a_t+ib_t)$ weak$^*$ converging to $x$, which implies 
that $a_t$ weak$^*$ converges to $x$. Since $\|a_t-1\|\leq \|a_t+ib_t-1\|_c\leq 1$, we have $a_t\in \mathfrak{F}_A$. This shows that $\mathfrak{F}_A$ is weak$^*$ dense in $\mathfrak{F}_{A^{**}}$.

Similarly, if $x\in \mathfrak{r}_{A^{**}} \subset \mathfrak{r}_{A_c^{**}}$ then there is a net $(a_t+ib_t)$ in $\mathfrak{r}_{A_c}$ weak* converging to $x$. Since $(a_t+ib_t)+(a_t+ib_t)^*\geq 0$, we have $a_t+a_t^*\geq 0$. Moreover, $a_t$ weak* converges to $x$.

Finally, by the corresponding fact in the complex case, $A_c$ has a partial cai $(e_t+i \, f_t)$ in $\frac{1}{2}\mathfrak{r}_{A_c}$. Thus, $(e_t)$ is a partial cai in $A$. Since $\|1-\frac{1}{2}e_t\|\leq \|1-\frac{1}{2}(e_t+i \, f_t)\|\leq 1$, we have that $e_t\in \frac{1}{2}\mathfrak{r}_{A}$.
\end{proof}

A similar proof gives the analogue of Proposition 2.10 and Corollary 2.11 of \cite{BWj}:

\begin{proposition} \label{corde} Let $A$ be an approximately unital real Jordan operator algebra. Then the set of contractions in $\mathfrak{r}_A$ is weak* dense in the set of contractions in $\mathfrak{r}_{A^{**}}$.
\end{proposition}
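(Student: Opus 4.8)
The plan is to mimic the proof of Theorem \ref{frden}, passing to the operator space complexification and extracting real parts, but now tracking the contraction condition alongside the real positivity condition. Let $(A_c, \|\cdot\|_c)$ be the operator space complexification of $A$. Recall from the introduction that this is completely reasonable, so the conjugation $\iota : a+ib \mapsto a-ib$ is an isometry on $A_c$, whence $\|a\| = \|\tfrac{1}{2}((a+ib)+\iota(a+ib))\| \leq \|a+ib\|_c$ for all $a,b \in A$. The same holds at the bidual level, since $A_c^{**} = (A^{**})_c$ as dual real Jordan operator algebras (as recorded in the proof of Lemma \ref{jcai}, via \cite{RComp}), and the conjugation extends weak* continuously.

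First I would invoke the complex case, namely Corollary 2.11 of \cite{BWj}, which asserts that the contractions in $\mathfrak{r}_{A_c}$ are weak* dense in the contractions in $\mathfrak{r}_{A_c^{**}}$. Given a contraction $x \in \mathfrak{r}_{A^{**}}$, I would note that $x$ is also a contraction in $\mathfrak{r}_{A_c^{**}}$, since the inclusion $A^{**} \subset A_c^{**}$ is isometric and the real positivity $x + x^* \geq 0$ is unchanged when $x$ is viewed in the larger algebra. Hence there is a net $(a_t + i b_t)$ of contractions in $\mathfrak{r}_{A_c}$, with $a_t, b_t \in A$, converging weak* to $x$.

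Next I would take real parts. Since the real-part projection $A_c^{**} \to A^{**}$ is weak* continuous and fixes $x$, the net $(a_t)$ converges weak* to $x$. From $(a_t + ib_t) + (a_t + ib_t)^* \geq 0$ one reads off $a_t + a_t^* \geq 0$ exactly as in the proof of Theorem \ref{frden}, so $a_t \in \mathfrak{r}_A$; and from the reasonableness inequality above, $\|a_t\| \leq \|a_t + ib_t\|_c \leq 1$, so each $a_t$ is a contraction. This exhibits $x$ as a weak* limit of contractions in $\mathfrak{r}_A$, completing the argument.

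The individual steps are routine, so there is no serious obstacle; the one point requiring care — and the only place the real structure really intervenes — is that \emph{both} defining conditions of the target set must be inherited by the real part simultaneously. The norm bound is what needs watching: it is not automatic that the real part of a contraction is a contraction, and the argument leans on the completely reasonable complexification to supply $\|a\| \leq \|a+ib\|_c$. The companion fact that $a_t + a_t^* \geq 0$ follows from $(a_t+ib_t)+(a_t+ib_t)^* \geq 0$ is the same elementary point used (without comment) in Theorem \ref{frden}: writing $p := a_t + a_t^*$, which is selfadjoint, real positivity of $p+i(b_t-b_t^*)$ forces $p \geq 0$, so that the pathology of the real case (where $\langle T\xi,\xi\rangle \geq 0$ need not give $T \geq 0$) is avoided precisely because $p$ is selfadjoint.
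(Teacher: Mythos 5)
Your proof is correct and follows essentially the same route as the paper, which simply remarks that ``a similar proof'' to that of Theorem \ref{frden} (complexify, invoke the complex density result from \cite{BWj}, take real parts, and check that both the real positivity and the norm bound survive via the reasonableness inequality $\|a\|\leq\|a+ib\|_c$) yields the result. The only quibble is the citation: the complex-case density of contractions in ${\mathfrak r}_{A_c}$ is Proposition 2.10 of \cite{BWj} rather than Corollary 2.11, but this does not affect the argument.
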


\begin{proposition} If $A$ is a Jordan operator algebra with a countable Jordan cai, then $A$ has a countable partial cai in $\frac{1}{2}\mathfrak{F}_{A}$.
\end{proposition}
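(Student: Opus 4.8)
The plan is to mimic the proof of Theorem~\ref{frden}, passing to the operator space complexification $A_c$, applying the known countable complex result, and then taking real parts. First I would observe that a countable J-cai $(e_n)_{n\in\Ndb}$ for $A$ is automatically a countable J-cai for $A_c$. Indeed each $e_n$ lies in $A\subseteq A_c$ with $\|e_n\|\leq 1$ computed in either space (a real element $x$ corresponds to the matrix $\begin{bmatrix} x & 0\\ 0 & x\end{bmatrix}$, of norm $\|x\|$, in the realization (\ref{ofr}) of $A_c$), and for $a,b\in A$ one has $e_n\circ(a+ib)=e_n\circ a+i\,(e_n\circ b)\to a+ib$, so $e_n\circ w\to w$ for every $w\in A_c$. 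Thus $A_c$ is a complex Jordan operator algebra possessing a countable J-cai, and in particular is approximately unital.

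Next I would invoke the complex case of the present proposition, namely Corollary~2.11 of \cite{BWj}, to produce a countable partial cai $(e_n+i\,f_n)_{n\in\Ndb}$ for $A_c$ lying in $\frac{1}{2}\mathfrak{F}_{A_c}$, where $e_n,f_n\in A$. Exactly as in the converse direction of Lemma~\ref{JOA_aprox_unital}(1) (see also the final paragraph of the proof of Theorem~\ref{frden}), the sequence of real parts $(e_n)_{n\in\Ndb}$ is then a partial cai for $A$; since we index by $\Ndb$ throughout, it remains countable.

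It then remains to check that $e_n\in\frac{1}{2}\mathfrak{F}_A$, i.e.\ $\|1-2e_n\|\leq 1$, where $1$ denotes the identity of the unitization $A^1$. Here I would use $(A^1)_c=(A_c)^1$ from Corollary~\ref{uniqueJoaunitization} together with the fact that the \emph{real part} map $x+iy\mapsto x$ is contractive on the completely reasonable complexification, since $x=\frac{1}{2}\big((x+iy)+\iota_{A^1}(x+iy)\big)$ and $\iota_{A^1}$ is isometric. As the real part of $1-2(e_n+i\,f_n)$ is $1-2e_n$, this yields $\|1-2e_n\|\leq\|1-2(e_n+i\,f_n)\|_c\leq 1$, which is precisely the estimate used at the end of the proof of Theorem~\ref{frden}.

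The argument is entirely routine once the complex countable result is available; the only points requiring a moment's care are that passing from $A$ to $A_c$ and back keeps the index set countable and keeps the members inside $\frac{1}{2}\mathfrak{F}$, and both are dispatched by the real-part contraction estimate above. I do not expect any genuine obstacle beyond having the complex version of the statement in hand.
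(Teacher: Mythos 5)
Your proposal is correct and follows exactly the route the paper intends: the paper gives no separate argument for this proposition, stating only that ``a similar proof'' to that of Theorem \ref{frden} works, i.e.\ pass to $A_c$, apply the complex countable result (Corollary 2.11 of \cite{BWj}), and extract the real parts using the contractivity estimate $\|1-2e_n\|\leq\|1-2(e_n+i f_n)\|_c$. Your write-up simply makes those steps explicit, so there is nothing to add.
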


In the following result, which generalizes  \cite[Theorem 4.1]{BWj} (which in turn derives from \cite[Theorem 2.1]{BRord}),
we write  $x \preccurlyeq y$  to denote Re$(x) \leq {\rm Re}(y)$.   Here Re$(x) = (x+x^*)/2$.  Also ${\mathfrak c}_A = \Rdb^+ \, {\mathfrak F}_A$.

\begin{theorem} \label{brord} Let $A$ be a real Jordan operator algebra which generates a real $C^*$-algebra $B$,  and let 
${\mathcal U}_A$ denote the open unit ball $\{ a \in A : \Vert a \Vert < 1 \}$.  The following are equivalent:
\begin{itemize} \item [(1)]   $A$ is approximately unital.
 \item [(2)]  For any positive $b \in {\mathcal U}_B$ there exists  $a \in {\mathfrak r}_A$
with $b \preccurlyeq a$.
 \item [(2')]  Same as {\rm (2)}, but $a \in \frac{1}{2}  {\mathfrak F}_A$.
\item [(3)]   For any pair 
$x, y \in {\mathcal U}_A$ there exist   $a \in \frac{1}{2}  {\mathfrak F}_A$
with $x \preccurlyeq a$ and $y \preccurlyeq a$.
\item [(4)]    For any $b \in {\mathcal U}_A$  there exist  
$a \in \frac{1}{2}  {\mathfrak F}_A$
with $-a \preccurlyeq b \preccurlyeq a$. 
\item [(5)] For any $b \in {\mathcal U}_A$  there exist 
$x, y \in  \frac{1}{2}  {\mathfrak F}_A$ 
with $b = x-y$.   
\item [(6)]  ${\mathfrak r}_A$ is a generating cone (that is, $A = {\mathfrak r}_A - {\mathfrak r}_A$). 
\item [(7)]  $A = {\mathfrak c}_A - {\mathfrak c}_A$. 
\end{itemize}  
\end{theorem}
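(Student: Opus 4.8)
The plan is to reduce everything to the complex case, \cite[Theorem 4.1]{BWj} applied to the complexification $A_c$ (a complex Jordan operator algebra generating the complex $C^*$-algebra $B_c$), using two devices: the conjugate-linear $*$-automorphism $\iota$ of $B_c$ fixing $B$ pointwise, which restricts to the completely isometric period-two Jordan $*$-automorphism $x+iy \mapsto x-iy$ of $A_c$ with fixed-point set $A$; and the elementary dictionary $\mathfrak{r}_A = \mathfrak{r}_{A_c} \cap A$, $\mathfrak{F}_A = \mathfrak{F}_{A_c} \cap A$, $\mathfrak{c}_A = \mathfrak{c}_{A_c} \cap A$, $\mathcal{U}_A = \mathcal{U}_{A_c} \cap A$, together with $B_+ \subseteq (B_c)_+$. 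Since $\iota$ preserves positivity and adjoints, it preserves the relation $\preccurlyeq$; it fixes $1$ and every element of $A$ and $B$; and for $z \in A_c$ the average $z' = \frac12(z + \iota(z))$ lies in $A$. The basic real tool is that $\iota$ turns a complex witness into a real one: if $z$ lies in one of the convex complex cones then so does $z'$ (for $\mathfrak F$ one has $\|1 - 2z'\| \le \frac12\|1-2z\| + \frac12\|1-2\iota(z)\| \le 1$, and similarly for $\mathfrak r$ and the convex cone $\mathfrak c$), while any $\preccurlyeq$-domination or difference decomposition of a real datum by $z$ is inherited by $z'$ after averaging.

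First the forward implications $(1) \Rightarrow (2),(2'),(3),(4),(5),(6),(7)$. By Lemma \ref{jcai} (and Lemma \ref{JOA_aprox_unital}) $A$ is approximately unital if and only if $A_c$ is, so $(1)$ gives the hypothesis of \cite[Theorem 4.1]{BWj} for $A_c$, whence all the complex analogues of $(2)$--$(7)$ hold for $A_c$. Each real statement is then read off by applying the complex one to the prescribed real datum --- a positive $b \in \mathcal{U}_B \subseteq (B_c)_+$, or $x,y,b \in \mathcal{U}_A$, all fixed by $\iota$ --- obtaining a complex witness and replacing it by its $\iota$-average as above; for $(5)$, $(6)$, $(7)$ one averages each of the two witnessing summands separately.

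The equivalences among $(2')$, $(2)$, $(3)$--$(7)$ are the purely order-theoretic manipulations of the complex proof and transfer essentially verbatim, since they involve no multiplication by $i$: for instance $(2')\Rightarrow(2)$ is immediate from $\frac12\mathfrak{F}_A \subseteq \mathfrak{r}_A$, and $(3)\Rightarrow(4)$ follows on taking $x=b$, $y=-b$. A few of these (as in \cite{BWj,BRord}) pass through the bidual; here one uses that $A^{**}$ is a weak*-closed real Jordan subalgebra of the real von Neumann algebra $B^{**}$ with $\Delta(A^{**})$ a real JW*-algebra, together with the real weak* density statements of Theorem \ref{frden} and Proposition \ref{corde}.

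It remains to close the cycle with a single return to $(1)$, and this I would do directly over $\Rdb$ rather than by complexifying. Assuming $(2')$, pick an increasing positive cai $(b_\mu)$ for $B$ in $\mathcal{U}_B$ and, for each $\mu$, an element $a_\mu \in \frac12\mathfrak{F}_A$ with $b_\mu \preccurlyeq a_\mu$. From $2a_\mu \in \mathfrak{F}_A$ one gets $a_\mu^* a_\mu \le \frac12(a_\mu + a_\mu^*) = {\rm Re}(a_\mu)$ (and likewise $a_\mu a_\mu^* \le {\rm Re}(a_\mu)$), whence $(1-a_\mu)^*(1-a_\mu) \le 1 - {\rm Re}(a_\mu) \le 1 - b_\mu$; since $(b_\mu)$ is a cai for $B$ this forces $\|x - a_\mu x\| \to 0$ and, symmetrically, $\|x - x a_\mu\| \to 0$ for $x \in A$, so $(a_\mu)$ is a $B$-relative partial cai and $A$ is approximately unital by Lemma \ref{jcai}. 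The main obstacle is exactly the structural asymmetry that makes this detour necessary: because real positivity is not preserved by multiplication by $i$ (so that $i\,\mathfrak{r}_A \not\subseteq \mathfrak{r}_{A_c}$ in general and none of the real cone conditions lifts to its complex counterpart), the theorem cannot be obtained by transporting the complex equivalence wholesale to $A_c$. One must instead isolate the single statement $(1)$ that does complexify, use $\iota$-averaging to descend the complex witnesses in the forward direction, and supply the genuinely real argument above (or its bidual refinement) to recover approximate unitality.
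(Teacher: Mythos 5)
Your proposal is correct, and for the one implication the paper actually proves in detail it is the same argument: the ``$\iota$-average'' $\tfrac12(z+\iota(z))$ of a complex witness $z=x+iy$ is just its real component $x$, and your check that it lands in $\tfrac12{\mathfrak F}_A$ via $\|1-2x\|\le\|1-2x-2iy\|\le 1$ is exactly the paper's proof of (1) $\Rightarrow$ (2$'$). Both you and the paper then delegate the remaining equivalences to the complex proofs in \cite{BRord} and \cite{BWj} with real substitutes (Lemma \ref{jcai}, Theorem \ref{frden}, Proposition \ref{corde}). Two differences are worth noting. You spell out a correct closing implication (2$'$) $\Rightarrow$ (1) --- the estimate $(1-a_\mu)^*(1-a_\mu)\le 1-{\rm Re}(a_\mu)\le 1-b_\mu$ against an increasing positive cai of $B$ does yield a $B$-relative partial cai, and Lemma \ref{jcai} finishes --- whereas the paper leaves this inside the blanket citation. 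On the other hand, your assertion that the equivalences among (2)--(7) ``involve no multiplication by $i$'' and transfer verbatim is slightly too quick: the paper isolates exactly one $C^*$-algebraic ingredient there that does need a real-case verification, namely that two selfadjoint elements of ${\mathcal U}_B$ admit a common positive majorant in ${\mathcal U}_B$ (used for (2$'$) $\Rightarrow$ (3) and (2) $\Rightarrow$ (6)), and it verifies this by the same device you use elsewhere --- pass to $B_c$ and discard the imaginary part of the complex majorant. That fact is true, so nothing in your argument breaks, but it is the one link in the ``order-theoretic'' chain that genuinely has to be checked rather than quoted.
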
  

\begin{proof}  (1) $\Rightarrow$ (2') \ 
Let $A$ be an approximately unital real Jordan operator algebra, and $b \in A_+$ with
$\| b \| < 1$.
Then $A_c$ is approximately unital, and by \cite[Theorem 4.1 (2)]{BWj} there exists $x+iy \in \frac{1}{2} {\mathfrak F}_{A_c}$ 
such that $b \leq {\rm Re} (x+iy)$.    This is easily seen to imply that $b \leq {\rm Re} (x)$.
Since $\| 1 - 2x \| \leq \| 1 - 2x - 2 iy \| \leq 1$ we have  $x \in \frac{1}{2} {\mathfrak F}_{A}$.
  
The other implications are as in \cite[Theorem 2.1]{BRord} and  \cite[Theorem 4.1]{BWj}, however some of the results invoked in those proofs
need to be replaced by their real variants from the present paper.  Also we note that the implications (2') $\Rightarrow$ (3) and (2) $\Rightarrow$ (6) follow 
from a fact from $C^*$-algebra theory.   Namely, from the Claim: if $x$ and $y$ (with $y = -x$ in the second implication) are selfadjoint and in ${\mathcal U}_B$ 
 then there is a positive element in ${\mathcal U}_B$ which is
 greater than both.   This is true in the real case too.  To see this we may assume that  $x$ and $y$ are also in $B_+$,
 by replacing them by $x_+$ and $y_+$.  In this case the Claim is usually an ingredient in standard proofs that $B$ has an increasing cai.
 However this case of the Claim also follows from the same fact but in the complex case.  Indeed there exists $a + ib \in B_c$ with (i)\
 $\| a + ib \| < 1$, and (ii)\ $x$ and $y$ dominated by $a + ib$.   However (i) implies that $\| a \| < 1$ and (ii) implies that
 $x$ and $y$ are dominated by $a$.   
\end{proof} 

A Jordan ideal  in a Jordan operator algebra $A$ is a subspace $J$ of $A$ with $J \circ A \subset J$.  

\begin{proposition} \label{Mid} Let $A$ be an approximately unital real operator algebra (resp.\ Jordan operator algebra).
\begin{itemize} 
\item [(1)]  If $A$ is weak* closed then the weak* closed ideals (resp.\ Jordan ideals) in $A$ which possess an identity
are in a bijective correspondence with the central projections $e \in A$, via $e \mapsto Ae$ (resp.\ $e \mapsto e \circ A$).
These ideals are $M$-summands of $A$.
\item [(2)]    The closed ideals (resp.\ Jordan ideals)  in $A$ which possess a cai  (resp.\ J-cai) 
are the subspaces of $A$ whose weak* closure in $A^{**}$ are of the form in {\rm (1)} for a central projection $e \in A^{**}$.   Thus they 
are of form $\{ x \in A : x = exe \}$ for some such $e \in A^{**}$. 
\end{itemize}  \end{proposition}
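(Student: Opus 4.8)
The plan is to deduce both parts from the complex case of the result (the complex Jordan operator algebra versions in \cite{BWj,BNj}) by passing to the operator space complexification $A_c$ and then intersecting back with $A$, exactly in the spirit of the complexification arguments used in Sections \ref{Sec2} and \ref{ai}. Two facts are the workhorses. First, $(A_c)^{**} = (A^{**})_c$ as dual real Jordan operator algebras, which was recorded in the proof of Lemma \ref{jcai}. Second, a real subspace $J \subseteq A$ is a (Jordan) ideal precisely when $J_c = J + iJ$ is a complex (Jordan) ideal of $A_c$, and then $J = J_c \cap A$ by uniqueness of the decomposition $A_c = A \oplus iA$. For (1) I would first observe that a weak* closed approximately unital real Jordan operator algebra is automatically unital: by Lemma \ref{jcai} it has a J-cai, which has a weak* cluster point $e \in A$ since the ball is weak* compact, and separate weak* continuity of the Jordan product forces $e$ to be an identity. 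So I may assume $1 \in A$, and `central projection' then means a projection in $A$ operator-commuting with $A$.

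For the two directions of the correspondence in (1): given a central projection $e \in A$, the map $P : x \mapsto e \circ x$ is a weak* continuous \emph{real} idempotent on $A_c$, so applying the complex case to $e \circ A_c$ (a weak* closed Jordan ideal and $M$-summand of $A_c$ with identity $e$) and restricting the real $M$-projection $P$ to $A$ shows that $e \circ A$ (resp.\ $Ae$ in the associative case) is a weak* closed (Jordan) ideal and an $M$-summand of $A$ with identity $e$. Conversely, if $J$ is a weak* closed (Jordan) ideal of $A$ with identity $e$, then $J_c$ is a weak* closed complex (Jordan) ideal of $A_c$ with identity $e$; the complex case gives that $e$ is a central projection of $A_c$ with $J_c = e \circ A_c$. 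Since $e$ lies in the \emph{real} algebra $J \subseteq A$, it is a central projection of $A$, and intersecting with $A$ yields $J = J_c \cap A = (e \circ A_c) \cap A = e \circ A$. Injectivity of $e \mapsto e \circ A$ is clear since $e$ is recovered as the identity of the ideal, and surjectivity is the converse just proved.

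For (2) I would complexify once more. If $J$ is a closed (Jordan) ideal of $A$ possessing a cai (resp.\ J-cai), then by Lemma \ref{JOA_aprox_unital} the ideal $J_c$ of $A_c$ is approximately unital, so by the complex case its weak* closure in $A_c^{**}$ has the form $e \circ A_c^{**}$ for a central projection $e$, and $J_c = \{ x \in A_c : x = exe \}$. The essential point is that $e$ is the identity of $J_c^{**} = (J^{**})_c$, hence lies in the real unital algebra $J^{**} \subseteq A^{**}$; thus $e$ is a central projection of $A^{**}$ (alternatively, apply part (1) directly to the weak* closed approximately unital algebra $A^{**}$ and its weak* closed ideal $\overline{J}^{\,w*}$). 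Intersecting with $A^{**}$, and using $A_c^{**} = (A^{**})_c$ together with $e \circ A_c^{**} = (e \circ A^{**})_c$, gives that the weak* closure of $J$ in $A^{**}$ equals $e \circ A^{**}$, which is of the form in (1); and $J = J_c \cap A = \{ x \in A : x = exe \}$. The main obstacle, and the one place to be careful, is exactly this transfer of centrality: one must verify that the central projection handed back by the complex theorem is genuinely real, i.e.\ lies in $A^{**}$ and not merely in $A_c^{**}$. This is secured by identifying $e$ as the necessarily real identity of the real bidual $J^{**}$, together with the identifications $(A_c)^{**} = (A^{**})_c$ and $(J_c)^{**} = (J^{**})_c$ coming from Lemma \ref{jcai} and \cite{RComp}.
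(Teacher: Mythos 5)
Your argument is correct, but it takes a different route from the paper. The paper's proof is a one-line citation: it asserts that the result ``follows just as in the complex case in \cite[Theorem 3.25]{BWj}'', i.e.\ the intended argument is to rerun the complex proof verbatim in the real setting (using the real versions of the ingredients established earlier in the paper, such as Lemma \ref{jcai} and the identification of $J^{**}$ with the weak* closure of $J$ in $A^{**}$). You instead \emph{deduce} the real statement from the complex theorem by passing to the operator space complexification $A_c$, applying the complex result to $J_c$, and intersecting back with $A$. Both strategies are sanctioned by the introduction, and yours is carried out carefully: the two points that could sink a complexification argument --- that $J_c$ is genuinely a weak* closed (Jordan) ideal of $A_c$ when $J$ is one of $A$, and that the central projection $e$ returned by the complex theorem actually lies in the real algebra $A^{**}$ rather than merely in $(A_c)^{**}$ --- are both addressed, the latter via the identification of $e$ as the identity of $(J^{**})_c$ (equivalently, via the weak* continuous conjugation automorphism fixing it). What your route buys is that you only need the complex theorem as a black box plus the identifications $(A_c)^{**}=(A^{**})_c$ and $J_c\cap A=J$; what the paper's route buys is that it avoids the complexification bookkeeping entirely and is the pattern used for the neighbouring results (e.g.\ Corollary \ref{quoi}), at the cost of having to confirm that each step of the complex proof survives the passage to real scalars.
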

 
\begin{proof}  This follows just as in the complex case in e.g.\ \cite[Theorem 3.25]{BWj}.       \end{proof} 

We will do a more thorough study elsewhere of the $M$-ideals in real Jordan operator algebras (following on from   \cite[Section 5]{Sharma}).  

\begin{corollary} \label{quoi}   If $J$ is an approximately unital closed two-sided ideal (resp.\ Jordan ideal) in  a
 real operator algebra (resp.\ Jordan operator algebra) $A,$ 
then $A/J$ is (completely isometrically isomorphic to)
a real operator algebra (resp.\ Jordan operator algebra).   \end{corollary} 
 
 \begin{proof}      This follows from Proposition \ref{Mid} just as in the complex case in e.g.\ \cite[Theorem 3.27]{BWj}.  \end{proof} 
 
 Turning to one-sided ideals, Sharma showed in  \cite[Section 5]{Sharma} that the closed right ideals   in  an approximately unital 
 real operator algebra $A$ which possess a left cai  
are the subspaces of $A$ whose weak* closure in $A^{**}$ are of the form $eA^{**}$ for a projection $e \in A^{**}$.   Thus they 
are of form $\{ x \in A : x = ex \}$ for some such $e \in A^{**}$.    The projections $e$ occurring here are called {\em open projections}. 
The corresponding subspace $\{ x \in A : x = exe \}$ is called a {\em hereditary subalgebra} of $A$. 
For a real Jordan operator algebra $A$ we define a hereditary subalgebra to be a subspace
 of $A$ whose weak* closure in $A^{**}$ is of the form $eA^{**}e$ for a projection $e \in A^{**}$.   Thus they 
are of form $\{ x \in A : x = exe \}$ for such $e \in A^{**}$.  
We will discuss elsewhere  the noncommutative topology and hereditary subalgebras of real Jordan operator algebras, in the spirit of  
\cite{BNj}.

\begin{lemma} \label{juni}  If A is a nonunital approximately unital real Jordan operator algebra then the unitization $A^1$  is well defined up to completely isometric Jordan isomorphism, and the matrix norms are
$$\|[a_{ij}+\lambda_{ij} 1]\|=\sup\{\|[a_{ij}\circ c +\lambda_{ij} \, c]\|_{M_n(A)} : c \in {\rm Ball}(A)\}, \quad a_{ij}\in A, \lambda_{ij}\in \Rdb.$$ 
\end{lemma}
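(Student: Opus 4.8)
The plan is to treat the two assertions separately: the complete isometric uniqueness I would obtain by complexification, while the explicit matrix norm formula I would establish by a direct approximate identity argument. For the uniqueness, note that $A_c$ is an approximately unital complex Jordan operator algebra (by Lemma \ref{JOA_aprox_unital} or \ref{jcai}), and that $(A^1)_c = (A_c)^1$ by Corollary \ref{uniqueJoaunitization} together with the uniqueness of the operator space complexification. The complex case of the present lemma (from \cite{BWj}) gives that $(A_c)^1$ is unique up to completely isometric Jordan isomorphism. Hence for any two real unitizations $U_1, U_2$ of $A$ the canonical complex Jordan isomorphism $(U_1)_c \to (U_2)_c$ fixing $A_c$ is completely isometric, and its restriction to the real part, which is exactly the canonical real map $U_1 \to U_2$ fixing $A$, is a completely isometric Jordan isomorphism.

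To obtain the displayed formula I would realize $A^1$ concretely. Fix a faithful nondegenerate $*$-representation of the generated real $C^*$-algebra $C^*_B(A)$ on a real Hilbert space $H$; this represents $A$ completely isometrically and nondegenerately, and $A^1 = A + \Rdb\, I_H$ carries its inherited operator space structure. For $X = [a_{ij}+\lambda_{ij} I_H] \in M_n(A^1) \subseteq B(H^{(n)})$ and $c \in A$, the entrywise identity $(a_{ij}+\lambda_{ij}1)\circ c = a_{ij}\circ c + \lambda_{ij} c$ shows that $[a_{ij}\circ c + \lambda_{ij} c]$ is precisely the Jordan product $X \circ (c\, I_n)$ taken in $M_n(A^1)$. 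The inequality that the supremum is at most $\|X\|$ is then immediate, since the Jordan product is contractive: $\|X \circ (c\, I_n)\| \leq \|X\|\, \|c\, I_n\| \leq \|X\|$ whenever $c \in {\rm Ball}(A)$.

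For the reverse inequality I would invoke a partial cai $(e_t)$ for $A$. By Lemma \ref{jcai} this net is a cai for $C^*_B(A)$, so in the chosen nondegenerate representation $e_t \to I_H$ strongly, and hence $e_t\, I_n \to I_{H^{(n)}}$ in the strong operator topology. Boundedness of $X$ then gives $X(e_t\, I_n) \to X$ and $(e_t\, I_n)X \to X$ strongly, whence $X \circ (e_t\, I_n) \to X$ strongly; lower semicontinuity of the operator norm under SOT convergence yields $\|X\| \leq \liminf_t \|X \circ (e_t\, I_n)\| \leq \sup\{\|X\circ(c\,I_n)\| : c \in {\rm Ball}(A)\}$, which together with the previous paragraph gives the formula.

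I expect the crux to be exactly this lower bound. In the nonassociative Jordan setting one cannot appeal to an isometric left regular representation, because the operator $c \mapsto X \circ (c\,I_n)$ is merely contractive and not multiplicative, so the norm of $X$ must instead be recovered from below through the approximate identity. The device that makes this work is Lemma \ref{jcai}, which upgrades the partial cai of $A$ to a genuine cai of the generated $C^*$-algebra; representing that $C^*$-algebra nondegenerately is what forces the strong convergence $e_t \to I_H$ needed to reach $\|X\|$. Finally, since the right-hand side of the formula refers only to the Jordan product and the matrix norms of $A$, it is intrinsic, consistent with the uniqueness established in the first step.
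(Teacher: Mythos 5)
Your proof is correct and is essentially the argument the paper has in mind: the paper simply cites the complex case (Proposition 2.12 of \cite{BWj}), whose proof is exactly your two-sided estimate --- contractivity of the Jordan product for $\le$, and the partial cai upgraded via Lemma \ref{jcai} to a cai of $C^*_B(A)$ converging strongly to $I_H$ in a nondegenerate representation for $\ge$. The only cosmetic difference is that you establish the completely isometric uniqueness up front by complexification, whereas one can also read it off afterwards from the fact that the right-hand side of the formula is intrinsic to $A$; both routes are valid.
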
 

\begin{proof} The proof is the same as for the complex case in Proposition 2.12 in \cite{BWj}.  \end{proof}

\begin{remark}   A unitization of a real Jordan operator algebra $A$ need  not be well defined/unique up to completely
isometric Jordan homomorphism.   This may be seen by modifying the argument as in Proposition 2.1 in \cite{BWj2} (see
\cite[Remark 4.4.3]{WTT}).   This means that some results about  real Jordan operator algebras with no kind of approximate
identity may not be treatable 
in the operator space category (as opposed to the Banach space category).   If however $A$ is approximately unital then  the last result 
shows that this problem does not exist.
\end{remark}

As in the complex case \cite[Theorem 2.8]{BWj}, a   real approximately unital Jordan operator algebra $A$ is an $M$-ideal in $A^1$. 

\begin{lemma}[Real case of Lemma 2.20 in \cite{BWj}] \label{blmf} Let $A$ be a real approximately unital Jordan operator algebra with a partial cai $(e_t)$. Denote the identity of $A^1$ by $1$. The following facts hold.
\begin{enumerate}	\item If $\psi:A^1\to \Rdb$ is a functional on $A^1$, then $\lim_t\psi(e_t)=\psi(1)$ if and only if $\|\psi\|=\|\psi|_{A}\|$.
	\item Let $\varphi \in A^*$. Then $\varphi$ uniquely extends to a functional on $A^1$ of the same norm.
\end{enumerate}
\end{lemma}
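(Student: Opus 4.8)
The plan is to deduce both parts from the corresponding complex result (Lemma 2.20 in \cite{BWj}) applied to the complexification $A_c$, using the facts already established about real unitizations and complexifications. First, recall from Corollary \ref{uniqueJoaunitization} that $(A^1)_c = (A_c)^1$ isometrically, so the unitization of $A$ sits inside the unitization of $A_c$, and the identity $1$ of $A^1$ is the identity of $(A_c)^1$ as well. Moreover, by Lemma \ref{JOA_aprox_unital} (or Theorem \ref{frden}), a partial cai $(e_t)$ for $A$ is a partial cai for $A_c$, so we may transport the approximate-identity data to the complex setting freely.

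For part (1), given a real-linear functional $\psi : A^1 \to \Rdb$, I would form its complexification $\psi_c : (A_c)^1 \to \Cdb$ defined by $\psi_c(x + iy) = \psi(x) + i\psi(y)$. The standard fact (used repeatedly in the paper, e.g.\ in the proof of Theorem \ref{frden}) is that $\|\psi_c\| = \|\psi\|$ under the operator space complexification, and likewise $\|\psi_c|_{A_c}\| = \|\psi|_A\|$. Since $e_t \in A$ is real, $\psi_c(e_t) = \psi(e_t)$ and $\psi_c(1) = \psi(1)$, so $\lim_t \psi(e_t) = \psi(1)$ holds for $\psi$ if and only if the analogous limit holds for $\psi_c$. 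Applying the complex case of the lemma to $\psi_c$, this latter condition is equivalent to $\|\psi_c\| = \|\psi_c|_{A_c}\|$, which by the norm identities above is exactly $\|\psi\| = \|\psi|_A\|$.

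For part (2), let $\varphi \in A^*$. Passing to $\varphi_c \in (A_c)^*$ with $\|\varphi_c\| = \|\varphi\|$, the complex case of Lemma 2.20(2) in \cite{BWj} gives a unique norm-preserving extension $\tilde{\varphi_c}$ of $\varphi_c$ to $(A_c)^1 = (A^1)_c$. I would then check that $\tilde{\varphi_c}$ restricts to a real functional on $A^1$: concretely, one shows $\tilde{\varphi_c}$ commutes with the conjugation $\iota_{A^1} : x + iy \mapsto x - iy$ coming from the complexification, because $\varphi_c$ already does on $A_c$ and the extension is unique. The restriction of this conjugation-invariant functional to the real part $A^1$ is then a real-valued extension $\tilde\varphi$ of $\varphi$ with $\|\tilde\varphi\| = \|\tilde{\varphi_c}\| = \|\varphi_c\| = \|\varphi\|$. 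Uniqueness of $\tilde\varphi$ follows from the uniqueness in the complex case together with the observation that any real extension complexifies to an extension of $\varphi_c$.

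The step I expect to require the most care is verifying that the complex extension in part (2) is genuinely conjugation-invariant, hence descends to a real functional of the same norm rather than merely a complex one. The uniqueness clause in the complex lemma is what makes this work: since $\overline{\tilde{\varphi_c} \circ \iota_{A^1}}$ is another norm-preserving extension of $\varphi_c$ (using that $\varphi$ is real so $\varphi_c$ is $\iota_{A_c}$-invariant), uniqueness forces it to equal $\tilde{\varphi_c}$. One must also confirm that the unique extension norm-equality $\|\tilde{\varphi_c}\| = \|\varphi_c\|$ truly reflects the real norm $\|\tilde\varphi\| = \|\varphi\|$, which again uses the isometric identity $(A^1)_c = (A_c)^1$ from Corollary \ref{uniqueJoaunitization}.
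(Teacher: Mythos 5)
Your proposal is correct and follows essentially the same route as the paper: both parts are deduced from the complex Lemma 2.20 of \cite{BWj} via the identification $(A^1)_c=(A_c)^1$ and the equality $\|\psi_c\|=\|\psi\|$ for complexified functionals (the paper cites Proposition 1.4.1 of \cite{Li} for this). The only cosmetic difference is in part (2), where the paper gets existence from the real Hahn--Banach theorem and complexifies two extensions to prove uniqueness, while you derive existence from the unique complex extension via its conjugation-invariance; both arguments are sound.
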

\begin{proof} (1) \ We have $(A^1)_c=(A_c)^1$ by Corollary \ref{uniqueJoaunitization}.
If $\psi:A^1\to \Rdb$ is a functional on $A^1$, then  $\|\psi\|=\|\psi_c\|$ by Proposition 1.4.1 in \cite{Li}.   By Lemma \ref{JOA_aprox_unital},
$(e_t)$ is a partial cai for $A_c$.
By Lemma 2.20 in \cite{BWj}, $\lim_t\psi(e_t)= \psi(1)$ if and only if $\|\psi_c\| =\|(\psi_c)_{|A_c} \|$.    Now $(\psi_c)_{|A_c} = (\psi_{|A})_c$,
and so $$\|(\psi_c)_{|A_c} \| = \| (\psi_{|A})_c \| = \| \psi_{|A} \|$$  by Proposition 1.4.1 in \cite{Li}.
Thus $\lim_t\psi(e_t)=\psi(1)$ if and only if $\|\psi\|=\|\psi|_{A} \|$.  

(2) \ This uses a similar idea: if $\psi, \rho$ are two Hahn-Banach extensions of $\varphi$  to  $A^1$,
then $\psi_c, \rho_c$ are two Hahn-Banach extensions of $\varphi_c$  to  $A_c^1$, by Proposition 1.4.1 in \cite{Li}.  
\end{proof}

\begin{lemma}[Real case of Lemma 2.21 in \cite{BWj}] For a norm $1$ functional $\varphi$ on an approximately unital real Jordan operator algebra $A$, the following are equivalent:
\begin{enumerate}
	\item $\varphi$ extends to a state on $A^1$.
	\item $\varphi(e_t)\to 1$ for every partial cai $e_t\in A$.
	\item $\varphi(e_t)\to 1$ for some partial cai for $A$.
	\item $\varphi(e)=1$ where $e$ is the identity of $A^{**}$.
	\item $\varphi(e_t)\to 1$ for every Jordan cai for $A$.
	\item $\varphi(e_t)\to 1$ for some Jordan cai for $A$.
\end{enumerate}
\end{lemma}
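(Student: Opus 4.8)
The plan is to funnel all six conditions through the single scalar $\lim_t \varphi(e_t)$ and to identify that number, via the previous lemma, with the value at $1$ of the canonical extension of $\varphi$ to $A^1$. First I would invoke Lemma \ref{blmf}(2): $\varphi$ has a \emph{unique} norm-preserving extension $\tilde\varphi$ to $A^1$. Now a state on $A^1$ is by definition a unital contractive functional, so it has norm exactly $1$; hence any state extending $\varphi$ is a norm-preserving (norm $\|\varphi\|=1$) extension of $\varphi$, and by the uniqueness just cited it must equal $\tilde\varphi$. Conversely, if $\tilde\varphi(1)=1$ then $\tilde\varphi$ is itself unital of norm $1$, i.e.\ a state. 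Thus (1) is equivalent to the single condition $\tilde\varphi(1)=1$, and the whole lemma reduces to showing that $\tilde\varphi(1)=1$ is detected by the various limits.

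Next I would evaluate $\lim_t \varphi(e_t)$ along a partial cai. Since $\|\tilde\varphi\|=\|\varphi\|=\|\tilde\varphi|_A\|=1$, Lemma \ref{blmf}(1) applied to $\psi=\tilde\varphi$ gives $\lim_t \tilde\varphi(e_t)=\tilde\varphi(1)$ for every partial cai $(e_t)$; as $e_t\in A$ we have $\tilde\varphi(e_t)=\varphi(e_t)$, so $\lim_t\varphi(e_t)=\tilde\varphi(1)$ for every partial cai. In particular this limit exists and is independent of the chosen partial cai, which immediately yields the equivalences (1) $\Leftrightarrow$ (2) $\Leftrightarrow$ (3): if $\varphi(e_t)\to 1$ for one partial cai then $\tilde\varphi(1)=1$, and then $\varphi(e_t)\to 1$ for all of them.

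To fold in (4)--(6) I would use Lemma \ref{jcai}: $A^{**}$ has an identity $e$ of norm $1$, and every J-cai converges weak* to $e$. Every partial cai is a J-cai, since $e_t a,\, ae_t\to a$ forces $e_t\circ a\to a$, so the weak* convergence covers all the nets appearing in (2)--(6). Regarding $e\in A^{**}=(A^*)^*$ and evaluating at $\varphi\in A^*$, the convergence $e_t\to e$ weak* gives $\varphi(e)=\lim_t\varphi(e_t)$ for every partial cai and for every J-cai. Combining this with the previous paragraph we get $\varphi(e)=\tilde\varphi(1)$, so the common value of all these limits equals $\varphi(e)$; hence each of (3), (4), (5), (6) is equivalent to $\varphi(e)=1$, closing the chain of equivalences.

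The only delicate point is the first step: one must verify that ``$\varphi$ extends to a state'' is equivalent to a statement about the \emph{unique} extension $\tilde\varphi$ rather than about an arbitrary contractive extension, and this is precisely where the uniqueness in Lemma \ref{blmf}(2) together with the norm-$1$ normalization of states is essential. Everything else transcribes the complex argument of \cite[Lemma 2.21]{BWj}. One could instead complexify and apply that complex lemma to $\varphi_c$ on $A_c$, using $(A^1)_c=(A_c)^1$ from Corollary \ref{uniqueJoaunitization} and the cai correspondence of Lemma \ref{JOA_aprox_unital}; however that route must separately show that a partial cai $(g_t+i\,h_t)$ of $A_c$ forces $\varphi(h_t)\to 0$, a complication the direct argument above sidesteps.
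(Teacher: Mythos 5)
Your argument is correct and is exactly the route the paper intends: the paper's proof is simply the citation ``same as the complex case of \cite[Lemma 2.21]{BWj}'', and your write-up is a faithful transcription of that argument using the real-case ingredients the paper supplies for this purpose (the unique norm-preserving extension and the norm criterion of Lemma \ref{blmf}, plus the weak* convergence of every J-cai to the identity of $A^{**}$ from Lemma \ref{jcai}). Your closing observation about why states are captured by the \emph{unique} norm-preserving extension is the right point to be careful about, and it is handled correctly.
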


\begin{proof} The proof is the same as for the complex case in Lemma 2.21 in \cite{BWj}.  \end{proof}

The functionals on $A$ characterized in the last lemma are the {\em states} of $A$.

\section{Real positive elements and real positive maps} 

As we said in the introduction, if $A$ is a unital subspace or unital 
(Jordan) subalgebra of $B(H)$ then  then the set ${\mathfrak r}_{A}$ of real positive elements in $A$
does not depend on the 
particular $B(H)$ that $A$ sits in (isometrically and unitally).   A similar statement holds
 if $A$ is any (Jordan) subalgebra of $B(H)$.
Thus if $A$ is a Jordan operator algebra and $\pi : A \to B(K)$ is an isometric Jordan homomorphism
then, for example, $\pi(x) + \pi(x)^* \geq 0$ if and only if e.g.\ $\| 1 - tx \| \leq 1 + t^2 \| x \|^2$ for all $t > 0$.   Here $1$ is the identity of $A^1$, or the 
identity operator on $H$.  
This is a simple consequence of Meyer's theorem 
for Jordan operator algebras above.  Hence  $A \cap {\mathfrak r}_{A_c} = {\mathfrak r}_{A}$
if $A_c$ is any Jordan operator algebra complexification  of $A$.  Similarly  it is clear from Meyer's theorem above 
for Jordan operator algebras  that ${\mathfrak F}_A = A \cap {\mathfrak F}_{A_c}$.

In the following proof we write ${\rm Re}(x)$ for $\frac{1}{2} (x+ x^*)$.

\begin{lemma} \label{sprf}  If $X$ is a real 
unital operator space or Jordan operator algebra then 
${\mathfrak r}_X = \overline{\Rdb^+  {\mathfrak F}_X}$.
\end{lemma}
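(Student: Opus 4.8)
The plan is to prove the two inclusions separately, reducing as much as possible to the complex case via the operator space complexification $X_c$, which is legitimate because the excerpt has just recorded that ${\mathfrak r}_X = X \cap {\mathfrak r}_{X_c}$ and ${\mathfrak F}_X = X \cap {\mathfrak F}_{X_c}$. The easy inclusion is $\overline{\Rdb^+ {\mathfrak F}_X} \subset {\mathfrak r}_X$: we already know ${\mathfrak F}_X \subset {\mathfrak r}_X$ from the introduction, and ${\mathfrak r}_X$ is visibly a closed cone (it is $\{x : x + x^* \geq 0\}$, closed under nonnegative scaling and norm limits, since $x \mapsto x + x^*$ is continuous and the positive cone of $B(H)_{\mathrm{sa}}$ is closed), so it absorbs $\Rdb^+ {\mathfrak F}_X$ and its closure.

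The substantive inclusion is ${\mathfrak r}_X \subset \overline{\Rdb^+ {\mathfrak F}_X}$. First I would invoke the complex version of this very statement, namely \cite[proof/results in BWj, BRord]{} giving ${\mathfrak r}_{X_c} = \overline{\Cdb^+ {\mathfrak F}_{X_c}}$ — more precisely, the complex fact that every element of ${\mathfrak r}_{X_c}$ is a norm limit of elements of $\Rdb^+ {\mathfrak F}_{X_c}$. Given $x \in {\mathfrak r}_X \subset {\mathfrak r}_{X_c}$, the complex case produces $t_n \geq 0$ and $f_n \in {\mathfrak F}_{X_c}$ with $t_n f_n \to x$ in norm. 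The difficulty is that $f_n$ lives in the complexification, not in $X$ itself, so I cannot immediately conclude $x \in \overline{\Rdb^+ {\mathfrak F}_X}$. The natural fix is to apply the real-part/averaging projection: writing $f_n = a_n + i b_n$ with $a_n, b_n \in X$, I would argue that taking real parts in the complexification preserves membership in the relevant sets. The key elementary inequality (already exploited in the proof of Theorem \ref{frden} and in Theorem \ref{brord}) is that $\|1 - 2 a_n\| \leq \|1 - 2 f_n\| \leq 1$, so $a_n \in \frac{1}{2}{\mathfrak F}_X$, i.e.\ $2 a_n \in {\mathfrak F}_X$.

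The main obstacle is therefore to show that the real-part map $x + iy \mapsto x$ sends $t_n f_n$ close to $x$. Here I would use that $x \in X$ is fixed and real, that the conjugation $\iota_X : u + iv \mapsto u - iv$ is a complete (in particular isometric) map fixing $X$, and that the real-part projection $P = \frac{1}{2}(\mathrm{id} + \iota_X)$ is contractive with range $X$ and fixes $x$. Applying $P$ to $t_n f_n \to x$ gives $t_n \, P(f_n) = t_n a_n \to P(x) = x$ in norm, since $P$ is a norm-one projection. Combined with $2a_n \in {\mathfrak F}_X$ (so $t_n a_n = \tfrac{t_n}{2}(2a_n) \in \Rdb^+ {\mathfrak F}_X$), this exhibits $x$ as a norm limit of elements of $\Rdb^+ {\mathfrak F}_X$, completing the inclusion. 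The one point needing care is the unital-versus-Jordan-operator-algebra distinction: in the unital operator space case $1$ is the given unit, while in the Jordan operator algebra case one works in the unitization $X^1$ (well defined up to isometry by Section \ref{uanf}), but the inequality $\|1 - 2a_n\| \leq \|1 - 2f_n\|$ and the contractivity of $P$ hold in both settings, so the argument is uniform.
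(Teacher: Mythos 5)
Your argument for the Jordan operator algebra case is essentially the paper's: both pass to $X_c$, invoke the complex identity ${\mathfrak r}_{X_c}=\overline{\Rdb^+ {\mathfrak F}_{X_c}}$, and return to $X$ via the contractive real-part projection $\frac{1}{2}(\mathrm{id}+\iota_X)$, which fixes $x$ and sends ${\mathfrak F}_{X_c}$ into ${\mathfrak F}_{X}$ (the paper writes this as $a_t\in X\cap {\mathfrak F}_{X_c}={\mathfrak F}_X$ and $c_ta_t\to x$); the easy inclusion is also handled the same way. Two small points of divergence. First, for the real \emph{unital operator space} case the paper does not complexify but instead reruns the complex argument from early in \cite[Section 2]{BBS} directly; your complexification route works there too, but the complex input you need in that case is the unital-operator-space version of the identity from \cite{BBS}, not the (Jordan) operator algebra versions in \cite{BWj,BRord}, so you should adjust the citation --- your treatment is then uniform across both cases. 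Second, there is a harmless slip of constants: from $f_n\in{\mathfrak F}_{X_c}$ the projection gives $\|1-a_n\|\le\|1-f_n\|\le 1$, i.e.\ $a_n\in{\mathfrak F}_X$, not $\|1-2a_n\|\le 1$ (that estimate would require $f_n\in\frac{1}{2}{\mathfrak F}_{X_c}$); the conclusion $t_na_n\in\Rdb^+{\mathfrak F}_X$ is unaffected.
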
 

\begin{proof}  If $\| 1 - x \| \leq 1$ then $\| 1 - {\rm Re}(x) \| \leq 1$.   Therefore 
$-1 \leq 1 - {\rm Re}(x) \leq 1$, and 
so ${\rm Re}(x)  \geq 0$.   Thus $\overline{\Rdb^+  {\mathfrak F}_X}
\subset  {\mathfrak r}_X$.  The reverse inclusion in the unital operator space case can be 
proved as is done in the complex case early in \cite[Section 2]{BBS}.  
If $X$ is a Jordan operator algebra then
${\mathfrak r}_X \subset {\mathfrak r}_{X_c} 
= \overline{\Rdb^+  {\mathfrak F}_{X_c}}$.     Suppose that $x \in {\mathfrak r}_X$ and 
$c_t \, x_t \to x$ with $c_t \in \Rdb^+$ and $x_t = a_t + i b_t \in {\mathfrak F}_{X_c}$.
Then $a_t \in X \cap {\mathfrak F}_{X_c} = {\mathfrak F}_{X},$ and $c_t \, a_t \to x$.   So $x \in \overline{\Rdb^+  {\mathfrak F}_X}$.
 \end{proof} 

\begin{remark} 
As in \cite{BRord,BWj} we may consider the ${\mathfrak F}$-transform: By \cite[Lemma 2.5]{BRord}, 
if  $x \in {\mathfrak r}_A$ for a real Jordan operator algebra $A$ then $${\mathfrak F}(x) = x(x+1)^{-1}
\in A \cap \frac{1}{2} {\mathfrak F}_{A_c} = \frac{1}{2} {\mathfrak F}_{A}.$$
Indeed let $D$ be the real operator algebra generated by 1 and $x$.  Since $x + x^* \geq 0$, the numerical range of $x$ is in the right half plane.  Hence the spectrum in $D_c$  of $x$ is in the right half plane.     Hence -1 is not in that spectrum, so $1+x$ has an inverse in $D_c$, in fact in $D$ (since e.g.\ if $x (a+ib) = 1$ then $xa = 1$).    
Then  ${\mathfrak F}(x) = x(1+x)^{-1} \in AD \subset A$.   Also, $1 - 2 x(1+x)^{-1} = (1-x)(1+x)^{-1}$, which is essentially the Cayley transform, which has norm $\leq 1$.
In fact this map has range  $U_A \cap \frac{1}{2} {\mathfrak F}_{A}$, where $U_A = \{ a \in A : \| a \| < 1 \}$, as in  \cite[Lemma 2.5]{BRord}. 
  Indeed suppose that  $w\in U_A \cap \frac{1}{2} {\mathfrak F}_{A} \subset U_{A_c} \cap \frac{1}{2} {\mathfrak F}_{A_c}$.
  We may suppose that $A$ is the closed algebra generated by $w$, which is an operator algebra.  Then there exists 
$x \in {\mathfrak r}_{A_c}$ with ${\mathfrak F}(x) = w$.  However $x = w (1-w)^{-1} \in A$.
Using the ${\mathfrak F}$-transform one may give another proof of Lemma \ref{sprf}   in the spirit of
e.g.\ \cite[Theorem 3.3]{BRII}.
\end{remark}

\begin{lemma} \label{antis}  Let $A$ be an operator system or real $JC^*$-algebra.   Then $x \in A$ is antisymmetric if and only if
$x \in {\mathfrak r}_A \cap (-{\mathfrak r}_A)$.
\end{lemma}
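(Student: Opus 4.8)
The plan is to reduce the statement to the properness of the positive cone, since once the definitions are unwound there is essentially nothing else to prove. In either case $A$ is a selfadjoint subspace of some $B(H)$ with $H$ real (an operator system is by definition a selfadjoint unital subspace, and a real $JC^*$-algebra is a selfadjoint Jordan subalgebra of a real $C^*$-algebra). Hence for $x \in A$ the adjoint $x^*$ and the selfadjoint element $x + x^*$ are available, and the condition that $x$ be antisymmetric, namely $x^* = -x$, is exactly the condition $x + x^* = 0$. This is the reformulation I would work with throughout.

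For the forward implication I would simply note that if $x^* = -x$ then $x + x^* = 0$, which is trivially positive in $B(H)$, so $x \in \mathfrak{r}_A$; and since $-x$ is also antisymmetric, the same remark gives $x \in -\mathfrak{r}_A$. For the converse I would unwind the two membership conditions: $x \in \mathfrak{r}_A$ says $x + x^* \geq 0$ in $B(H)$, while $x \in -\mathfrak{r}_A$ means $-x \in \mathfrak{r}_A$, that is $(-x) + (-x)^* = -(x + x^*) \geq 0$. Thus $x + x^*$ is a selfadjoint operator satisfying both $x + x^* \geq 0$ and $x + x^* \leq 0$, and the whole argument comes down to concluding from this that $x + x^* = 0$, i.e. $x^* = -x$.

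The only genuine content, and the step I would flag as the sole (and minor) obstacle, is this last deduction, which is the properness of the positive cone of $B(H)$ in the real setting. I would establish it without appealing to numerical range, since the introduction warns that such arguments are unreliable on a real Hilbert space: setting $h = x + x^*$, the definition of positivity for $h$ in $B(H)$ coincides with positivity of $h$ in the complexification $B(H_c)$, so $h \geq 0$ and $-h \geq 0$ both hold in $B(H_c)$, forcing $h = 0$ there and hence in $B(H)$. Equivalently one observes that $h$ is selfadjoint with $\sigma(h) \subseteq [0,\infty) \cap (-\infty,0] = \{0\}$. Either route closes the argument, and for a real $JC^*$-algebra one uses in addition that $\mathfrak{r}_A$ is representation-independent (as recorded earlier in the excerpt) so that the computation may be carried out in any fixed $B(H)$ into which $A$ embeds.
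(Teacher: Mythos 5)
Your proof is correct and matches the paper's (implicit) argument: the paper states this lemma without a formal proof, but the identical reasoning --- that $x+x^*$ being both positive and negative forces $x+x^*=0$ --- appears verbatim in the proof of Theorem \ref{27} and in the discussion before Example \ref{expoly}. Your care in justifying the properness of the cone via the complexification or the spectrum, rather than via a numerical-range argument, is appropriate given the real-scalar pitfalls the paper warns about.
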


This is useful because there are many nice metric characterizations of ${\mathfrak r}_A$, as we said earlier 
when we defined that set (for example the conditions in \cite[Lemma 2.4]{BSan}).

\begin{lemma} \label{syjo}  If $X$ is a real operator system then $X = X_{\rm sa} \oplus X_{\rm as}$.  Also, $X_{\rm sa} = X_+ - X_+$, and 
$X = {\mathfrak r}_X - {\mathfrak r}_X = \Rdb^+ ( {\mathfrak F}_X - {\mathfrak F}_X)$.   \end{lemma}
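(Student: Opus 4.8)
The first claim $X = X_{\rm sa}\oplus X_{\rm as}$ is immediate from the decomposition noted before Lemma \ref{Tsyjo}: since $X$ is selfadjoint and unital, for $x\in X$ both $\frac12(x+x^*)\in X_{\rm sa}$ and $\frac12(x-x^*)\in X_{\rm as}$, and the sum is direct. For $X_{\rm sa}=X_+-X_+$, the inclusion $\supseteq$ is trivial since positive elements are selfadjoint; for $\subseteq$ I would take $x\in X_{\rm sa}$ and use that $-\|x\|\,1\le x\le \|x\|\,1$ in the ambient $B(H)$, so that $\|x\|\,1\pm x$ lie in $X_+$ (they are in $X$ because $1,x\in X$) and $x=\frac12\big((\|x\|\,1+x)-(\|x\|\,1-x)\big)$.

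For the last line, the plan is to prove the single equality $X=\Rdb^+({\mathfrak F}_X-{\mathfrak F}_X)$ first, and then obtain $X={\mathfrak r}_X-{\mathfrak r}_X$ for free by squeezing: since ${\mathfrak F}_X\subseteq {\mathfrak r}_X$ (noted in the introduction) and ${\mathfrak r}_X$ is a cone, $\Rdb^+{\mathfrak F}_X\subseteq {\mathfrak r}_X$, whence $X=\Rdb^+({\mathfrak F}_X-{\mathfrak F}_X)\subseteq {\mathfrak r}_X-{\mathfrak r}_X\subseteq X$. The key structural observation I would exploit is that ${\mathfrak F}_X=X\cap\{y:\|1-y\|\le 1\}$ is convex and contains $0$; consequently ${\mathfrak F}_X-{\mathfrak F}_X$ is convex, symmetric and contains $0$, so $\Rdb^+({\mathfrak F}_X-{\mathfrak F}_X)$ is in fact a \emph{linear subspace} of $X$. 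It therefore suffices to check that this subspace contains both $X_+$ and $X_{\rm as}$, since these span $X$ by the first two parts.

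To get $X_+\subseteq\Rdb^+{\mathfrak F}_X$ I would take $0\neq p\in X_+$ and note $0\le \tfrac{2}{\|p\|}p\le 2\cdot 1$, so $\|1-\tfrac{2}{\|p\|}p\|\le 1$, i.e.\ $\tfrac{2}{\|p\|}p\in{\mathfrak F}_X$ and hence $p\in\Rdb^+{\mathfrak F}_X$. The subtle point, and the one place where the real case genuinely differs, is the antisymmetric part: I expect the main obstacle to be that \emph{no} nonzero $a\in X_{\rm as}$ lies in $\Rdb^+{\mathfrak F}_X$, since $a^*=-a$ gives $(1-ta)^*(1-ta)=1+t^2a^*a\ge 1$, so $\|1-ta\|>1$ for $t\neq 0$. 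The remedy is to pass to a genuine difference: for $a\in X_{\rm as}$ and $t>0$ one computes, again using $a^*=-a$ (so $a^2=-a^*a$), that $\|1-t(1\pm a)\|^2=(1-t)^2+t^2\|a\|^2$. Hence for $0<t\le 2/(1+\|a\|^2)$ both $t(1+a)$ and $t(1-a)$ lie in ${\mathfrak F}_X$, and $2ta=t(1+a)-t(1-a)\in {\mathfrak F}_X-{\mathfrak F}_X$, giving $a\in\Rdb^+({\mathfrak F}_X-{\mathfrak F}_X)$. With $X_+$ and $X_{\rm as}$ both inside the subspace $\Rdb^+({\mathfrak F}_X-{\mathfrak F}_X)$, the first two parts of the lemma force equality with $X$, completing the proof.
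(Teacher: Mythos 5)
Your proof is correct, and the first two assertions are handled exactly as in the paper. For the final equalities, however, you take a longer road than the paper does, and the comparison is instructive. The paper simply reuses the same decomposition $x = \frac{1}{2}\big((\|x\|1+x)-(\|x\|1-x)\big)$ for \emph{arbitrary} $x\in X$: since $\big\Vert \|x\|1 - (\|x\|1\pm x)\big\Vert = \|x\|$, one has $\|x\|1\pm x \in \|x\|\,{\mathfrak F}_X \subset \Rdb^+{\mathfrak F}_X$ with no selfadjointness hypothesis, so $X = \Rdb^+({\mathfrak F}_X - {\mathfrak F}_X)$ in one line; the ${\mathfrak r}_X$ statement then follows from ${\mathfrak F}_X\subset{\mathfrak r}_X$ exactly as in your squeezing argument. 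Your detour --- showing that $\Rdb^+({\mathfrak F}_X-{\mathfrak F}_X)$ is a linear subspace because ${\mathfrak F}_X$ is convex and contains $0$, and then verifying membership separately for $X_+$ and for $X_{\rm as}$ --- is entirely valid: the computation $\|1-t(1\pm a)\|^2=(1-t)^2+t^2\|a\|^2$ for antisymmetric $a$ is correct, and your side observation that no nonzero antisymmetric element lies in $\Rdb^+{\mathfrak F}_X$ (since $\|1-ta\|^2 = 1+t^2\|a\|^2>1$) is a genuinely real-scalar phenomenon worth recording. But the antisymmetric part is not actually the obstacle you anticipated: the condition defining $\|x\|{\mathfrak F}_X$ is the norm condition $\big\Vert\|x\|1-y\big\Vert\le\|x\|$, not a positivity condition, so $\|x\|1+x$ satisfies it whether or not $x$ is selfadjoint. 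What your approach buys is a structural statement (the positive span of ${\mathfrak F}_X-{\mathfrak F}_X$ is automatically a subspace, so it suffices to hit a spanning set) that could be useful in settings where no unit-scaled translate of a general element lands in ${\mathfrak F}_X$; what the paper's approach buys is brevity and a uniform formula.
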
 

\begin{proof}  The first identity was established above Lemma  \ref{Tsyjo}. 
Let $x \in X_{\rm sa}$, then $$x = \frac{1}{2} (\| x \| 1 + x – ( \| x \| 1 - x ))
\in X_+ - X_+.$$     Similarly, since $\| x \| 1 - (\| x \| 1 \pm x)$ has norm $\leq \| x \|$, we have that 
$\| x \| 1 \pm x \in  \| x \|  {\mathfrak F}_X \subset \Rdb^+  {\mathfrak F}_X$.     The rest is clear.
 \end{proof}

If $X$ is a real operator system and $T : X \to B(H)$ we say that $T$ is {\em systematically real positive} if 
$x,y \in X$ with $x + y^* \in X_+$ implies that $T(x) + T(y)^* \geq 0$.

\begin{theorem}  \label{27}  Let $X$ be a real selfadjoint operator space  and let $T : X \to B(H)$ be real linear.  The following are equivalent:
\begin{itemize} \item [(i)]  $T$ is systematically real positive.
 \item [(ii)]  $T$ is both positive and selfadjoint. 
 \item [(iii)]  $T$ is real positive and selfadjoint. 
 \end{itemize}  \end{theorem}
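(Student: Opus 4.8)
The plan is to establish the cycle (i) $\Rightarrow$ (ii) $\Rightarrow$ (iii) $\Rightarrow$ (i), exploiting the decomposition $X = X_{\rm sa} \oplus X_{\rm as}$ from the discussion above Lemma \ref{Tsyjo} and the characterization of antisymmetric elements in Lemma \ref{antis}. The key reformulation is that ``systematically real positive'' is precisely the hypothesis that lets one test positivity on the \emph{selfadjoint} part while simultaneously forcing the antisymmetric part to behave: writing $x + y^*$ with $x,y \in X$ lets us feed in pairs that isolate each summand.

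First I would prove (i) $\Rightarrow$ (ii). For positivity, given a positive $x \in X_+$ (so $x = x^*$), apply the defining condition with $y = x$; since $x + x^* = 2x \in X_+$, systematic real positivity gives $T(x) + T(x)^* \geq 0$, and I expect a rescaling/symmetry argument to upgrade this to $T(x) \geq 0$ outright — this is where the main work lies. For selfadjointness, the natural route is Lemma \ref{Tsyjo}: I would show $T(X_{\rm sa}) \subset B(H)_{\rm sa}$ and $T(X_{\rm as}) \subset B(H)_{\rm as}$. The first inclusion follows from positivity together with $X_{\rm sa} = X_+ - X_+$ (Lemma \ref{syjo}). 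For the second, if $x$ is antisymmetric then $x \in {\mathfrak r}_X \cap (-{\mathfrak r}_X)$ by Lemma \ref{antis}, meaning both $x + x^* = 0$ and $-x - x^* = 0$ lie in $X_+$; plugging the pairs $(x,-x)$ and $(-x,x)$ into the systematic condition should force $T(x)$ into $B(H)_{\rm as}$.

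Next, (ii) $\Rightarrow$ (iii) is immediate once one observes that a positive element is in particular real positive (since $x \geq 0$ gives $x + x^* = 2x \geq 0$), so positive-and-selfadjoint implies real-positive-and-selfadjoint. Finally, (iii) $\Rightarrow$ (i): assume $T$ is real positive and selfadjoint, and suppose $x + y^* \in X_+$. Using selfadjointness, $T(y^*) = T(y)^*$, so $T(x) + T(y)^* = T(x + y^*)$; since $x + y^*$ is positive it is real positive, and real positivity of $T$ then yields $T(x+y^*) + (T(x+y^*))^* \geq 0$. Here selfadjointness again lets me replace the real-positive conclusion by genuine positivity: $T(x+y^*)$ is selfadjoint (as $x+y^*$ is), so ${\rm Re}(T(x+y^*)) = T(x+y^*) \geq 0$, which is exactly $T(x) + T(y)^* \geq 0$.

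The main obstacle I anticipate is the step in (i) $\Rightarrow$ (ii) going from $T(x) + T(x)^* \geq 0$ to $T(x) \geq 0$ for positive $x$: systematic positivity only directly controls $x + y^*$, so one must choose the auxiliary element $y$ cleverly, and the warning in the introduction — that $\langle T\xi,\xi\rangle \geq 0$ does not imply $T \geq 0$ in the real case — shows such arguments are delicate. The likely fix is to first secure selfadjointness of $T$ (via the antisymmetric-element argument, which uses only the two-sided membership in $X_+$ and does not require any numerical-range reasoning), and then positivity drops out cleanly, since a selfadjoint $T$ sends the positive element $x$ to a \emph{selfadjoint} operator whose real part $T(x)$ is already shown to be $\geq 0$ from $T(x) + T(x)^* = 2T(x) \geq 0$.
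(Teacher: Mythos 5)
There is a genuine gap in your (i) $\Rightarrow$ (ii) step, and it sits exactly where you flag ``the main work.'' You apply the systematic condition to the pair $(x,x)$ for $x \in X_+$, obtain only $T(x)+T(x)^* \geq 0$, and then hope to upgrade this to $T(x)\geq 0$ by a ``rescaling/symmetry argument.'' The resolution is much simpler: take $y=0$. Since $x + 0^* = x \in X_+$ and $T(0)=0$, the definition gives $T(x) + T(0)^* = T(x) \geq 0$ outright --- the conclusion of systematic real positivity is genuine positivity of $T(x)+T(y)^*$, not merely real positivity, and the freedom to choose $y$ independently of $x$ is the whole point of the definition. This is precisely what the paper does. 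Your proposed workaround --- establish selfadjointness first, then deduce positivity --- is circular as written: selfadjointness via Lemma \ref{Tsyjo} requires both $T(X_{\rm as})\subset B(H)_{\rm as}$ (which your antisymmetric-element argument does give) and $T(X_{\rm sa})\subset B(H)_{\rm sa}$, and your only route to the latter is ``positivity together with $X_{\rm sa}=X_+-X_+$,'' i.e.\ the very positivity you are deferring. The correct order is therefore: positivity first (via $y=0$), then $T(X_{\rm sa}) = T(X_+)-T(X_+) \subset B(H)_{\rm sa}$, then the antisymmetric part, then Lemma \ref{Tsyjo}.

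Two smaller points. In the antisymmetric step the pairs to plug in are $(x,x)$ and $(-x,-x)$ (giving $x+x^*=0\in X_+$ and $-x+(-x)^*=0\in X_+$, hence $T(x)+T(x)^*$ both $\geq 0$ and $\leq 0$, so $T(x)^*=-T(x)$); the pairs $(x,-x)$ and $(-x,x)$ you name produce $x+(-x)^*=2x$ when $x^*=-x$, which need not lie in $X_+$. The remaining implications --- (ii) $\Leftrightarrow$ (iii) for selfadjoint maps, and (iii) $\Rightarrow$ (i) via $T(x)+T(y)^*=T(x+y^*)$ --- are correct and agree with the paper's proof.
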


\begin{proof} Clearly a selfadjoint map is positive if and only if it is real positive, and if these holds then 
$T : X \to B(H)$ is systematically real positive.   Conversely suppose that  $T$ is systematically real positive.
Let $x \in X_+$.   Then $T(x) + T(0)^* \geq 0$, so $T$ is positive.   
By Lemma \ref{syjo}  we have  $T(X_{\rm sa}) = T(X_+) - T(X_+) \subset B(H)_{\rm sa}$.  
 If $x^* = -x$ then $x + x^* = 0$, so that $T(x) + T(x)^*$ is both positive 
and negative.  Hence $T(x)^* = - T(x)$.   That is, $T( X_{\rm as}) \subset Y_{\rm as}$.   
So $T$ is selfadjoint by Lemma \ref{Tsyjo}.  
\end{proof}

\begin{theorem}  \label{db8}
If $T: A\to B(H)$ is a real positive linear map on a unital operator space $A$ whose restriction to 
$\Delta(A) = A \cap A^*$ is selfadjoint (or systematically real positive).  
Then the canonical extension $\tilde{T}: A+A^*\to B(H): x+y^*\mapsto T(x)+T(y)^*$ is well defined,  selfadjoint, and  positive.
\end{theorem}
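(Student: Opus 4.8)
The plan is to treat the three assertions---well-definedness, selfadjointness, and positivity---in turn, with essentially all of the work concentrated in the first. First I would verify that $\tilde{T}$ is well defined. Suppose $x + y^* = x' + (y')^*$ with $x,y,x',y' \in A$, and set $a = x - x'$. Then $a \in A$ and $a = (y' - y)^* \in A^*$, so $a \in \Delta(A)$ and $a^* = y' - y \in A$. The identity $T(x) + T(y)^* = T(x') + T(y')^*$ that must be checked rearranges, using linearity, to $T(a) = T(a^*)^*$, which is exactly the statement that $T$ is selfadjoint on $\Delta(A)$. Thus well-definedness is \emph{equivalent} to the selfadjointness hypothesis. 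Here I would observe that the two forms of the hypothesis coincide: since $\Delta(A)$ is a real selfadjoint operator space and $T|_{\Delta(A)}$ is real positive (the real positive elements of $\Delta(A)$ are real positive in $A$, hence are sent by $T$ into ${\mathfrak r}_{B(H)}$), Theorem \ref{27} shows that `systematically real positive' and `selfadjoint' agree on $\Delta(A)$. So in either case $T|_{\Delta(A)}$ is selfadjoint and $\tilde{T}$ is well defined.

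Selfadjointness of $\tilde{T}$ is then immediate and formal: for $z = x + y^*$ one has $\tilde{T}(z)^* = T(x)^* + T(y)$, while $z^* = y + x^*$ gives $\tilde{T}(z^*) = T(y) + T(x)^*$, and these agree.

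For positivity, the key observation is that every selfadjoint element $w$ of the operator system $A + A^*$ has the form $w = u + u^*$ for a \emph{single} $u \in A$: writing $w = x + y^*$ with $w = w^*$ and putting $u = \frac{1}{2}(x+y) \in A$ yields $w = \frac{1}{2}\bigl((x+y) + (x+y)^*\bigr) = u + u^*$. Now if $z \in (A+A^*)_+$ then $z$ is selfadjoint, so $z = u + u^*$ with $u \in A$, and $u + u^* = z \geq 0$ means precisely that $u \in {\mathfrak r}_A$. By the definition of $\tilde{T}$ (and its well-definedness, which frees us from the choice of representation) we have $\tilde{T}(z) = T(u) + T(u)^*$, which is $\geq 0$ because $T$ is real positive, i.e.\ $T(u) \in {\mathfrak r}_{B(H)}$. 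Hence $\tilde{T}$ is positive.

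The only genuinely delicate point is the well-definedness step, where selfadjointness of $T$ on $\Delta(A)$ is exactly the obstruction to overcome; once that is secured, the positivity argument is purely algebraic and, conveniently, sidesteps the numerical-range and state-space failures highlighted in the introduction, since it reduces positivity of $\tilde{T}(z)$ directly to real positivity of the single element $u$ satisfying $z = u + u^*$.
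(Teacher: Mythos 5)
Your proposal is correct and follows essentially the same route as the paper's own proof: well-definedness reduces to selfadjointness of $T$ on $\Delta(A)$ via the observation that $x-x' = (y'-y)^* \in \Delta(A)$, and positivity follows by rewriting a positive $z = x+y^*$ as $u+u^*$ with $u = \frac{1}{2}(x+y) \in {\mathfrak r}_A$ and applying real positivity of $T$. The only additions are the (correct) explicit check of selfadjointness of $\tilde{T}$ and the reconciliation of the two forms of the hypothesis via Theorem \ref{27}, both of which the paper leaves implicit.
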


\begin{proof}
Let $T : A \to B(H)$ be real positive with $T$ restricted to $\Delta(A)$ being
selfadjoint. 
Define $\tilde{T}(a + b^*) = T(a) + 
T(b)^*$
for $a, b \in A$.  
To see that   $\tilde{T}$ is well defined, suppose $a + b^* = x+ y^*$,  for $a, b, x, y \in A$.  Then $a-x = (y-b)^* \in \Delta(A)$, and 
so $$T(a-x) = T((y-b)^*) = (T(y) - T(b))^* .$$  Thus, $T$ is well defined.
If $z = a + b^*$ is positive (usual sense), then $$z = z^* = b + a^* = \frac{1}{2} (a + 
b^* + b + a^*)
= \frac{1}{2} (a + b) + (\frac{1}{2} (a + b))^*,$$ and $\frac{1}{2} (a + b) \in {\mathfrak r}_A$.  Since $T$ is real positive 
we have
 $$\tilde{T}(z) = T(\frac{1}{2}(a + b)) + T(\frac{1}{2}(a + b))^* \geq 0.$$   So $\tilde{T}$ is positive.
\end{proof}

     The converse of the theorem is true: 
 if $\tilde{T}$ is well defined,  selfadjoint, and  positive, then $T$ is real positive
 and its restriction to 
$\Delta(A) = A \cap A^*$ is selfadjoint  and systematically real positive.

If $X$ is a unital real operator space then we say that a real linear map $T : X \to B(H)$ is {\em systematically real positive} if 
$T$ extends to a positive selfadjoint map on $X + X^*$.    This is equivalent, by the theorem, to 
$T$ being real positive with $T$ restricted to $\Delta(A)$ being
selfadjoint.    It is also equivalent to: $x,y \in X$ with $x + y^* \geq 0$ implies that $T(x) + T(y)^* \geq 0$.   
One way to see the last equivalence is to note that this condition implies that  $T$ restricted to $\Delta(A)$
is systematically real positive, hence selfadjoint by Theorem  \ref{27}.  Then apply the last theorem.

Note that a real positive linear map from an operator system $X$ into $B(H)_{\rm sa}$ is
systematically real positive.   Indeed in this case $T(X_{\rm sa}) \subset B(H)_{\rm sa}$,
and as in the proof of Theorem  \ref{27} above $T(X_{\rm as}) \subset (0) \subset B(H)_{\rm sa}$.
So $T$ is selfadjoint by Lemma \ref{Tsyjo}.

We discuss now the meaning of   $A+A^*$ for a real operator algebra or real Jordan operator algebra $A$.   If  $A$ is also equipped with a compatible 
operator space structure (that is, if $A$ is a Jordan subalgebra of $B(H)$ and has the inherited matrix norms), then this is relatively unproblematic.
This is essentially the case treated in  \cite{WTT}.  The point is that a completely isometric (Jordan) homomorphism $\theta : A \to B$ between
real (Jordan) operator algebras extends to a completely isometric (Jordan) homomorphism between their complexifications.
By the complex theory this extends to a completely isometric (Jordan) homomorphism between the unitizations of the complexifications,
and then to a completely isometric  UCP map $(A_c)^1 + ((A_c)^1)^* \to (B_c)^1 + ((B_c)^1)^*$.   This restricts to a
completely isometric selfadjoint complete order isomorphism $A+A^* \to B + B^*$.    

On the other hand, if we treat $A$ without using matrix norms, that is, use morphisms that are isometric (Jordan) homomorphisms,
then it seems that the meaning of   $A+A^*$ is more problematic.   That is, we do not know at present if 
an isometric (Jordan) homomorphism $\theta : A \to B$ extends to an  isometric selfadjoint  order isomorphism $A+A^* \to B + B^*$. 
However  we will know shortly from Lemma \ref{ApA}  that it extends to a selfadjoint  order isomorphism $\tilde{\theta} 
: A+A^* \to B + B^*$.     There is a canonical norm on $A+A^*$
for which the latter map is an isometry (namely the norm inherited from $C^*_{\rm max}(A)$, the universal $C^*$-algebra for contractive 
(Jordan) homomorphisms from $A$, see \cite{WTT}), but we do not know yet if this norm always agrees with the one mentioned above in the last paragraph 
using a suitable operator space structure on $A$.    (It does follow from the later result Lemma \ref{rsu}  that   $\tilde{\theta}$ is isometric 
on the selfadjoint part of $A+A^*$.)    In any case, if we concern ourselves only with 
the order structure on $A+A^*$ there are no problems.      

\begin{lemma} \label{ApA}  Let $\theta : A \to B$ be a contractive (Jordan) homomorphism
between  real (Jordan) operator algebras.  Then $\theta$ extends uniquely to a  selfadjoint  positive map $\tilde{\theta} 
: A+A^* \to B + B^*$.    Also, restriction of $\theta$ to $\Delta(A)$ is a (Jordan) $*$-homomorphism into $\Delta(B)$.
If $\theta$ is also an isometric isomorphism onto $B$ then 
$\tilde{\theta}$ is a selfadjoint  order isomorphism onto $B+B^*$.  
\end{lemma}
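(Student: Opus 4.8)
The plan is to prove the diagonal statement first, then to define $\tilde{\theta}$ by the evident formula and check that it is well defined, selfadjoint and unique, and finally to deduce positivity by reducing to the real positivity of $\theta$ itself. Throughout I would avoid complexification for the metric content: by Proposition \ref{wtex} a contractive homomorphism need not complexify to a contraction, so the argument must stay inside the real category and lean on Meyer's theorem instead.

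I would begin with the assertion about $\Delta(A)$. Since $\Delta(A) = A \cap A^*$ is a real $JC^*$-algebra (a real $C^*$-algebra in the associative case), and $B \subseteq B(K)$ with $B(K)$ a real $C^*$-algebra, the restriction $\theta|_{\Delta(A)} : \Delta(A) \to B(K)$ is a contractive (Jordan) homomorphism between real $JC^*$-algebras. Theorem \ref{contractive-hom} then makes it selfadjoint, hence a (Jordan) $*$-homomorphism, and positive. Selfadjointness gives, for $x \in \Delta(A)$, that $\theta(x)^* = \theta(x^*) \in B$ (using $x^* \in \Delta(A)$), so that $\theta(x) \in B \cap B^* = \Delta(B)$; thus the image lies in $\Delta(B)$ as claimed.

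Next I would set $\tilde{\theta}(a + b^*) = \theta(a) + \theta(b)^*$ for $a, b \in A$. For well-definedness, if $a + b^* = x + y^*$ then $a - x = (y-b)^* \in \Delta(A)$, and the selfadjointness of $\theta|_{\Delta(A)}$ from the previous step forces $\theta(y - b) = \theta(a-x)^*$, whence the two candidate values of $\tilde{\theta}$ agree; this is exactly the computation in the proof of Theorem \ref{db8}. The map $\tilde{\theta}$ is selfadjoint by construction, and since any selfadjoint extension of $\theta$ is forced to satisfy $\tilde{\theta}(b^*) = \theta(b)^*$, it is the unique such extension. For positivity I would first record that $\theta$ is real positive. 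By Meyer's theorem (Theorem \ref{Meyer-Real-Unique}), after representing $A$ nonunitally if necessary, $\theta$ extends to a unital contraction $\theta^o : A^1 \to B(K)$; a unital contraction is real positive (immediate from ${\mathfrak F}_X \subseteq {\mathfrak r}_X$, the estimate $\|1 - \theta^o(a)\| \leq \|1 - a\|$, and Lemma \ref{sprf}), so restricting to $A$ shows $\theta({\mathfrak r}_A) \subseteq {\mathfrak r}_B$. Now if $z = a + b^*$ is positive then $z = z^*$, so $z = {\rm Re}(a+b) = \frac{1}{2}(a+b) + (\frac{1}{2}(a+b))^*$ with $\frac{1}{2}(a+b) \in {\mathfrak r}_A$; applying $\tilde{\theta}$ and using real positivity of $\theta$ yields $\tilde{\theta}(z) = \theta(\frac{1}{2}(a+b)) + \theta(\frac{1}{2}(a+b))^* \geq 0$, exactly as in Theorem \ref{db8}.

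Finally, for the isometric isomorphism case, $\theta^{-1} : B \to A$ is again a contractive (Jordan) homomorphism, so the first part yields a selfadjoint positive extension $\widetilde{\theta^{-1}} : B + B^* \to A + A^*$. The composite $\widetilde{\theta^{-1}} \circ \tilde{\theta}$ is a selfadjoint extension of $\mathrm{id}_A$, hence equals $\mathrm{id}_{A+A^*}$ by the uniqueness just established, and symmetrically $\tilde{\theta} \circ \widetilde{\theta^{-1}} = \mathrm{id}_{B+B^*}$; thus $\tilde{\theta}$ is a bijection onto $B + B^*$ with positive inverse, i.e.\ a selfadjoint order isomorphism. I expect the main obstacle to be the real positivity of $\theta$ in the nonunital case: this is precisely where complexification fails, and one must route through Meyer's unitization theorem together with the description ${\mathfrak r}_A = \overline{\Rdb^+ {\mathfrak F}_A}$ of Lemma \ref{sprf}.
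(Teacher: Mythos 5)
Your proposal is correct and follows essentially the same route as the paper: Theorem \ref{contractive-hom} for the diagonal assertion, the construction from the proof of Theorem \ref{db8} for the well-defined selfadjoint positive extension, and composition with the extension of $\theta^{-1}$ for the order-isomorphism statement. The one place you add value is in making explicit the real positivity of $\theta$ (via Meyer's theorem, Theorem \ref{Meyer-Real-Unique}, and Lemma \ref{sprf}), a step the paper's citation of ``the proof of Theorem \ref{db8}'' leaves implicit.
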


\begin{proof}      We know that $\Delta(A)$ is a real $JC^*$-algebra.    Suppose that $B$ is a Jordan subalgebra of $B(H)$.
By Theorem \ref{contractive-hom},
 the restriction of $\theta$ to $\Delta(A)$ is a Jordan $*$-homomorphism into $B(H)$.
 In particular it is
selfadjoint, and maps into $\Delta(B)$.   The proof of Theorem  \ref{db8} now gives the first assertion.   Alternatively, one may extend $\theta$ to a unital contractive (Jordan) homomorphism between unitizations of $A$ and $B$ by  
Theorem \ref{Meyer-Real-Unique}, and then appeal to the {\em statement} of 
Theorem  \ref{db8}.  
The second assertion follows from the first applied to 
$\theta$ and $\theta^{-1}$.
\end{proof}

For a real operator algebra or real Jordan operator algebra $A$ then we say that a real linear map $T : A \to B(H)$ is {\em systematically real positive} if 
$T$ is real positive with $T$ restricted to $\Delta(A) = A \cap A^*$ being
selfadjoint.    It is also equivalent to the canonical extension $\tilde{T}: A+A^* \to B(H): x+y^*\mapsto T(x)+T(y)^*$ being well defined,  selfadjoint, and  positive.  
Indeed  the proof of Theorem  \ref{db8} shows that if $T$ is real positive with $T$ restricted to $\Delta(A)$ 
selfadjoint then $\tilde{T}$ is well defined,  selfadjoint, and  positive. 
Conversely, if the last condition holds then clearly $T$ is real positive and $T$ restricted to $\Delta(A) = A \cap A^*$ is 
selfadjoint.  
The latter implies that $x + y^* \geq 0$ implies that $T(x) + T(y)^* \geq 0$, but we are not sure if this condition is equivalent.


\begin{remark}  Another class of maps that one could consider are the maps $T : A \to B$ that extend to a real positive 
map on a (Jordan) operator algebra complexification.    Then of course there are various variants of this class, such as 
contractions that extend to a real positive contraction on such a complexification.    
\end{remark}

\begin{lemma} \label{ispos}  Let $T : A \to B$ be a systematically real positive 
map between  real Jordan operator algebras.  Then $T(\Delta(A)) \subset \Delta(B)$, and $T$ restricts to a positive
selfadjoint  linear map 
from $\Delta(A)$ to $\Delta(B)$.   Thus  $0 \leq T(1) \leq 1$ if $A$ is unital and $T$ is contractive.
\end{lemma}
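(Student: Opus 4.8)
The plan is to unwind the definition of \emph{systematically real positive} and exploit that it packages exactly the two ingredients one needs: that $T$ is real positive, and that $T$ restricted to $\Delta(A)$ is selfadjoint. Throughout I would fix a representation $B \subseteq B(H)$, so that adjoints and the order on $\Delta(B) = B \cap B^*$ are literally available; since $\Delta(A)$ and $\Delta(B)$ are real $JC^*$-algebras (as noted in the introduction), their positive cones are intrinsic and the conclusions will not depend on this choice. I would first establish $T(\Delta(A)) \subseteq \Delta(B)$. Given $x \in \Delta(A)$, the element $x^*$ again lies in $\Delta(A) \subseteq A$, so $T(x^*) \in B$; selfadjointness of $T$ on $\Delta(A)$ gives $T(x)^* = T(x^*)$, whence $T(x)^* \in B$, and combined with $T(x) \in B$ this yields $T(x) \in B \cap B^* = \Delta(B)$. (Equivalently one may invoke the characterization recorded after Theorem \ref{db8}, that systematic real positivity is the same as the canonical extension $\tilde T$ being well defined, selfadjoint and positive, and read off the same conclusion from selfadjointness of $\tilde T$.)

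Next I would prove positivity on the diagonal. Let $a \in \Delta(A)$ with $a \geq 0$. Then $a = a^*$, so $a + a^* = 2a \geq 0$; that is, $a$ is real positive in $A$, and since $T$ is real positive we get $T(a) + T(a)^* \geq 0$. But $a = a^*$ also forces $T(a) = T(a^*) = T(a)^*$ by selfadjointness, so $T(a)$ is selfadjoint and $T(a) = \frac{1}{2}(T(a) + T(a)^*) \geq 0$. Together with the selfadjointness hypothesis and the previous paragraph, this shows that $T$ restricts to a positive selfadjoint linear map $\Delta(A) \to \Delta(B)$, as claimed.

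For the final clause, suppose $A$ is unital and $T$ is contractive. Then $1 \in \Delta(A)$ with $1 \geq 0$, so $T(1) \geq 0$ by the positivity just established. Contractivity gives $\|T(1)\| \leq \|1\| = 1$, and a positive operator is dominated by its norm times the identity, so $T(1) \leq \|T(1)\|\, 1 \leq 1$; hence $0 \leq T(1) \leq 1$.

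I do not anticipate a serious obstacle: the statement is essentially a bookkeeping consequence of the definition, of Theorem \ref{db8}, and of the spectral (hence representation-independent) notion of positivity in a real $JC^*$-algebra. The one micro-point that merits care is the final inequality, where it is crucial that $T(1)$ be genuinely positive, i.e.\ selfadjoint, rather than merely real positive, so that the domination $T(1) \leq \|T(1)\|\, 1$ is legitimate; this is precisely what the selfadjointness-on-$\Delta(A)$ clause in the definition of systematic real positivity supplies, and is the reason the hypothesis cannot be weakened to plain real positivity here.
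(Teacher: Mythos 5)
Your proof is correct and is essentially the paper's argument in unwound form: the paper simply cites that the canonical extension $\tilde{T}$ on $A+A^*$ is positive and selfadjoint and restricts it to $\Delta(A)$, which is exactly the content of your direct verification from the two clauses of the definition (and you note this equivalence yourself). The explicit checks you supply -- that $T(x)^* = T(x^*) \in B$ forces $T(\Delta(A)) \subseteq \Delta(B)$, and that $T(1) \leq \Vert T(1)\Vert\, 1$ requires genuine selfadjointness of $T(1)$ -- are accurate and fill in details the paper leaves implicit.
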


\begin{proof}    As above, $\tilde{T}: A+A^* \to B(H)$ 
is positive and selfadjoint.   Hence so is its restriction to $\Delta(A)$.   
\end{proof}

Let $T : X \to B(H)$ be a unital linear contraction on a unital operator space.   Then $T$ is real positive in the sense that 
$T$ takes ${\mathfrak r}_X$ to real positive operators.   This follows from the fact that 
${\mathfrak r}_X = \overline{\Rdb^+  {\mathfrak F}_X}$ (see Lemma \ref{sprf}).  
 However we shall see that $T$ need not be  systematically real positive.   Indeed if $X$ is a real operator system  then 
$T$ need not be selfadjoint,  although $T$ is antisymmetric (that is, $T( X_{\rm as}) \subset Y_{\rm as}$).    Indeed if $x^* = -x$ then $x + x^* = 0$, so that $T(x) + T(x)^*$ is both positive and negative.  Hence $T(x)^* = - T(x)$.

\begin{example} \label{expoly} Let $X$ be the polynomials with real coefficients of degree $\leq 1$ in $C([0,1])$.  
Let $x(t) = t$, and let $z$ be the matching monomial in the disk algebra.    Let $g = \frac{1+z}{2}$, a 
contraction in the disk algebra, and define $T(s + tx) = s + t g$ for $s, t \in \Rdb$.   We claim that 
$T$ is a unital contraction on $X$ which is not selfadjoint, nor systematically real positive, nor positive.  
 Indeed the norm of $s + t g$ in the disk algebra
is easily seen to be $|s+t/2| + |t|/2$, whereas it is an exercise that the norm of $s + t x$ in $X$ is $\max \{ |s|, |s+t| \}$.
Finally, by considering the cases that $s, t,$ and $s+t/2$ are positive and negative, one can prove easily that 
$$|s+t/2| + |t|/2 \, \leq \, \max \{ |s|, |s+t| \} \, , \qquad s, t \in \Rdb.$$
That is, $T$ is a unital  contraction on $X$.  It clearly is not selfadjoint, hence is not systematically real positive.

This example illustrates some other points.   First, although $T$ is a unital  contraction on $X$, and although
$X$ is so very simple, $T$ is not a complete contraction.  Indeed if it were then by \cite[Theorem 2.1]{RComp} 
it  extends to a unital  complete contraction on the complexification.    This extension would have to be selfadjoint
by the well known complex case \cite{Arv,Pau}, 
giving the contradiction that $T$ is selfadjoint.   Also, there is not a real version of the 
completely contractive version of von Neumann's inequality (sometimes attributed to Sz-Nagy,
and following from the Sz-Nagy dilation), else $T$ would be completely contractive.   There is
a real form of von Neumann's inequality.  (Indeed if  $T$ is a contraction in $B(H) \subset B(H)_c$ then the map 
 $p + \bar{q} \mapsto p(T) + q(T)^*$ is a positive  (hence completely positive, by \cite[Theorem 3.11]{Pau}) unital 
 contraction from a dense selfadjoint subspace of $C(\Tdb,\Cdb)$ into $B(H)_c$.
 The restriction to the  set of $p + \bar{q}$ for polynomials $p, q$ with real coefficients, is a 
 positive selfadjoint contraction into $B(H)$.)

Note that $R(s + tx) =  T(s + tx) \oplus (s + tx) \in A(\Ddb) \oplus^\infty X$ is a unital isometry
which is not systematically real positive, nor selfadjoint, nor positive.
Thus if  $X$ is a unital operator space, $X + X^*$  need not be `well defined' as an ordered Banach space.   Indeed in the 
above example $X$ and $T(X)$ are `isometrically the same' as unital operator spaces via  the unital isometry $T$.
However  $X + X^*$ has dimension 2, while $T(X) + T(X)^*$  has dimension 3, so $T$ 
certainly does not extend to a faithful map on $X + X^*$, let alone a positive selfadjoint isometric one, nor  an order isomorphism onto its range.  
  On the other hand $X + X^*$  is `well defined' as an ordered linear space if we use morphisms on a unital operator space $X$ that are 
real positive unital isometries which is selfadjoint on $\Delta(X)$.   Indeed if $T : X \to Y$ is a surjective unital isometry
between unital operator spaces $X$ and $Y$, and if $T$ and $T^{-1}$ are systematically real positive, then 
 the canonical extension $\tilde{T}: A+A^*\to B(H): x+y^*\mapsto T(x)+T(y)^*$ is  selfadjoint and an order embedding.  
 Also, $X + X^*$  is `well defined' as an operator system, if we use morphisms on a unital operator space $X$ that are 
 unital complete isometries.    See the remark above Theorem \ref{contractive-hom}. 
\end{example}

\begin{example}  \label{rpne} A  real positive map, or systematically real positive map, need not extend to a real positive map  on a complexification.
A positive selfadjoint map on a real operator system need not extend to a  positive map  on a complexification. 
Also,  a unital linear contraction need not extend to a real positive map  on a fixed complexification.  
For an example of these, we proceed as in Proposition \ref{wtex}.  
Let $u : Y \to X$ be a linear contraction  that does not extend to a 
contraction from $Y_c$ to $X_c$.     Let  $\theta_u$ be the canonical extension 
of $u$ to a unital contractive homomorphism  ${\mathcal U}(Y) \to {\mathcal U}(X)$.   
As we said in Proposition \ref{wtex}, by Theorem \ref{db8}, or by the real variant of the Paulsen lemma in \cite[Lemma 4.12]{Sharma} (see also p.\ 492
in \cite{ROnr}), 
$\theta_u$ extends to a positive selfadjoint  map $\theta_u + (\theta_u)^*$ on  the Paulsen system ${\mathcal S}(Y) = 
{\mathcal U}(Y) + {\mathcal U}(Y)^*$ (which is even
real contractive by Lemma \ref{rsu}).   Thus $\theta_u$ is systematically real positive.  
But it does not extend to a real positive map  on the complexification, by the argument in 
Proposition \ref{wtex}.   
 \end{example}

\begin{example}  A positive functional even on a real $C^*$-algebra need not be selfadjoint nor real positive.
The example above Proposition 4.1 in \cite{ROnr} shows this: apply that functional 
to the real positive matrix with 
all entries $1$ except for a -3 in the $1$-$2$ corner.
Note that if we scale this example  to be a unital functional $\psi$, then $\Vert \psi \Vert \geq 1$.
But in fact $\Vert \psi \Vert > 1$ since if $\Vert \psi \Vert = 1$ then $\psi$ would be selfadjoint
by the next result.  \end{example}  

\begin{example}  \label{f} A positive unital selfadjoint real linear 
map on a complex operator system need not be a contraction.  There is a $2 \times 2$ matrix counterexample due to Arveson 
see e.g.\ A.2 in \cite{Arv}.   Viewing this as a 
real operator system this map is unital and real positive, but not a contraction.  

Indeed a positive unital selfadjoint real linear 
map on a real $JC^*$-algebra need not be bounded.   For an example of this let $E$ be an infinite dimensional
space of selfadjoint operators on a Hilbert space $H$ with $x^2 \in \Rdb I_H$ for all $x \in E$.   Let  $A$
be the set of matrices 
in $M_2(B(H))$ with diagonal entries in $\Rdb I_H$, and off-diagonal entries $x$ and $-x$ for $x \in E$.
Note that $A$ is a real $JC^*$-algebra.  We have $A_{\rm sa} = (\Rdb I_H) \oplus (\Rdb I_H)$,
the positive elements in $A$ are $(\Rdb_+ \, I_H) \oplus (\Rdb_+ \, I_H)$, and 
$A_{\rm as}$ consists of the matrices in $A$ with zero main diagonal entries.  
If $T : E \to B(K)_{\rm sa}$ is an unbounded real linear map let $\theta_T$ be the map on $A$
taking $$\begin{bmatrix}
       \lambda \,  I     & x\\
        -x   & \mu \, I
    \end{bmatrix} \; \mapsto \; \begin{bmatrix}
        \lambda \, I     & Tx\\
        -Tx   & \mu \, I
    \end{bmatrix} .$$ This is a positive unital selfadjoint real linear 
map   on a real $JC^*$-algebra which is not  bounded.    If $K = \Rdb$ in   Example  \ref{f}  then $\theta_T : A \to M_2$.
This positive map is not $2$-positive, otherwise by Lemma \ref{inMn} it would be completely positive, and hence 
completely contractive by Lemma \ref{lemos}.     \end{example} 

\begin{lemma} \label{sfun}    For a functional $\varphi$ on a unital real operator space  or 
approximately unital real Jordan operator algebra $X$ the following are equivalent:
\begin{itemize} \item [(i)] $\varphi$ is real positive.
\item [(ii)]  $\varphi$ is systematically real positive.  
 \item [(iii)]  $\varphi$ is real completely  positive (RCP). 
 \item [(iv)] $\varphi$ is a nonnegative multiple of a state.  
 \end{itemize} 
Such functionals are bounded with $\| \varphi \|  = \| \varphi \|_{\rm cb}$.
This equals $\varphi(1)$, or $\lim_t \, \varphi(e_t)$ in the case of 
a cai $(e_t)$.  
 If $\varphi$  is unital then the above equivalent conditions
 hold iff $\varphi$ is contractive. 
 If $X$ is a real operator system then a unital functional $\varphi$ on $X$  is contractive if and only if it is  positive and selfadjoint; such a functional  is  completely positive.  
  \end{lemma}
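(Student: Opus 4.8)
The plan is to run the cycle (iii)$\Rightarrow$(i)$\Rightarrow$(ii)$\Rightarrow$(iii)$\Rightarrow$(iv)$\Rightarrow$(i) and then harvest the norm equalities and the two final clauses. Throughout one uses that a functional takes values in $\Rdb = B(\Rdb)_{\rm sa}$, so that real positivity of $\varphi$ just means $\varphi(x)\geq 0$ for $x\in{\mathfrak r}_X$, and selfadjointness means $\varphi(x^*)=\varphi(x)$. The implication (iii)$\Rightarrow$(i) is trivial (take $n=1$). For (i)$\Rightarrow$(ii) the key observation is that real positivity of a \emph{functional} already forces selfadjointness on $\Delta(X)=X\cap X^*$, which is an operator system (unital case) or a real $JC^*$-algebra (Jordan case). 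By Lemma \ref{antis} any antisymmetric $x\in\Delta(X)$ lies in ${\mathfrak r}_{\Delta(X)}\cap(-{\mathfrak r}_{\Delta(X)})\subset {\mathfrak r}_X\cap(-{\mathfrak r}_X)$, so $\varphi(x)\geq 0$ and $-\varphi(x)\geq 0$, whence $\varphi(x)=0$. By Lemma \ref{Tsyjo} this says $\varphi|_{\Delta(X)}$ is selfadjoint, so $\varphi$ is systematically real positive by definition.

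For (ii)$\Rightarrow$(iii) I would first produce, via Theorem \ref{db8} (unital case) or the extension construction around Lemmas \ref{ApA} and \ref{ispos} (Jordan case), the canonical functional $\tilde\varphi:X+X^*\to\Rdb$ that is well defined, selfadjoint and positive. The crux---and the step where the real case genuinely departs from the complex one---is to upgrade positivity of $\tilde\varphi$ to complete positivity. Given $w\in M_n(X)$ with $w+w^*\geq 0$ one has $\tilde\varphi_n(w+w^*)=\varphi_n(w)+\varphi_n(w)^*$, so it is enough to see $\tilde\varphi_n$ is positive on positive matrices. For $[s_{ij}]\geq 0$ over $X+X^*$ and real $\xi\in\Rdb^n$, factoring $[s_{ij}]=[c_{ij}]^*[c_{ij}]$ gives $\sum_{i,j}\xi_i\xi_j s_{ij}\geq 0$, hence $\langle[\tilde\varphi(s_{ij})]\xi,\xi\rangle=\tilde\varphi(\sum_{i,j}\xi_i\xi_j s_{ij})\geq 0$. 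In the complex case this nonnegative quadratic form would finish the argument, but over $\Rdb$ it does \emph{not} imply positivity of the matrix; the saving grace is that selfadjointness of $\tilde\varphi$ makes $[\tilde\varphi(s_{ij})]$ a real \emph{symmetric} matrix, and for such a matrix a nonnegative quadratic form does force positive semidefiniteness. Thus $\varphi_n(w)+\varphi_n(w)^*\geq 0$ and $\varphi$ is RCP. This symmetric-matrix point is the main obstacle, and is exactly why hypothesis (ii), rather than mere real positivity into a general $B(H)$, is the right one.

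Once $\varphi$ is RCP, Theorem \ref{Sti} applied with $H=\Rdb$, together with Proposition \ref{rcps}, yields $\|\varphi\|=\|\varphi\|_{\rm cb}$, equal to $\varphi(1)$ in the unital case and to $\lim_t\varphi(e_t)=\sup_t\varphi(e_t)$ in the approximately unital case. Normalizing $\psi=\varphi/\|\varphi\|$ (when $\varphi\neq 0$) then gives a norm-one functional which is unital in the unital case, and satisfies $\psi(e_t)\to 1$ in the Jordan case; by the characterization of states at the end of Section \ref{ai} (respectively, since a unital contraction is a state), $\psi$ is a state, so $\varphi$ is a nonnegative multiple of a state, proving (iv). For (iv)$\Rightarrow$(i) it suffices to check a state $\psi$ is real positive: for $x\in{\mathfrak F}_X$ the estimate $|1-\psi(x)|=|\psi(1-x)|\leq\|1-x\|\leq 1$ forces the real number $\psi(x)\geq 0$, and since ${\mathfrak r}_X=\overline{\Rdb^+{\mathfrak F}_X}$ by Lemma \ref{sprf}, $\psi\geq 0$ on all of ${\mathfrak r}_X$; a nonnegative multiple inherits this.

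Finally, the closing clauses are immediate. If $\varphi$ is unital then any of (i)--(iv) gives $\|\varphi\|=\varphi(1)=1$, so $\varphi$ is contractive; conversely a unital contraction is a state, hence real positive by the previous paragraph. If $X$ is a real operator system then, by Theorem \ref{27}, being systematically real positive coincides with being positive and selfadjoint, so the asserted equivalence of contractivity with positivity-and-selfadjointness for unital $\varphi$ is just the unital specialization of the main equivalences; complete positivity of such a $\varphi$ is again the symmetric-matrix argument above (equivalently, RCP together with selfadjointness).
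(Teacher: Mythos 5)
Your proof is correct and follows essentially the same route as the paper's: kill the antisymmetric part of $\Delta(X)$ to force selfadjointness there, extend to a positive selfadjoint functional on $X+X^*$, and upgrade to complete positivity via the observation that a real \emph{symmetric} matrix with nonnegative quadratic form on real vectors is positive semidefinite, which is exactly the paper's argument. The only cosmetic divergences are that the paper checks that states are real positive by extending to a unital contractive functional on a real $C^*$-algebra and citing \cite{Li}, where you argue directly from $\|1-x\|\leq 1$ together with Lemma \ref{sprf}, and in the nonunital Jordan case the paper first establishes boundedness via the argument of \cite[Corollary 2.8]{BRord} before passing to $\varphi^{**}$, where you go straight through the $A+A^*$ extension and Proposition \ref{rcps}.
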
 

\begin{proof}    Suppose that $\varphi$ is a real positive functional on  a unital real operator space $X$.   Its restriction to 
$\Delta(X) = X \cap X^*$ is  a 
real positive functional on an operator system.   If $x^* = -x$ in $\Delta(X)$ 
then as in the proof of Theorem  \ref{27} we have that $\varphi(x) = 0$.  
So    $\varphi$ is  selfadjoint on $\Delta(X)$ by Lemma \ref{Tsyjo}.   
Hence  $\varphi$ is systematically real positive by Theorem  \ref{db8}.  
Such maps are  real completely positive, as in the complex case.    One way to see this
is if $\tilde{\varphi}$ is the canonical extension to $X + X^*$,
and if $x = [x_{ij}] \geq 0$ in $M_n(X + X^*)$ then $[\tilde{\varphi}(x_{ij})]$ is a selfadjoint matrix.
We have $$\langle [\tilde{\varphi}(x_{ij})] \xi , \xi \rangle 
= \tilde{\varphi}(\xi^T x \xi) \geq 0 .$$  So $[\tilde{\varphi}(x_{ij})] \geq 0$.
So $\tilde{\varphi}$ is completely positive and hence $\varphi$ is real completely positive.  
Since $\tilde{\varphi}$ is completely positive  it extends to a 
 completely positive, hence completely bounded, map on the complexification (see e.g.\ Proposition \ref{rcps}).  
 Indeed the norm and  completely bounded norm of this extension is 
 $\varphi(1)$ by e.g.\ \cite[Proposition 3.6]{Pau}, hence $\| \varphi \|  = \| \varphi \|_{\rm cb} = \varphi(1)$.

If $A$ is an  approximately unital real Jordan operator algebra then 
$A = {\mathfrak r}_A - {\mathfrak r}_A$ by Theorem \ref{brord}.   
If   $\varphi$ is a real positive functional then the argument of \cite[Corollary 2.8]{BRord} shows that $\varphi$ is bounded.
Hence $\varphi^{**}$ is real positive, and by the above $\varphi^{**}$ is systematically real positive and 
real completely  positive (RCP), 
with  $\| \varphi^{**} \|  = \| \varphi^{**} \|_{\rm cb} = \varphi^{**}(1)$. 
Hence $\| \varphi \|  = \| \varphi \|_{\rm cb} = \lim_t \, \varphi(e_t)$, the 
latter since $e_t \to 1$ weak* for a cai $(e_t)$.  

A contractive unital functional on  a unital real operator space $X$ extends to a
contractive unital functional on  a 
real $C^*$-algebra.   This is positive and selfadjoint by \cite[Proposition 5.2.6  (3)]{Li}, 
and so its restriction is systematically real positive, and indeed positive 
if $X$ is an operator system.   

In particular any state on  a unital real operator space is real positive.
Similarly on an approximately unital real Jordan operator algebra (e.g.\ by taking the bidual and using the same
argument).  Conversely, if $\varphi$ is real positive and nontrivial then $\frac{1}{\alpha} \, \varphi$ is a state, where
$\alpha$ is $\varphi(1)$, or $\lim_t \, \varphi(e_t)$ in the case of 
a cai $(e_t)$.  

A selfadjoint functional on an operator system is clearly real positive if and only if it is positive,
and is a positive multiple of a state.  So if it is unital it is a state and has norm $1$.  
Such maps are  real completely positive and completely positive,  as in the complex case--see e.g.\
the argument a few paragraphs above.    
\end{proof}

\begin{remark}  If $A$ is a real $JC^*$-algebra  then a positive and selfadjoint
 functional  is real positive, systematically real positive, and completely positive, and 
is RCP.     \end{remark}


A unital linear contraction on a unital operator space  is real positive 
as was stated above.   However the converse is false in general, see Example \ref{f}.   
Nonetheless, we have:

 \begin{lemma} \label{sfunfun}    For a  real linear map  $T : X \to B$ from a unital real operator space or approximately unital real Jordan operator algebra 
 into a 
  commutative real $C^*$-algebra the following are equivalent:
\begin{itemize} \item [(i)] $T$ is real positive.
\item [(ii)]  $T$ is systematically real positive.  
 \item [(iii)]  $T$ is real completely  positive. 
 \end{itemize} 
Such maps are bounded with $\| T \|  = \| T \|_{\rm cb}$.
This equals $\| T(1) \|$, or $\lim_t \, \| T(e_t) \|$ in the case that $A$ has 
a J-cai $(e_t)$.  
If $T$ is unital  then the above hold iff $T$ is contractive.   If $X$ is a real operator system and $T$ is unital 
 then $T$  is contractive if and only if it is  positive and selfadjoint;
such a map is completely positive.  
  \end{lemma}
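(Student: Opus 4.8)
The plan is to reduce the vector-valued statement to the scalar case, Lemma \ref{sfun}, by composing $T$ with the point evaluations of the commutative target, exactly mirroring the structure of that proof. The easy implications are immediate: (iii) $\Rightarrow$ (i) by restricting to the $n=1$ level, and (ii) $\Rightarrow$ (i) directly from the definition of systematically real positive. So the content is (i) $\Rightarrow$ (iii) and (i) $\Rightarrow$ (ii), together with the norm identities. For the reduction I would work in the Gelfand picture of the complexification, writing $B_c = C_0(\Omega)$ for a locally compact $\Omega$, with characters $\delta_\omega : B_c \to \Cdb$. For each $\omega$ the functional $\rho_\omega = ({\rm Re}\,\delta_\omega)|_B$ is real positive on $B$, since $\rho_\omega(b) = \frac{1}{2}\delta_\omega(b+b^*) \geq 0$ for $b \in {\mathfrak r}_B$; hence $\rho_\omega \circ T$ is a real positive functional on $X$. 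By Lemma \ref{sfun} each $\rho_\omega \circ T$ is systematically real positive and RCP, is bounded, and has norm equal to its value at $1$ (resp.\ $\lim_t \, \rho_\omega(T(e_t))$ in the nonunital case).

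Next I would assemble these scalar facts. The norm on $M_n(B) \subseteq M_n(B_c) = C_0(\Omega, M_n(\Cdb))$ is computed pointwise over $\Omega$, which should give boundedness of $T$ and the identities $\| T \| = \| T \|_{\rm cb} = \| T(1) \|$ (resp.\ $= \lim_t \, \| T(e_t) \|$) once the matrix estimates are in hand. For complete real positivity, the key observation is that positivity in $C_0(\Omega, M_n(\Cdb))$ is tested at each point of $\Omega$, and that $(\delta_\omega)_n \circ T_n = (\delta_\omega \circ T)_n$; thus $T_n$ being real positive reduces to $\delta_\omega \circ T$ being real completely positive for every $\omega$. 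I would establish this, and the selfadjointness of the canonical extension $\tilde{T}$ on $X + X^*$ required for (ii), by passing to the complexification: show $T_c : X_c \to B_c$ is real positive and then invoke the complex case of the present result (e.g.\ \cite[Lemma 2.1]{BNjp}) for maps into the commutative $C^*$-algebra $B_c$, which yields that $T_c$ is RCP and systematically real positive; restricting to $X$ then gives (iii) and (ii). Here Proposition \ref{rcps} and Lemma \ref{lemos} are used to transfer the complete positivity of the selfadjoint extension back down to $X + X^*$.

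The approximately unital Jordan operator algebra case is then handled as in Lemma \ref{sfun}: one first uses $A = {\mathfrak r}_A - {\mathfrak r}_A$ (Theorem \ref{brord}) together with the argument of \cite[Corollary 2.8]{BRord} to deduce that a real positive $T$ is automatically bounded, then applies the unital conclusion to $T^{**}$ and reads off $\| T \| = \| T \|_{\rm cb} = \lim_t \, \| T(e_t) \|$ from $\| T^{**}(1) \|$, since $e_t \to 1$ weak*. Finally, the last two assertions --- that a unital such $T$ satisfies (i)--(iii) iff it is contractive, and that on a real operator system a unital contraction is exactly a positive selfadjoint (hence completely positive) map --- follow essentially verbatim from the corresponding lines of Lemma \ref{sfun}, extending a unital contraction to a unital contraction on a real $C^*$-algebra via \cite[Proposition 5.2.6 (3)]{Li}.

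I expect the genuine obstacle to be precisely the step asserting that $T_c$ is real positive (equivalently, that the pointwise data $\delta_\omega \circ T$ are truly real completely positive, not merely real positive on the symmetric part). This is delicate in the real setting for exactly the reason stressed in the introduction: real states and numerical ranges do not detect positivity or selfadjointness, so a naive "test at each character of $B$" controls only the symmetric part of the output and loses the information in the antisymmetric part. The remedy is to perform every positivity test inside the complexification $B_c$, where states do detect positivity, and to lean on the already-established complex theory rather than on real-scalar character arguments; organizing the proof so that the real-scalar reductions are invoked only through $B_c$ is the crux of making the argument go through.
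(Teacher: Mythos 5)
Your overall strategy --- reducing to the scalar Lemma \ref{sfun} by composing with point evaluations on the commutative target, and handling the approximately unital case through a weak* continuous extension to $A^{**}$ --- is the paper's strategy, and your treatment of the easy implications, of the final assertions about unital contractions, and of the operator system case matches the paper. The difference, and the gap, is in how the pointwise reduction is performed. You compose $T$ with $\rho_\omega = ({\rm Re}\,\delta_\omega)|_B$. These are indeed real positive $\Rdb$-valued functionals covered by Lemma \ref{sfun}, but they see only ${\rm Re}\bigl(T(f)(\omega)\bigr)$, that is, the selfadjoint part of $T(f)$: neither $\|T(f)\|$ nor positivity of $(\delta_\omega)_m\bigl(T_m(x)+T_m(x)^*\bigr)$ in $M_m(\Cdb)$ is determined by the entrywise real parts alone (for a selfadjoint $x+iy \in M_m(\Cdb)$, positivity is strictly stronger than positivity of $x$). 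So ``once the matrix estimates are in hand'' is precisely where the proof is missing.

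You correctly flag this as the crux, but the remedy you propose --- prove $T_c$ is real positive and quote the complex theory --- cannot be carried out from hypothesis (i). Real positivity of $T$ gives no control over $T_c(ix)=iT(x)$; the paper's mechanism for passing real positivity to $T_c$ (Proposition \ref{rcps}) assumes $T$ is real \emph{completely} positive, which is the conclusion (iii) you are trying to reach, so the route is circular. The obstruction is not merely technical: the real-linear map $\psi(a+ib)=a+2ib$ on $B=X=\Cdb$ (a commutative real $C^*$-algebra and unital real operator space) is unital, selfadjoint, positive, hence real positive and systematically real positive, yet $\|\psi\|=2\neq 1=\|\psi(1)\|$, and $\psi$ is not real $2$-positive (test $z=\left[\begin{smallmatrix} 1 & 2i\\ 0 & 1\end{smallmatrix}\right]$, for which $z+z^*\geq 0$ but $\psi_2(z)+\psi_2(z)^*$ has determinant $-12$); consequently $\psi_c$ is not real positive. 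Any correct argument must therefore control the antisymmetric part of $T(f)(\omega)$ by some means other than real positivity of the $\Cdb$-valued evaluations. Be aware that the paper's own one-line proof applies Lemma \ref{sfun} directly to the full $\Cdb$-valued evaluations $\psi_w=T(\cdot)(w)$ after embedding $B$ in $C(K,\Cdb)$; that lemma as stated covers only $\Rdb$-valued functionals, and the example above shows that its naive $\Cdb$-valued analogue fails, so the difficulty you isolated is genuine there too. Both arguments become unproblematic exactly when every character of $B$ is real-valued, i.e.\ when $B$ embeds in $C_0(K,\Rdb)$.
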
 

\begin{proof}    This follows from Lemma \ref{sfun}  and the usual trick for this 
in the complex case.  Indeed the commutative real $C^*$-algebra
may be replaced by $C(K,\Cdb)$ viewed as a real $*$-algebra by basic facts about  commutative real $C^*$-algebras
 (see \cite[Theorem 1.9]{Ros} or \cite{Li}).
Then apply  Lemma \ref{sfun} to the linear functional $\psi_w = T(\cdot)(w)$ for fixed  $w \in K$.   This will be real positive 
if $T$ is real positive, and so $\|  \psi_w \| = \psi_w(1)$ by Lemma \ref{sfun}.  
Thus $\| T \|$ equals
$$\sup \{ |T(f)(w) | : w \in K, f \in {\rm Ball}(X) \} = \sup \{ |T(1)(w) | : w \in K, f \in {\rm Ball}(X) \}$$
which is  $\| T(1) \|.$  In particular $T$ is bounded.  
 For an approximately unital real Jordan operator algebra $A$ we can take a weak* continuous extension on $A^{**}$ to 
 reduce to the unital case as in e.g.\  Proposition \ref{rcps}.  
 
  If $T$ is a unital  contraction then $T$ is real positive as we said above Example \ref{expoly}.   If further $X$ is a real operator system then 
  $T$ is positive and selfadjoint by (ii).   It is also completely contractive since $\| T \|  = \| T \|_{\rm cb}$, so applying the above to each 
  $T_n$ we see that it is completely positive.  
 \end{proof}

\begin{lemma} \label{inMn} 
Let $T : A \to M_n$ be a  linear map on a unital operator space, or a bounded linear map on an approximately unital real Jordan operator algebra,
which is real $n$-positive (that is, $T_n$ is real positive).
Then  $T$ is RCP and systematically real positive,
and $\| T \|_{\rm cb} = \| T \|$.    The latter equals $\| T (1) \|$ if $A$ is unital, otherwise equals
$\| T^{**}(1) \| = \lim_t \, \| T(e_t) \|$, if $(e_t)$ is a J-cai for $A$.    \end{lemma}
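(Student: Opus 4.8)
The plan is to prove the only substantial new assertion, namely that $T$ is RCP; every other conclusion is then inherited from Proposition \ref{rcps}. Indeed, once $T$ is known to be real completely positive, Proposition \ref{rcps} shows directly that its canonical extension $\tilde{T}$ to $A + A^*$ is selfadjoint and completely positive, hence selfadjoint and positive, which is precisely the assertion that $T$ is systematically real positive. That same proposition also delivers $\|T\| = \|T\|_{\rm cb} = \|T(1)\|$ in the unital case, and $\|T\| = \|T\|_{\rm cb} = \|T^{**}(1)\| = \lim_t \|T(e_t)\|$ in the approximately unital case. So I would reduce the whole lemma to the single claim: real $n$-positivity of a map into $M_n$ forces real positivity at every matrix level.

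For that claim I would run the real analogue of Smith's compression argument. Fix $k > n$ and $w = [w_{ij}] \in M_k(A) \subseteq M_k(B(H))$ with $w + w^* \geq 0$, and set $S = T_k(w) + T_k(w)^* \in M_{kn}(\Rdb)$, a real \emph{symmetric} matrix. For any real scalar matrix $R \in M_{n,k}(\Rdb)$ the compression $RwR^*$ lies in $M_n(A)$ (its entries are real-linear combinations of the $w_{ij}$, so no product in $A$ is formed and the Jordan case causes no difficulty), and $RwR^* + (RwR^*)^* = R(w+w^*)R^* \geq 0$. Hence $RwR^*$ is real positive, and real $n$-positivity of $T$ yields $T_n(RwR^*) + T_n(RwR^*)^* \geq 0$. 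Using the elementary identity $T_n(RwR^*) = (R \otimes I_n)\, T_k(w)\, (R^* \otimes I_n)$, where $R \otimes I_n$ denotes the inflation of $R$ acting on $\Rdb^{kn} = \Rdb^k \otimes \Rdb^n$ and has adjoint $R^* \otimes I_n$, this rearranges to
$$(R \otimes I_n)\, S\, (R^* \otimes I_n) \geq 0.$$

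Now, given an arbitrary real vector $x = (x_1, \dots, x_k) \in (\Rdb^n)^k$, I would expand each $x_i$ in a fixed basis of $\Rdb^n$ and choose the entries of $R$ so that $x = (R^* \otimes I_n)\eta$ for a suitable $\eta \in \Rdb^n \otimes \Rdb^n$; this is an exactly-determined linear solvability statement that works over $\Rdb$. Then $\langle S x, x \rangle = \langle (R \otimes I_n)\, S\, (R^* \otimes I_n)\eta, \eta \rangle \geq 0$. The crucial, and genuinely delicate, step is the final inference: in general $\langle S x, x \rangle \geq 0$ for all real $x$ does \emph{not} force $S \geq 0$, which is exactly the real-case pitfall stressed in the introduction. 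It is legitimate here precisely because $S = T_k(w) + T_k(w)^*$ is manifestly symmetric, and a real symmetric matrix is positive semidefinite as soon as its quadratic form is nonnegative on all real vectors. Thus $S \geq 0$, so $T_k$ is real positive; as $k$ was arbitrary, $T$ is RCP, and the remaining conclusions follow from Proposition \ref{rcps} as above. The main obstacle to flag is this symmetry observation: it is what allows the compression argument to bypass complexification, which (as Example \ref{wl21} and Proposition \ref{wtex} show) can genuinely fail to preserve contractivity and positivity, so it must be invoked explicitly rather than glossed over.
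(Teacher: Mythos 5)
Your proof is correct and follows essentially the same route as the paper's: reduce everything to showing $T$ is RCP via Proposition \ref{rcps}, then run the Smith compression argument (Proposition 2.2.2 of \cite{ER}) at level $k$, using that $T_k(w)+T_k(w)^*$ is symmetric so that nonnegativity of the real quadratic form suffices. The symmetry observation you flag is exactly the point the paper's proof also makes explicit, so there is nothing to add.
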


\begin{proof}    Suppose that $x = [x_{ij}]$ and $x + x^*  \geq 0$ in $M_m(A + A^*)$.
Then $[T(x_{ij})] + [T(x_{ji})^*] = T_m(x) + T_m(x)^*$ is certainly selfadjoint.   So to test if this is positive it is enough to check that
$\langle (T_m(x) + T_m(x)^*) \eta , \eta \rangle \geq 0$ for $\eta \in (\Rdb^n)^m$ and $m > n$.   
However in a real Hilbert space $\langle (z+z^*)   \eta , \eta \rangle = 2 \langle z  \eta , \eta \rangle$.
Hence  it is enough to check that $\langle T_m(x)  \eta , \eta \rangle \geq 0$.
As in  the proof of Proposition 2.2.2 in 
 \cite{ER} there is an isometry $\alpha : \Rdb^n \to \Rdb^m$ and $\xi \in (\Rdb^n)^n$ such that 
 $\eta = (\alpha \otimes I_n) (\xi)$.   
 Set  $y = \alpha^* x \alpha$ then $y \in M_n(A)$ and 
 $$2 \langle T_n(y)   \xi , \xi \rangle =  \langle (T_n(y) +   T_n(y)^*) \xi , \xi \rangle \geq 0$$
 since $T_n$ is real positive.   Hence as in Proposition 2.2.2 in \cite{ER}, 
 $$ \langle T_m(x) \eta , \eta \rangle = \langle T_n( \alpha^* [x_{ij}] \alpha) \xi , \xi \rangle =  \langle T_n(y)   \xi , \xi \rangle \geq 0.$$
Thus $T$ is RCP.    
The rest follows from  Proposition \ref{rcps}.   \end{proof} 

 \begin{remark} There is a {\em  Schwarz inequality} for  2-positive real linear maps on real $C^*$-algebras, proved identically to 
e.g.\ \cite[Proposition 3.3]{Pau}.  See p.\ 492 in \cite{ROnr}  for the Schwarz inequality for real UCP maps.
\end{remark}

 
 The map ${\rm Re} : B(H) \to B(H)_{\rm sa}$  is real positive, positive, unital, contractive 
and selfadjoint.      
  
We say that a map $u : X \to Y$  is  {\em real contractive}  if $\| {\rm Re} \, u(x) \| \leq \| {\rm Re} \, x \|$ for $x \in X$.
We say that $u$  is  {\em real bounded} if there is a constant $c \geq 0$ with $\| {\rm Re} \, u(x) \| \leq  c \, \| {\rm Re} \, x \|$ for $x \in X$, 
and then we write $\| u \|_r$ for the  least $c$ in this inequality.  This is called the 
{\em real bounded norm} (actually it is a seminorm).

\begin{lemma} \label{rsu}     Let  $A$ be a real unital operator space.
  If $u : A \to B(H)$ is real positive and restricts to  selfadjoint map on $\Delta(A)$ then 
 $u$  is 
 real bounded with $\| u \|_r  = \| u(1) \|$.  
 Indeed  $u$ extends   to a positive selfadjoint  map 
$\tilde{u} : {\mathcal S} = A + A^* \to B(H)$,  with  $\|  \tilde{u}  \|_r = \| u(1) \|$.   

If $A$ is an approximately unital real Jordan operator algebra
and $u : A \to B(H)$ is real positive and restricts to  selfadjoint map on $\Delta(A)$ then 
 $u$  is 
 real bounded with $\| u \|_r  = \sup_t \,  \| u(e_t) \|$, where 
 $(e_t)$ is any Jordan cai for $A$.   \end{lemma}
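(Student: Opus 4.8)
The plan is to push everything onto the canonical selfadjoint extension and then onto order theory. Since $u$ is real positive and selfadjoint on $\Delta(A)$ it is systematically real positive, so (by Theorem~\ref{db8} in the unital operator space case, and by the remarks following it in the Jordan case) the map $\tilde u : A + A^* \to B(H)$, $x + y^* \mapsto u(x) + u(y)^*$, is well defined, selfadjoint and positive. As $\tilde u$ is selfadjoint and extends $u$, for $a \in A$ we get ${\rm Re}\, u(a) = \tfrac12(u(a)+u(a)^*) = \tilde u({\rm Re}\, a)$ with ${\rm Re}\, a \in (A+A^*)_{\rm sa}$; and every selfadjoint $h = a + b^* \in A+A^*$ equals ${\rm Re}(a+b)$ with $a+b \in A$ (the identity $h = h^*$ forces $a - a^* = b - b^*$, which is exactly what the computation needs). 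Hence ${\rm Re}$ maps $A$ onto $(A+A^*)_{\rm sa}$, so that
\[
\| u \|_r \;=\; \sup\{\, \| \tilde u(h) \| : h = h^* \in A + A^*,\ \| h \| \le 1 \,\},
\]
the operator norm of $\tilde u$ restricted to the selfadjoint part (and the same reduction computes $\|\tilde u\|_r$).

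In the unital case this finishes matters quickly. For $h = h^*$ with $\|h\| \le 1$ we have $-1 \le h \le 1$, so positivity of $\tilde u$ gives $-u(1) \le \tilde u(h) \le u(1)$; since $\tilde u(h)$ is selfadjoint and $u(1) = \tilde u(1) \ge 0$, this yields $\|\tilde u(h)\| \le \|u(1)\|$, with equality at $h = 1$. Thus $\|u\|_r = \|\tilde u\|_r = \|u(1)\|$, and $\tilde u$ is the asserted positive selfadjoint extension.

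For an approximately unital Jordan operator algebra the difficulty is that $A+A^*$ has no unit and, worse, that $u$ itself may be unbounded (Example~\ref{f}), so one cannot simply pass to a bidual of $u$. I would instead slice by vectors. For a unit vector $\xi \in H$ set $\psi_\xi = \langle u(\cdot)\xi, \xi\rangle$; symmetry of the real inner product gives $\langle u(a)\xi,\xi\rangle = \langle {\rm Re}\, u(a)\,\xi,\xi\rangle$, and real positivity of $u$ makes $\psi_\xi$ a real positive functional on $A$. By Lemma~\ref{sfun} each $\psi_\xi$ is bounded with $\|\psi_\xi\| = \lim_t \psi_\xi(e_t)$, and its canonical extension to $A+A^*$ is the positive selfadjoint functional $\langle \tilde u(\cdot)\xi,\xi\rangle$, whose norm on $(A+A^*)_{\rm sa}$ is again $\|\psi_\xi\|$. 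Using $\|\tilde u(h)\| = \sup_{\|\xi\|\le1}|\langle \tilde u(h)\xi,\xi\rangle|$ for selfadjoint $\tilde u(h)$, the display above becomes
\[
\| u \|_r \;=\; \sup_{\|\xi\| \le 1} \|\psi_\xi\| \;=\; \sup_{\|\xi\|\le1}\, \lim_t \langle {\rm Re}\, u(e_t)\,\xi,\xi\rangle .
\]
Real boundedness (finiteness of this supremum) then follows from the uniform boundedness principle, since the bounded functionals $\{\langle \tilde u(\cdot)\xi,\xi\rangle\}_{\|\xi\|\le1}$ are pointwise bounded on $A+A^*$ (each $\tilde u(h)$ being a genuine bounded operator). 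Finally, since ${\rm Re}\, u(e_t) = \tilde u({\rm Re}\, e_t)$ with $\|{\rm Re}\, e_t\| \le 1$ we have $\|{\rm Re}\, u(e_t)\| \le \|u\|_r$, while the display forces $\sup_t \|{\rm Re}\, u(e_t)\| \ge \|u\|_r$; the passage from $\sup_t$ to $\lim_t$ is then the subnet argument of Proposition~\ref{rcps}.

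The step I expect to be the crux is this last identification. Because $u$ need not be bounded, the full operators $u(e_t)$ can carry arbitrarily large antisymmetric parts along a J-cai, and it is only their real parts ${\rm Re}\, u(e_t) = \tilde u({\rm Re}\, e_t)$ that $\|u\|_r$ manifestly controls; the reduction to the unital case must therefore be run through the positive selfadjoint slice functionals $\langle \tilde u(\cdot)\xi,\xi\rangle$, to which Lemma~\ref{sfun} applies, rather than through $u$ directly. The quantity the argument produces on the nose is thus $\sup_t \|{\rm Re}\, u(e_t)\|$, and matching it to $\sup_t\|u(e_t)\|$ is exactly the delicate point: it is immediate once $u(e_t)$ is asymptotically selfadjoint (for instance when the J-cai is drawn from a positive cai of the real $JC^*$-algebra $\Delta(A)$, so that $u(e_t)$ is selfadjoint and $\|u(e_t)\| = \|{\rm Re}\, u(e_t)\|$), and I would expect the proof to require this kind of care with the antisymmetric part.
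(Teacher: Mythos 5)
Your proof is correct and follows essentially the same route as the paper's: the positive selfadjoint extension $\tilde u$ from Theorem \ref{db8}, the reduction of $\| u \|_r$ to the selfadjoint part of $A+A^*$ via the surjectivity of ${\rm Re} : A \to (A+A^*)_{\rm sa}$, and the slice functionals $\psi_\xi = \langle u(\cdot)\xi,\xi\rangle$ controlled by Lemma \ref{sfun} (the paper runs the slices in the unital case too, where your order-theoretic shortcut $-u(1) \leq \tilde u(h) \leq u(1)$ is a harmless variant). The ``delicate point'' you flag at the end is well taken: the paper's own proof likewise only establishes $\| u \|_r \leq \sup_{t,\xi} |\langle u(e_t)\xi,\xi\rangle| = \sup_t \| {\rm Re}\, u(e_t) \| \leq \sup_t \| u(e_t) \|$ and is silent on the reverse inequality $\sup_t \| u(e_t) \| \leq \| u \|_r$, which, exactly as you observe, is problematic when $u$ is unbounded on $A_{\rm as}$ and the J-cai is not asymptotically selfadjoint.
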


\begin{proof}     First assume $A$ is unital. 
The restriction of $u$ to $\Delta(A)$ is real positive, so by Theorem \ref{db8} 
we have that $u$ is systematically real positive, $u(1) \geq 0$,
and $u$ extends to a positive selfadjoint 
$\tilde{u} : {\mathcal S} = A + A^* \to B(H)$.
For any unit vector $\xi \in H$, by Lemma \ref{sfun}  
we have that $\varphi_\xi = \langle u (x) \xi , \xi \rangle$ is systematically real positive and bounded with norm
$\langle u(1)  \xi , \xi \rangle$.  By the proof of Lemma \ref{sfun}, $\varphi_\xi$ extends to 
a positive selfadjoint functional $\psi_\xi$ on ${\mathcal S}$ of norm $\langle u(1)  \xi , \xi \rangle$.
 
For $x \in {\rm Ball}({\mathcal S})$ we have $\| {\rm Re} \, x \| \leq \| x \| \leq 1$. 
We have ${\rm Re}  \, \tilde{u} (x) =  \tilde{u} ({\rm Re} \, x)$ and 
$\| {\rm Re}  \, \tilde{u} (x) \|$ equals $$\sup \,  | \langle \tilde{u} ({\rm Re} \, x) \xi , \xi \rangle |  
= \sup \, | \psi_\xi ({\rm Re} \, x) |$$
(suprema over $\xi \in H , \| \xi \| = 1$).  This is dominated by $\sup \, \langle u(1)  \xi , \xi \rangle = \| u(1) \|$.

Next, if $A$ is an approximately unital real Jordan operator algebra then the same argument gives that 
$\varphi_\xi$ is systematically real positive and bounded with norm
$\sup_t \, |\langle u(e_t)  \xi , \xi \rangle|$.     Then 
$$\| {\rm Re}  \, \tilde{u} (x) \| \leq \sup \,  | \langle u(e_t) \xi , \xi \rangle | \leq
\sup_t \,  \| u(e_t) \| ,$$
where the second last supremum is over $t$ and $\xi \in H , \| \xi \| = 1$.
 \end{proof}

\begin{corollary} Let  $A$ be a unital real $JC^*$-algebra.
If $u : A \to B(H)$ is selfadjoint and positive then  
$u$  is real bounded with $\| u \|_r = \| ({\rm Re} \; u)^{**}(1) \| = \lim_t \, \| u(e_t) \|$.  
Here $(e_t)$ is a (increasing, if one wishes) cai for $A$.  \end{corollary}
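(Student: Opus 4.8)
The plan is to deduce the corollary directly from the unital case of Lemma \ref{rsu}, after checking that its hypotheses are met. Since $A$ is a real $JC^*$-algebra it is a selfadjoint subspace, so $\Delta(A) = A \cap A^* = A$, and the requirement in Lemma \ref{rsu} that $u$ restrict to a selfadjoint map on $\Delta(A)$ is exactly the given selfadjointness of $u$. First I would verify that $u$ is real positive: if $x \in {\mathfrak r}_A$ then $x + x^*$ is selfadjoint, positive, and lies in the selfadjoint algebra $A$, hence belongs to $A_+$, so positivity of $u$ gives $u(x+x^*) \geq 0$; since $u$ is selfadjoint, $u(x) + u(x)^* = u(x+x^*) \geq 0$, so $u(x)$ is real positive. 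With real positivity and selfadjointness on $\Delta(A) = A$ in hand, Lemma \ref{rsu} (unital case) applies and yields that $u$ is real bounded with $\| u \|_r = \| u(1) \|$.

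Next I would match $\| u(1) \|$ with the other two expressions. Because $u$ is selfadjoint and $1 = 1^*$, the operator $u(1)$ is selfadjoint, so ${\rm Re}\, u(1) = u(1)$. Selfadjointness also gives the identity ${\rm Re}\, u(x) = u({\rm Re}\, x)$, whence, by the definition of $\| u \|_r$, the map ${\rm Re}\, u : A \to B(H)$ satisfies $\| {\rm Re}\, u(x) \| \leq \| u \|_r \, \| {\rm Re}\, x \| \leq \| u \|_r \, \| x \|$. Thus ${\rm Re}\, u$ is a genuinely norm-bounded linear map (with norm $\le \|u\|_r$), its bidual is defined, and since $1 \in A$ we have $({\rm Re}\, u)^{**}(1) = {\rm Re}\, u(1) = u(1)$. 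This gives $\| ({\rm Re}\, u)^{**}(1) \| = \| u(1) \| = \| u \|_r$.

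For the approximate-identity formula I would take $(e_t)$ to be a positive (hence selfadjoint) cai, which may be chosen increasing. In the unital algebra any cai converges in norm to $1$, since $e_t \circ 1 = e_t \to 1$. The point to watch, and the essential obstacle, is that $u$ itself may be unbounded (as Example \ref{f} shows), so $e_t \to 1$ does not directly control $u(e_t)$; one must route everything through the bounded map ${\rm Re}\, u$. This is where the selfadjointness of the $e_t$ is used: each $u(e_t)$ is then selfadjoint, so $u(e_t) = {\rm Re}\, u(e_t)$, and since ${\rm Re}\, u$ is bounded we get ${\rm Re}\, u(e_t) \to {\rm Re}\, u(1) = u(1)$ in norm, hence $\| u(e_t) \| \to \| u(1) \| = \| u \|_r$. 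Assembling the three equalities completes the argument; the only real subtlety throughout is that $u$ is controlled solely on its selfadjoint part via ${\rm Re}\, u$, never directly.
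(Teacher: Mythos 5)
Your proof is correct and follows essentially the same route as the paper, whose entire argument is the observation that a positive selfadjoint map is real positive so that Lemma \ref{rsu} applies; you additionally supply the verifications of the remaining two equalities (via boundedness of ${\rm Re}\, u$ and norm convergence $e_t \to 1$ in the unital algebra), which the paper leaves implicit. Your insistence on a selfadjoint (e.g.\ positive increasing) cai in the last step is the right reading of the statement: for a cai with nonzero antisymmetric parts the formula $\lim_t \| u(e_t) \| = \| u(1) \|$ can fail when $u$ is unbounded on $A_{\rm as}$, as in Example \ref{f}.
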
 

\begin{proof}  Since $u$ is real positive  we may appeal to Lemma \ref{rsu}.     \end{proof}

\begin{corollary} \label{trivj} Let $A, B$ be  approximately unital real Jordan operator algebras,
and let  $T : A \to B$ be a contraction which is approximately unital (that is, takes some Jordan cai to a Jordan cai), or more generally for which $T^{**}$ is unital.    Then $T$ is real positive.    If in addition  
$T$ is selfadjoint on $\Delta(A)$ then $T$ is systematically real positive.

 If $\theta : A \to B$ is a contractive Jordan homomorphism then $\theta$ is systematically real positive.
  \end{corollary}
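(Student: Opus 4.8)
The plan is to reduce all three assertions to the fact recorded above Example~\ref{expoly}, namely that a unital contraction on a unital operator space is real positive (itself a consequence of Lemma~\ref{sprf}).

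\emph{First assertion.} I would begin by observing that the two stated hypotheses coincide after passing to biduals. If $(e_t)$ is a Jordan cai for $A$ with $(T(e_t))$ a Jordan cai for $B$, then by Lemma~\ref{jcai} we have $e_t \to p$ and $T(e_t) \to q$ weak*, where $p,q$ are the identities of $A^{**}$ and $B^{**}$; since $T^{**}$ is weak* continuous and extends $T$, this forces $T^{**}(p)=q$, that is, $T^{**}$ is unital. Thus it suffices to treat the case that $T^{**}$ is unital. Now $A^{**}$ and $B^{**}$ are unital Jordan operator algebras (Lemma~\ref{jcai}(iv)), so $T^{**}:A^{**}\to B^{**}$ is a unital contraction and hence is real positive by the fact above Example~\ref{expoly}. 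Finally, since $\mathfrak{r}_A \subseteq \mathfrak{r}_{A^{**}}$ (Theorem~\ref{frden}) and $T^{**}|_A = T$, every $x\in\mathfrak{r}_A$ has $T(x)=T^{**}(x)$ real positive in $B^{**}$; as real positivity is intrinsic, $T(x)$ is real positive in $B$, so $T$ is real positive.

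\emph{Second assertion.} This is then immediate: by definition a map that is real positive and selfadjoint on $\Delta(A)$ is systematically real positive, so the extra hypothesis finishes this case.

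\emph{Third assertion.} The subtlety here, and the main obstacle, is that $\theta^{**}$ need not be unital (for instance $\theta$ could be degenerate, even zero), so the first assertion does not apply directly to $\theta$ itself; I would circumvent this by passing to the nondegenerate part. Representing $B\subseteq B(H)$, let $P$ be the projection onto $K=[\theta(A)H]$ and let $\psi = P\theta(\cdot)P \in B(K)$ be the compression. By Proposition~\ref{coj}, $\psi$ is a contractive Jordan homomorphism, $\theta(a)=P\theta(a)P$ for all $a$, and, taking a partial cai $(e_t)$ for $A$, we have $\psi(e_t)\to I_K$ strongly; hence $(\psi(e_t))$ is a Jordan cai for $\overline{\psi(A)}$ and $\psi$ is an approximately unital contraction. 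The first assertion then gives that $\psi$ is real positive. For $x\in\mathfrak{r}_A$ this yields $P\theta(x)P + P\theta(x)^*P \geq 0$; since $\theta(x)=P\theta(x)P$ is supported on $K$, the operator $\theta(x)+\theta(x)^*$ equals this positive block together with $0$ on $K^\perp$, so $\theta(x)+\theta(x)^*\geq 0$. Thus $\theta$ is real positive. Finally, by Lemma~\ref{ApA} the restriction of $\theta$ to $\Delta(A)$ is a Jordan $*$-homomorphism, in particular selfadjoint, so $\theta$ is systematically real positive.
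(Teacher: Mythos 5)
Your proof is correct and follows essentially the same route as the paper: pass to second duals (or note the two hypotheses coincide there) to reduce to a unital contraction, invoke the fact before Example \ref{expoly} that unital contractions are real positive, and for the homomorphism case reduce to the nondegenerate part via Proposition \ref{coj} before citing Theorem \ref{contractive-hom}/Lemma \ref{ApA} for selfadjointness on $\Delta(A)$. The paper's proof is just a terser version of exactly these steps (it phrases the third assertion as ``replacing $B$ with $\overline{\theta(A)}$'', which is your compression argument).
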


\begin{proof}    By taking the second dual we may assume that $A, B$ are unital, and that $T(1) = 1$.    
Then the first assertion follows from the lines before Example \ref{expoly}.    The `in addition' statement then 
follows from Theorem  \ref{db8} or the paragraphs after that.
The last assertion follows easily from the first after replacing $B$ with $\overline{\theta(A)}$, and noting that the restriction of $\theta$ to the real JC*-algebra $\Delta(A)$ is 
selfadjoint by Theorem \ref{contractive-hom}.    It also follows easily from  Lemma \ref{ApA}.
\end{proof} 

\begin{remark}
 If $\theta : A \to B$ is a contractive (resp.\ isometric) Jordan homomorphism between unital or approximately unital real Jordan operator algebras
then we are not certain if $\tilde{\theta} : A + A^* \to B + B^*$ is contractive
(resp.\ isometric).  \end{remark}

\begin{theorem} \label{cepro}   Let $A$ and $B$ be approximately unital  real Jordan operator algebras, and write
$A^1$ for a real Jordan operator algebra unitization of $A$ with $A \neq A^1$.  Let $C$  be a unital real Jordan operator
algebra containing $B$ as a closed Jordan subalgebra.
\begin{itemize}  \item [(1)]
A  real positive real contractive  linear map  $T : A \to B$ extends to a unital real
 positive  linear map from $A^1$ to $C$, which is systematically real
 positive and real contractive if $T$ is also selfadjoint on 
$\Delta(A)$.   
 \item [(2)]  A  real completely positive completely contractive  linear map  $T : A \to B$ extends to a unital real
 completely  positive  completely contractive linear map from $A^1$ to $C$. 
\end{itemize}
\end{theorem}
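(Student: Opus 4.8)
The plan is to handle the two parts by quite different routes, since for part~(2) complexification is available, whereas for part~(1) it must be avoided (contractivity need not survive complexification, by Proposition~\ref{wtex}).

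For part~(2), I would complexify. By Proposition~\ref{rcps} the map $T_c : A_c \to B_c$ is real completely positive, and by \cite[Theorem 2.1]{RComp} we have $\|T_c\|_{\rm cb} = \|T\|_{\rm cb} \leq 1$, so $T_c$ is completely contractive. Since $A_c$ is approximately unital (Lemma~\ref{JOA_aprox_unital}) and $B_c \subseteq C_c$ with $C_c$ a unital complex Jordan operator algebra, the complex case of the present theorem (see \cite[Section 2]{BNjp}) extends $T_c$ to a unital, real completely positive, completely contractive map $\Psi : (A_c)^1 \to C_c$. Using $(A^1)_c = (A_c)^1$ (Corollary~\ref{uniqueJoaunitization}) one checks that $\Psi$ is precisely the complexification of the canonical real extension $\Phi(a + \lambda 1) = T(a) + \lambda 1_C$ (for $a \in A$, $\lambda \in \Rdb$), since both send $a+\lambda 1$ to $T(a) + \lambda 1_C$. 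Restricting $\Psi$ to $A^1 \subseteq (A^1)_c$ shows that $\Phi$ is unital into $C$, is real completely positive (because $\mathfrak{r}_{M_n(A^1)} \subseteq \mathfrak{r}_{M_n((A^1)_c)}$ and $\Phi(x) = \Psi(x) \in \mathfrak{r}_{M_n(C_c)} \cap M_n(C) = \mathfrak{r}_{M_n(C)}$), and is completely contractive (as $\|\Phi\|_{\rm cb} \leq \|\Psi\|_{\rm cb} \leq 1$ via the complete isometries $A^1 \hookrightarrow (A^1)_c$ and $C \hookrightarrow C_c$). This settles part~(2).

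For part~(1) I would argue directly in the bidual. Define $\Phi(a + \lambda 1) = T(a) + \lambda 1_C$, which is well defined and unital since $A \neq A^1$. Let $e$ be the identity of $A^{**}$ (Lemma~\ref{jcai}); it is a central projection with $(A^1)^{**} = A^{**} \oplus \Rdb(1-e)$, and $1-e \neq 0$ as $A \neq A^1$. For $x = a+\lambda 1$ we get the orthogonal decomposition $x + x^* = (a + a^* + 2\lambda e) \oplus 2\lambda(1-e)$ across $e$ and $1-e$, so $x \in \mathfrak{r}_{A^1}$ forces $\lambda \geq 0$ and $w := a + \lambda e \in \mathfrak{r}_{A^{**}}$. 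The crucial point, where real contractivity (not mere real positivity) is used, is that although $T$ need not be norm bounded, the map $S := {\rm Re}\circ T : A \to C^*(C)_{\rm sa}$ satisfies $\|S(x)\| \leq \|T\|_r\,\|{\rm Re}\,x\| \leq \|x\|$, hence is a bounded real-linear map with a weak$^*$-continuous bidual extension $S^{**}$. I would then prove the key inequality $0 \leq S^{**}(e) \leq 1_C$: choosing by Theorem~\ref{frden} a cai $(e_t)$ for $A$ in $\tfrac12\mathfrak{F}_A \subseteq \mathfrak{r}_A$, real positivity gives $S(e_t) \geq 0$, while $\|{\rm Re}\,e_t\| \leq 1$ and real contractivity give $\|S(e_t)\| \leq 1$, so $0 \leq S(e_t) \leq 1_C$, and passing to the weak$^*$ limit $e_t \to e$ yields $0 \leq S^{**}(e) \leq 1_C$. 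Moreover, as $\mathfrak{r}_A$ is weak$^*$ dense in $\mathfrak{r}_{A^{**}}$ (Theorem~\ref{frden}) and $S$ carries $\mathfrak{r}_A$ into the positives, $S^{**}(w) \geq 0$ for all $w \in \mathfrak{r}_{A^{**}}$. Combining these, for $x \in \mathfrak{r}_{A^1}$ one has ${\rm Re}\,\Phi(x) = S(a) + \lambda 1_C \geq S(a) + \lambda S^{**}(e) = S^{**}(w) \geq 0$, using $\lambda \geq 0$ and $S^{**}(e) \leq 1_C$; thus $\Phi(x) \in \mathfrak{r}_C$ and $\Phi$ is real positive.

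Finally, suppose $T$ is selfadjoint on $\Delta(A)$. Since $\Delta(A^1) = \Delta(A) \oplus \Rdb 1$ and $T$ sends selfadjoint elements of $\Delta(A)$ to selfadjoint elements, $\Phi$ is selfadjoint on $\Delta(A^1)$; Theorem~\ref{db8} then upgrades real positivity of the unital map $\Phi$ to systematic real positivity, and Lemma~\ref{rsu} gives $\|\Phi\|_r = \|\Phi(1)\| = \|1_C\| = 1$, i.e.\ $\Phi$ is real contractive. I expect the main obstacle to be part~(1): one cannot complexify, and the argument hinges on extracting norm boundedness of only the real part ${\rm Re}\circ T$ from the real contractivity hypothesis, together with a careful treatment of the unit "at infinity" $1-e$ through the central decomposition of the bidual.
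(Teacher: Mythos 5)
Your proposal is correct. Part (2) is essentially the paper's own argument: complexify, note that $\|T_c\|_{\rm cb}=\|T\|_{\rm cb}$ and that $T_c$ is RCP, apply the complex extension result, and restrict back using $(A^1)_c=(A_c)^1$ from Corollary \ref{uniqueJoaunitization}. Part (1), however, takes a genuinely different route from the paper. The paper stays at the level of the algebra itself: from ${\rm Re}(x+\lambda 1)\geq 0$ it gets ${\rm Re}(\lambda)\geq 0$ via the character annihilating $A$, forms $x_n=-\frac{n-1}{n\,{\rm Re}(\lambda)}\,x$, invokes Theorem \ref{brord} (2') to produce a contraction $a\in A$ with $({\rm Re}\,x_n)_+\leq {\rm Re}(a)\leq 1$, uses real contractivity to get $0\leq {\rm Re}\,T(a)\leq f$ and hence ${\rm Re}\,T(x_n)\leq f$, and lets $n\to\infty$. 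You instead pass to the bidual: you split $1=e\oplus(1-e)$ with $e$ the (central) identity of $A^{**}$, read off $\lambda\geq 0$ and $a+\lambda e\in {\mathfrak r}_{A^{**}}$ from the two corners, and then combine the boundedness of $S={\rm Re}\circ T$, the weak* density of ${\mathfrak r}_A$ in ${\mathfrak r}_{A^{**}}$ from Theorem \ref{frden}, and the estimate $0\leq S^{**}(e)\leq 1$ obtained from a partial cai in $\frac{1}{2}{\mathfrak F}_A$. Both arguments use the hypothesis of real contractivity only through the single consequence that ${\rm Re}\circ T$ is a contraction, and both finish identically via Theorem \ref{db8} and Lemma \ref{rsu}. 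The paper's route buys independence from bidual machinery (only Theorem \ref{brord} is needed); yours isolates cleanly the two facts doing the work ($\lambda\geq 0$ and $0\leq S^{**}(e)\leq 1$), at the cost of invoking the density theorem and of the small verifications you largely leave implicit, namely that $e$ is central in $C^*(A^1)^{**}$ with $ea=a$ for $a\in A$, that $e\neq 1$ (which does follow from $A\neq A^1$ since $A$ has codimension one in $A^1$, so $(A^1)^{**}\neq A^{**}$), and that $\Delta(A^1)=\Delta(A)+\Rdb 1$ (provable by comparing $(1-e)$-components). These all check out, so your argument stands.
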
  

\begin{proof}   (1) \ We follow the proof of \cite[Theorem 2.3]{BNjp}, with a few tweaks.
Clearly (2) follows from (1).   Let 
$\tilde{T} : A^1 \to C$ be the canonical unital extension of $T$ in (1), and write $e, f$ for the units of $A^1$ and $C$.
So $\tilde{T}(a + s e) = T(a) + s f$ for $s \in \Rdb, a \in A$.    
Suppose that $A$ is a Jordan subalgebra of some real $C^*$-algebra $D$.   Since $e \notin A$ we may assume that $e = 1_{D^1} \notin D$. 
Suppose that Re $(x + \lambda e) \geq 0$ for  $x \in A$ and  scalar $\lambda$.    We need to prove that 
Re $(T(x)  + \lambda f) \geq 0$.   This is clear if Re $(\lambda)  = 0$, so suppose the contrary.  
Now Re $(\lambda) > 0$ (by considering the character 
$\chi$ on $D^1$ that annihilates  $D$; this
is a state so that Re$(\chi(x) + \lambda) = {\rm Re}(\lambda) \geq 0$).   
 Since Re$(x + \lambda e) \geq 0$ we have $-\frac{1}{{\rm Re} (\lambda)} \, {\rm Re} (x) \leq e$.   
Let $$x_n = -   \frac{n-1}{n \, {\rm Re} (\lambda)} \, x \, , \; \; \; 
y = {\rm Re} (x_n) \leq   \frac{n-1}{n} \, e$$ and $z =y _+ 
\leq  \frac{n-1}{n} \, e$.   By Theorem \ref{brord} there exists 
a contraction $a \in A$ with $0 \leq z \leq {\rm Re} (a) \leq e$.   Now 
 Re $(a - x_n)   \geq 0$, since  Re $(x_n) = y \leq y_+ = z  \leq {\rm Re} (a)$.
Also 
 $\| {\rm Re} \, (T(a)) \| \leq 1$  since $a$ and ${\rm Re} \, T$ are contractions, and therefore  $0 \leq {\rm Re} \, (T(a)) \leq f$.  Also, Re $T(a - x_n) \geq 0$, 
so that  Re $(T(x_n)) \leq {\rm Re} \, (T(a)) \leq f$.   That is, 
$$ -  \frac{n-1}{n \, {\rm Re} (\lambda)} \, {\rm Re} \, (T(x))  \leq f.$$
Letting $n \to \infty$ we have that ${\rm Re} \, (T(x) +  \lambda f) \geq 0$ as desired.  
Hence $\tilde{T}$ is  a unital real positive  map, which is selfadjoint on $\Delta(A^1)$,   
 and thus is  real contractive by  Lemma \ref{rsu}. 
 
 (2) \ We have that $T_c : A_c \to B_c$ is completely positive and completely contractive.  Then apply 
  \cite[Proposition 2.2]{BNp}, and finally restrict to $A^1$ (since $(A^1)_c = (A_c)^1$). 
 \end{proof}
 
\begin{remark} Of course the extensions in the previous result are unique.   As in the complex case \cite{BNp,BNjp} one may apply these results to extend 
projections $P : A \to A$ to  unital projections on $A^1$.  

In the complex scalar case one gets a better result   \cite[Theorem 2.3]{BNjp}: A  real positive contractive  linear map  $T : A \to B$ extends to a unital real
 positive  contractive  linear map on  $A^1$.     We do not know if this is true in the real case, even if $T$ is systematically real
 positive.  If it were one would obtain the corollary that 
 a bounded real linear map $T : A \to B$ between approximately unital Jordan operator algebras which is selfadjoint on $\Delta(A)$, 
is  real positive and contractive if and only if $T({\mathfrak F}_A) \subset {\mathfrak F}_B$.   We do not know if this is true either, although certainly
$T({\mathfrak F}_A) \subset {\mathfrak F}_B$ implies that $T$ is real positive.  
\end{remark}


\subsection*{Acknowledgment} 
Several  results here (in particularly, many in Sections 2--4) are from the May 2020 PhD thesis of  author W.\ Tepsan \cite{WTT}.  
 Other complementary facts, alternative proofs, and additional theory may be found there,
 and there may also possibly be a   forthcoming paper on that topic.
  
  We thank M. Kalantar for several helpful discussions, which we mention in more detail
 in and around  Theorem
\ref{ijco} (see also \ref{coisre}).   We also thank \'Angel Rodr\'iguez Palacios for some
comments.

  In terms of future 
 directions, the noncommutative topology of real Jordan operator algebras in the spirit of 
\cite{BNj} 
looks like a fruitful topic that should be pursued elsewhere, as well as some other 
features of the real positive cone that have not been explored for the real case here or in 
\cite{WTT}.    There are no doubt some interesting `operator space
aspects' of the theory of real associative operator algebras that are worth developing.  
In addition, there are several open questions stated in this paper.   Finally it would be very 
worthwhile to find many more interesting examples of real Jordan operator algebras, particularly if they might be important in quantum physics.   It does not seem hard to find examples of real Jordan operator algebras beyond those already mentioned in this paper or \cite{WTT}.  For example if one looks inside the  upper triangular (real or complex) matrices ${\mathcal U}_n$ one quickly spots 
several  new examples.  E.g.\ the matrices in ${\mathcal U}_2(\Cdb)$ with $a_{22} = \overline{a_{11}}$  and $a_{12}$ real.  Or certain matrices in ${\mathcal U}_n$ that are `symmetric' with respect to the diagonal connecting $a_{n1}$ to $a_{1n}$.

\end{document}